\newtheorem{theorem}{Theorem}[section]
\newtheorem{proposition}[theorem]{Proposition}
\newtheorem{lemma}[theorem]{Lemma}
\newtheorem{corollary}[theorem]{Corollary}
\theoremstyle{definition}
\newtheorem{example}[theorem]{Example}
\newtheorem{definition}[theorem]{Definition}
\newtheorem{remark}[theorem]{Remark}
\newtheorem{problem}[theorem]{Problem}
\newcommand{\bigzero}{\mbox{\normalfont\Large\bfseries 0}}
\begin{document}

\author[P. Danchev]{Peter Danchev}
\address{Institute of Mathematics and Informatics, Bulgarian Academy of Sciences, 1113 Sofia, Bulgaria}
\email{danchev@math.bas.bg; pvdanchev@yahoo.com}
\author[A. Javan]{Arash Javan}
\address{Department of Mathematics, Tarbiat Modares University, 14115-111 Tehran Jalal AleAhmad Nasr, Iran}
\email{a.darajavan@modares.ac.ir; a.darajavan@gmail.com}
\author[O. Hasanzadeh]{Omid Hasanzadeh}
\address{Department of Mathematics, Tarbiat Modares University, 14115-111 Tehran Jalal AleAhmad Nasr, Iran}
\email{o.hasanzade@modares.ac.ir; hasanzadeomiid@gmail.com}
%\author[M. Doostalizadeh]{Mina Doostalizadeh}
%\address{Department of Mathematics, Tarbiat Modares University, 14115-111 Tehran Jalal AleAhmad Nasr, Iran}
%\email{d\_mina@modares.ac.ir;  m.doostalizadeh@gmail.com}
\author[A. Moussavi]{Ahmad Moussavi}
\address{Department of Mathematics, Tarbiat Modares University, 14115-111 Tehran Jalal AleAhmad Nasr, Iran}
\email{moussavi.a@modares.ac.ir; moussavi.a@gmail.com}

\title[C$\Delta$ rings]{Rings in which all elements are the sum of \\ a central element and an element from $\Delta (R)$}
\keywords{central element, $C\Delta$ ring, $\Delta (R)$, matrix ring}
\subjclass[2010]{16S34, 16U60}

\maketitle

%\date{.}

%\begin{document}

%\maketitle

\begin{abstract} We define and consider in-depth the so-called "{\it $C\Delta$ rings}" as those rings $R$ whose elements are a sum of an element in $C(R)$ and of an element in $\Delta(R)$. Our achieved results somewhat strengthen these recently obtained by Ma-Wang-Leroy in Czechoslovak Math. J. (2024) as well as these due to Kurtulmaz-Halicioglu-Harmanci-Chen in Bull. Belg. Math. Soc. Simon Stevin (2019).

Specifically, we succeeded to establish that exchange $C\Delta$ rings are always clean as well as that exchange CN rings are strongly clean. Likewise, we prove that, for any ring $R$, the ring of formal power series $R[[x]]$ over $R$ is $C\Delta$ if, and only if, so is $R$. And, furthermore, we show that, for any ring $R$, if the polynomial ring $R[x]$ is a $C\Delta$ ring, then $R$ satisfies the K\"othe conjecture. Some other closely related things concerning certain extensions of $C\Delta$ rings are also presented.

\end{abstract}

\section{Introduction and Basic Concepts}

In the current paper, let throughout $R$ denote an associative ring with identity element, which ring is {\it not} necessarily commutative. Typically, for such a ring $R$, the sets $U(R)$, $Nil(R)$ and $C(R)$ represent the set of invertible elements (i.e., the unit group of $R$), the set of nilpotent elements and the set of central elements in $R$, respectively. Additionally, $J(R)$ denotes the Jacobson radical of $R$. Likewise, the whole ring of \( n \times n \) matrices over \( R \) and the ring of \( n \times n \) upper triangular matrices over \( R \) are, respectively, designed by \( {\rm M}_n(R) \) and \( {\rm T}_n(R) \).\\

The key instrument of the present article unambiguously plays the set $\Delta(R)$, which was introduced by Lam in \cite [Exercise 4.24]{10} and was recently examined by Leroy-Matczuk in \cite{2}. As pointed out by the authors in \cite [Theorem 3 and 6]{2}, $\Delta(R)$ is the largest Jacobson radical's subring of $R$ that is closed with respect to multiplication by all units (as well as by quasi-invertible elements) of $R$. Also, the containment $J(R) \subseteq \Delta(R)$ is fulfilled always. Moreover, $\Delta(R)=J(T)$, where $T$ is the subring of $R$ generated by all units of $R$, and the equality $\Delta(R)=J(R)$ holds if, and only if, $\Delta(R)$ is an ideal of $R$. Explicitly, an element $a$ in a ring $R$ is from $\Delta(R)$ if $1-ua$ is a unit for all $u\in U(R)$.\\

Furthermore, a ring $R$ is said to be $CU$ if each element $a\in R$ has a decomposition $a=c+u$, where $c$ is a central element and $u$ is a unit -- to be exact, this notion was introduced by Kurtulmaz et al. in \cite{6}. They showed that, if $\mathbb F$ is a field which is {\it not} isomorphic to $\mathbb Z_2$, then $M_2(\mathbb F)$ is a $CU$ ring.

In the other vein, a ring $R$ is called $CN$ if each element $a$ of $R$ has a decomposition $a=c+n$, where $c$ is a central element and $n$ is a nilpotent element. This concept was introduced in \cite{11}. They showed that every $CN$ ring is $CU$, but the converse is definitely {\it not} true in general. They also characterized all elements in $M_n(R)$ having a $CN$ decomposition. They, moreover, proved for a division ring $R$ that, if $M_n(R)$ is a $CN$ ring, then the cardinality of $C(R)$ is strictly greater than $n$.\\

Similarly, an element $a\in R$ has a $CJ$ decomposition if $a=c+j$ for some $c\in C(R)$ and $j\in J(R)$ and a ring $R$ is called $CJ$ if every element of $R$ has a $CJ$ decomposition. This notion was introduced by Ma et al. in \cite{9}. They study the behavior of this class of rings under various ring extensions. In particular, they proved that, for any ring $R$, the subring $C(R)+J(R)$ is always a $CJ$ ring, and that if $R[x]$ is a $CJ$ ring, then $R$ satisfies the Köthe conjecture.\\

So, as a non-trivial extension of $CJ$ rings, we are motivated to introduce and investigate the new and more expanded concept of a {\it $C\Delta$ ring}, which will be the leitmotif of our study, like this: a ring $R$ is called $C\Delta$ if each element $a\in R$ has a decomposition $a=c+r$, where $c\in C(R)$ and $r\in \Delta(R)$. Apparently, all $CJ$ rings are $C\Delta$, but the opposite implication does {\it not} hold in all generality as we will illustrate below. \\

In the following, we give definitions of some other subrings of \( {\rm T}_n(R) \) in order to discuss in the sequel whether they satisfy the $C\Delta$ property:
\[
D_n(R) = \left\{ (a_{ij}) \in M_n(R) \ \middle| \ \text{all diagonal entries of } (a_{ij}) \text{ are equal} \right\};
\]
\[
V_n(R) = \left\{ \sum_{i=j}^{n} \sum_{j=1}^{n} a_j e_{(i-j+1)i} \ \middle| \ a_j \in R \right\};
\]

\[
V_n^k(R) = \left\{ \sum_{i=j}^{n} \sum_{j=1}^{k} x_j e_{(i-j+1)i} + \sum_{i=j}^{n-k} \sum_{j=1}^{n-k} a_{ij} e_{j(i+k+1)} \ \middle| \ x_j, a_{ij} \in R \right\},
\]
where \( x_{ij} \in R, a_{js} \in R, \ 1 \leq i \leq k, \ 1 \leq j \leq n - k \ \text{and} \ k + 1 \leq s \leq n \);
\[
D_n^k(R) = \left\{ \sum_{i=1}^{k} \sum_{j=k+1}^{n} a_{ij} e_{ij} + \sum_{j=k+2}^{n} b_{(k+1)j} e_{(k+1)j} + c I_n \ \middle| \ a_{ij}, b_{ij}, c \in R \right\},
\]
where \( k = [\frac{n}{2}] \), i.e., satisfies \( n = 2k \) when \( n \) is an even integer and \( n = 2k + 1 \) when \( n \) is an odd integer.

\medskip

Our principal work in what follows is planned thus: In the next section, we are concerned with the exploration of some crucial structural properties of $C\Delta$ rings arising under various constructions, as well as we will discuss certain connections between the aforementioned classes of rings (see, e.g., Corollaries~\ref{exch} and \ref{4.7}). In the subsequent third section, we are focussed on the possible extensions of $C\Delta$ rings and again some transversal between the existing classes of rings is obtained (see, e.g., Theorems~\ref{4.10}, \ref{4.11} and \ref{3.9}). In the final fourth section, we pose two queries of some interest and importance which, hopefully, could stimulate a further research on this subject (see Problems~\ref{p1} and \ref{p2}).

\section{Basic properties of $C\Delta$ rings}

In this section, we introduce the concept of a $C\Delta$ ring and investigate its elementary properties. We also give relations between $C\Delta$ rings and some other related rings. For this purpose, we need a series of preliminary technicalities.

\begin{lemma}\label{2.2}
For any ring \(R\), the following equality is true:
	\[
	U(R) + \Delta(R) = U(R).
	\]
\end{lemma}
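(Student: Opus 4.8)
The plan is to prove the equality $U(R) + \Delta(R) = U(R)$ by establishing both inclusions, where the non-trivial direction is $U(R) + \Delta(R) \subseteq U(R)$ and the reverse inclusion follows at once since $0 \in \Delta(R)$.

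For the forward inclusion, take an arbitrary $u \in U(R)$ and an arbitrary $r \in \Delta(R)$; I want to show $u + r \in U(R)$. The key is to factor out the unit: write $u + r = u(1 + u^{-1}r)$. Now I would like to invoke the defining property of $\Delta(R)$ recalled in the introduction, namely that $a \in \Delta(R)$ precisely when $1 - va$ is a unit for every $v \in U(R)$. Since $u^{-1} \in U(R)$ and $r \in \Delta(R)$, applying this with $v = -u^{-1}$ (which is again a unit) gives that $1 - (-u^{-1})r = 1 + u^{-1}r$ is a unit. Hence $u + r$ is a product of two units, so $u + r \in U(R)$. This proves $U(R) + \Delta(R) \subseteq U(R)$, and combined with the trivial reverse inclusion we obtain the claimed equality.

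I do not anticipate a genuine obstacle here; the statement is essentially a restatement of one of the defining/characterizing properties of $\Delta(R)$ as the largest subring of $J$-type closed under multiplication by units. The only point requiring a little care is making sure the characterization ``$a \in \Delta(R) \iff 1 - va \in U(R)$ for all $v \in U(R)$'' is applied in the correct form — one must note that $\Delta(R)$ being closed under negation (it is a subring) means $-u^{-1}r \in \Delta(R)$ as well, so one could equally phrase the argument as: $1 + u^{-1}r = 1 - (-u^{-1}r)$ with $-u^{-1}r \in \Delta(R)$, hence a unit by taking $v = 1$. Either bookkeeping works. If one prefers not to quote the characterization directly, an alternative is to use the description $\Delta(R) = J(T)$ with $T$ the subring generated by the units: then $u, u^{-1} \in T$, $r \in J(T)$, so $u + r \in T$ and $u^{-1}(u+r) = 1 + u^{-1}r \in 1 + J(T) \subseteq U(T) \subseteq U(R)$, giving the result. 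I would present the first, shorter argument as the main line.
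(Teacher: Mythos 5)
Your proposal is correct and follows essentially the same route as the paper: the reverse inclusion is trivial, and the forward inclusion comes from the factorization $u+r=u(1+u^{-1}r)$, with the paper leaving the verification that $1+u^{-1}r\in U(R)$ as an easy check that you carry out explicitly via the defining property of $\Delta(R)$.
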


\begin{proof}
We know that \(U(R) \subseteq U(R) + J(R) \subseteq U(R) + \Delta(R)\). To treat the reciprocal, given \(x \in U(R) + \Delta(R)\), we may write \(x = u+r\), where \(u \in U(R)\) and \(r \in \Delta(R)\). One easily checks that \(x = u + r = u(1+u^{-1}r) \in U(R) \), as required.
\end{proof}

\begin{lemma}\label{2.3}
Let \(R\) be a ring. For \(n \geq 2\),
	\[ \Delta(M_n(R)) = J(M_n(R)) = M_n(J(R)).
	\]
\end{lemma}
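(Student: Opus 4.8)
The plan is to prove the two equalities in Lemma~\ref{2.3} separately, exploiting the well-known description of the Jacobson radical of a matrix ring together with the fact (recalled in the introduction) that $\Delta(R)=J(R)$ exactly when $\Delta(R)$ is an ideal. Since $J(M_n(R))=M_n(J(R))$ is a classical result (the radical of a full matrix ring consists precisely of the matrices with all entries in $J(R)$), the genuine content is the leftmost equality $\Delta(M_n(R))=J(M_n(R))$; equivalently, that $\Delta(M_n(R))$ is an ideal of $M_n(R)$, or directly that $\Delta(M_n(R))\subseteq M_n(J(R))$, the reverse inclusion $J(M_n(R))\subseteq\Delta(M_n(R))$ being automatic.

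First I would fix $n\ge 2$ and take an arbitrary $A=(a_{ij})\in\Delta(M_n(R))$; the goal is to show every entry $a_{ij}$ lies in $J(R)$. The key tool is that $\Delta(M_n(R))$ is closed under left and right multiplication by units of $M_n(R)$, and $M_n(R)$ has a very rich supply of units even when $R$ does not: all permutation matrices, and all elementary transvections $I+re_{k\ell}$ for $r\in R$ and $k\ne\ell$, are units. Conjugating $A$ by the permutation matrix that swaps the roles of indices lets me move any chosen entry into, say, the $(1,1)$ position, so it suffices to show $a_{11}\in J(R)$ for every $A\in\Delta(M_n(R))$. To see this I would use the characterization $a_{11}\in J(R)$ iff $1-a_{11}r$ is left invertible in $R$ for all $r\in R$ (or the two-sided version); applying the hypothesis that $I-UA\in U(M_n(R))$ for suitable units $U$ built from $re_{11}$ and the identity on the other diagonal slots, the invertibility of the resulting block (upper-triangular-like) matrix forces $1-a_{11}r\in U(R)$. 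Actually the cleanest route: for $A\in\Delta(M_n(R))$ and any $r\in R$, the matrix $I - (r e_{11})A$ must be a unit; computing this matrix, its $(1,1)$ entry is $1-r a_{11}$ and rows $2,\dots,n$ are unchanged, so it is a unit iff its $(1,1)$ entry is a unit, giving $1-ra_{11}\in U(R)$ for all $r$, i.e.\ $a_{11}\in\Delta(R)$. This only shows entries lie in $\Delta(R)$, so I must push further.

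To upgrade from $a_{ij}\in\Delta(R)$ to $a_{ij}\in J(R)$, I would exploit the \emph{off-diagonal} structure that a genuine matrix provides. Multiplying $A$ on the left by the unit $I+e_{12}$ and on the right by a permutation, one can transfer $a_{11}+a_{21}$ or combinations of entries from different rows into a single position; since $\Delta(M_n(R))$ is also closed under addition (it is a subring), and the individual conjugates all have their $(1,1)$-entries in $\Delta(R)$, one deduces that various $R$-linear combinations of the $a_{ij}$, in particular sums like $a_{11}+a_{12}$ obtained by right-multiplying by $I+e_{21}$, again lie in $\Delta(R)$. The decisive observation is that $e_{12}A$ and $Ae_{12}$ both lie in $\Delta(M_n(R))$, and one can arrange a unit $U$ with $UAU^{-1}$ having a $2\times 2$ block in the top-left corner equal to $\begin{pmatrix} a_{11} & a_{12}\\ a_{21} & a_{22}\end{pmatrix}$ up to the rest; then applying the $(1,1)$-entry argument to products $(re_{11}+se_{12})A$ shows $1 - (ra_{11}+sa_{21})\in U(R)$ and $1-(ra_{12}+sa_{22})$-type expressions sit inside units, and running $r,s$ over all of $R$ (including $s$ a non-unit, which is legitimate because $re_{11}+se_{12}$ need not be a unit — here I must instead use that $I+(re_{11}+se_{12})$ \emph{is} a unit) pins the entries down into $J(R)$ via the standard criterion $a\in J(R)\iff a\in\Delta(R)\text{ and }Ra\subseteq\Delta(R)$, equivalently $\Delta(R)$ is an ideal on the relevant entries. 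The main obstacle is precisely this last step: one must be careful that the elementary matrices used to isolate entries are actually \emph{units} of $M_n(R)$ (transvections $I+re_{k\ell}$ with $k\ne\ell$ always are, regardless of $r$), and to convert "every entry is in $\Delta(R)$" into "every entry is in $J(R)$" one genuinely needs to use that $\Delta$ is closed under multiplication by units from \emph{both} sides together with the off-diagonal unit $I+e_{21}$, which produces $a_{11}+ a_{12}\in\Delta(R)$ and $a_{12}\in\Delta(R)$, hence stability of $\Delta(R)$ under left multiplication by arbitrary ring elements — forcing $\Delta(R)$ restricted to these entries to be an ideal, whence equal to $J(R)$.

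Finally, once $\Delta(M_n(R))\subseteq M_n(J(R))=J(M_n(R))$ is established, the reverse inclusion $J(M_n(R))\subseteq\Delta(M_n(R))$ holds for every ring, so all three sets coincide and the lemma follows. I expect the write-up to be short: set up the elementary/permutation units, run the $(1,1)$-entry computation to get entries in $\Delta(R)$, then use one off-diagonal transvection to collapse $\Delta(R)$ into an ideal and conclude $\Delta(R)=J(R)$ on entries, i.e.\ $\Delta(M_n(R))=M_n(J(R))$.
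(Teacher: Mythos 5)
Your overall strategy---exploiting the abundance of units in $M_n(R)$ (permutations and transvections $I+re_{k\ell}$, $k\neq\ell$) to push $\Delta(M_n(R))$ down into $M_n(J(R))$---is viable and genuinely different from the paper, which simply observes that $M_n(R)$ is generated by its units when $n\ge 2$ and invokes Leroy--Matczuk (\cite[Corollary 4]{2}), i.e.\ $\Delta(S)=J(T)$ with $T$ the unit-generated subring, so $T=S$ forces $\Delta=J$, together with the classical $J(M_n(R))=M_n(J(R))$. However, your central computation has a genuine gap: from $A\in\Delta(M_n(R))$ you conclude that $I-(re_{11})A$ is a unit, but $re_{11}$ is \emph{not} a unit of $M_n(R)$, and the definition of $\Delta$ only controls $I-UA$ for $U\in U(M_n(R))$; allowing arbitrary multipliers is precisely the difference between $\Delta$ and $J$, so it cannot be assumed. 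Your fallback, that $I+(re_{11}+se_{12})$ is a unit, is also false in general: that matrix is upper triangular with $(1,1)$ entry $1+r$, hence a unit iff $1+r\in U(R)$. Note moreover that if your claim were available it would give $1-ra_{11}\in U(R)$ for \emph{all} $r\in R$, i.e.\ $a_{11}\in J(R)$ outright, not merely $a_{11}\in\Delta(R)$; so the elaborate second ``upgrade'' stage of your write-up rests on this confusion rather than repairing it.

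The argument can be fixed using only honest units and the subring property of $\Delta$. For $k\neq\ell$ the transvection $I+re_{k\ell}$ is a unit, and $\Delta$ is closed under left multiplication by units, so $(I+re_{k\ell})A\in\Delta(M_n(R))$ and hence $re_{k\ell}A=(I+re_{k\ell})A-A\in\Delta(M_n(R))$. For diagonal positions write $re_{kk}=(re_{k\ell})e_{\ell k}$ with $\ell\neq k$ (this is where $n\ge 2$ enters), so $re_{kk}A=re_{k\ell}\bigl(e_{\ell k}A\bigr)\in\Delta(M_n(R))$ by applying the off-diagonal step twice. Summing over entries, $BA\in\Delta(M_n(R))$ for every $B\in M_n(R)$, and symmetrically $AB\in\Delta(M_n(R))$; thus $\Delta(M_n(R))$ is an ideal, and by the fact recalled in the introduction ($\Delta(R)=J(R)$ iff $\Delta(R)$ is an ideal) we get $\Delta(M_n(R))=J(M_n(R))=M_n(J(R))$. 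With that repair your element-wise route becomes a self-contained alternative to the paper's citation-based proof.
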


\begin{proof}
Knowing that, for all \(n \geq 2\), $M_n(R)$ is generated by its units, we apply \cite [Corollary 4]{2} to get that $\Delta(M_n(R)) = J(M_n(R))$, as stated.	
\end{proof}

\begin{lemma}\label{2.7}
Let \( R \) be a ring. If \( I \subseteq J(R) \) and \( \Delta(R/I) = 0 \), then \( \Delta(R)=I = J(R)\).
\end{lemma}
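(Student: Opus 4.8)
The plan is to exploit the two basic facts already recorded in the excerpt: that $J(R)\subseteq\Delta(R)$ always, and that $\Delta$ behaves reasonably well with respect to quotients by radical ideals. Since $I\subseteq J(R)$, we first observe that $J(R/I)=J(R)/I$; this is the standard correspondence for Jacobson radicals modulo an ideal contained in $J(R)$. Then the hypothesis $\Delta(R/I)=0$ together with $J(R/I)\subseteq\Delta(R/I)$ forces $J(R/I)=0$, hence $J(R)/I=0$, i.e. $J(R)=I$. So the equality $I=J(R)$ is essentially immediate; the real content is the leftmost equality $\Delta(R)=I$.

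To get $\Delta(R)=I=J(R)$, it suffices to show $\Delta(R)\subseteq J(R)$, since the reverse containment is automatic. For this I would take $a\in\Delta(R)$ and consider its image $\bar a=a+I$ in $R/I$. The key step is to argue that $\bar a\in\Delta(R/I)$: indeed, for any unit $\bar u\in U(R/I)$, since $I\subseteq J(R)$ every unit of $R/I$ lifts to a unit $u\in U(R)$, and then $1-ua\in U(R)$ because $a\in\Delta(R)$, so $\overline{1-ua}=1-\bar u\bar a$ is a unit in $R/I$; this is exactly the elementwise criterion for membership in $\Delta(R/I)$ quoted in the introduction. Hence $\bar a\in\Delta(R/I)=0$, which means $a\in I\subseteq J(R)$. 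Therefore $\Delta(R)\subseteq J(R)\subseteq\Delta(R)$, giving $\Delta(R)=J(R)=I$.

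The one point that needs a little care — and which I expect to be the main (though modest) obstacle — is the claim that units of $R/I$ lift to units of $R$ when $I\subseteq J(R)$. This is a well-known fact (if $\bar u\bar v=\bar 1$ then $uv=1-c$ with $c\in I\subseteq J(R)$, so $uv$ is a unit, and similarly $vu$, whence $u$ is a unit), but it is the hinge that makes the preimage argument work, so I would state it explicitly. With that in hand, the containment $\Delta(R)\subseteq I$ is clean, and combined with $I\subseteq J(R)\subseteq\Delta(R)$ the chain of equalities $\Delta(R)=I=J(R)$ follows. Alternatively, one could phrase the whole thing via the general principle that $\Delta(R)/I\subseteq\Delta(R/I)$ whenever $I\subseteq J(R)$ — applying it here gives $\Delta(R)/I=0$ directly — but I would prefer the hands-on version since it keeps the proof self-contained relative to the material already developed in the excerpt.
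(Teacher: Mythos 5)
Your proof is correct and follows essentially the same route as the paper: both reduce everything to showing $\Delta(R)\subseteq I$ by passing to $R/I$, using the elementwise criterion for $\Delta$ together with the fact that units of $R/I$ lift to units of $R$ when $I\subseteq J(R)$. The only difference is cosmetic — you argue directly rather than by contradiction and spell out the unit-lifting step that the paper leaves implicit.
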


\begin{proof}
Since \( J(R) \subseteq \Delta(R) \), it suffices to show that \( \Delta(R) \subseteq I \). Choose $x\in \Delta(R)$ and $x\notin I$. So, $x+I$ is a non-zero element of $R/I$ and hence $x+I\notin \Delta(R/I)$. Therefore, by definition of $\Delta$, there exists $y\in R$ such that $(1+xy)+I$ is not unit in $R/I$, but this is a contradiction. Then, \( \Delta(R) \subseteq I \) whence \( \Delta(R)=I = J(R)\), as asserted.
\end{proof}

Two automatic consequences hold:

\begin{corollary}\label{2.8}
For any ring \( R \), \( \Delta(R) = J(R) \) if, and only if, \( \Delta(R/J(R))= (0) \).
\end{corollary}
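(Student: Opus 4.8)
The plan is to deduce both implications quickly, the backward one being literally an instance of Lemma~\ref{2.7} and the forward one resting only on the standard fact that units lift along the canonical surjection $R\to R/J(R)$.

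First I would handle the ``if'' direction. Assuming $\Delta(R/J(R))=(0)$, set $I=J(R)$. Then trivially $I\subseteq J(R)$ and $\Delta(R/I)=\Delta(R/J(R))=(0)$, so Lemma~\ref{2.7} applies verbatim and yields $\Delta(R)=I=J(R)$. This requires no extra work beyond quoting the lemma with the obvious choice of ideal.

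For the ``only if'' direction, suppose $\Delta(R)=J(R)$; I must show $\Delta(R/J(R))=(0)$. Take any $\bar x=x+J(R)\in\Delta(R/J(R))$ and any $u\in U(R)$. Since $\bar u=u+J(R)\in U(R/J(R))$, the definition of $\Delta$ applied in $R/J(R)$ gives that $\overline{1-ux}=1-\bar u\,\bar x$ is a unit of $R/J(R)$. Now invoke the standard property of the Jacobson radical: an element of $R$ is a unit precisely when its image in $R/J(R)$ is a unit. Hence $1-ux\in U(R)$. As $u\in U(R)$ was arbitrary, $x\in\Delta(R)=J(R)$, so $\bar x=0$. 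Therefore $\Delta(R/J(R))=(0)$, completing the equivalence.

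I do not expect any genuine obstacle here: the only non-formal ingredient is the lifting of units modulo $J(R)$, which is classical, and the (harmless) point that the characterization ``$1-ua\in U(R)$ for all $u\in U(R)$'' of membership in $\Delta(R)$ is insensitive to replacing $u$ by $-u$, $u^{-1}$, etc., so the computation $\overline{1-ux}=1-\bar u\,\bar x$ is exactly the condition furnished by $\bar x\in\Delta(R/J(R))$. If one prefers to present the corollary as a single clean statement, one can even phrase the ``only if'' part as: $\Delta$ of a semiprimitive ring (here $R/J(R)$) is zero whenever $\Delta(R)=J(R)$, but the argument above is self-contained and is all that is needed.
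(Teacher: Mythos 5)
Your proof is correct and matches the paper's intent: the paper states this corollary as an automatic consequence of Lemma~\ref{2.7}, which is exactly your ``if'' direction with $I=J(R)$, and your ``only if'' direction is the routine verification (images of units of $R$ are units of $R/J(R)$, plus lifting of units modulo $J(R)$) that the paper leaves unwritten. No gaps.
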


\begin{corollary}\label{2.9}
Let \( R \) be a ring and \( I \subseteq J(R) \). Then, $ \dfrac {I+\Delta(R)}{I} = \Delta(R/I)$.
\end{corollary}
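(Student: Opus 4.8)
The plan is to deduce this directly from Lemma~\ref{2.7}—or rather, from its proof—applied to the quotient ring $\bar R := R/I$, by showing the general identity $\Delta(R/I) = (I+\Delta(R))/I$ whenever $I \subseteq J(R)$. First I would record the two standard facts that make this work: since $I \subseteq J(R)$ we have $J(R/I) = J(R)/I$, and moreover the natural surjection $\pi\colon R \to R/I$ carries units to units and reflects units, i.e. $u + I \in U(R/I)$ iff $u \in U(R)$ (the latter because $I \subseteq J(R)$). These will be invoked freely.

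For the inclusion $(I+\Delta(R))/I \subseteq \Delta(R/I)$, take $r \in \Delta(R)$; I want $r + I \in \Delta(R/I)$, i.e. that $1 + (r+I)(\bar u) \in U(R/I)$ for every $\bar u = u + I \in U(R/I)$. Lifting $\bar u$ to a unit $u \in U(R)$ (possible by the remark above), we have $1 + ru \in U(R)$ since $r \in \Delta(R)$, hence its image $1 + (r+I)(u+I)$ is a unit in $R/I$; as $\bar u$ ranged over all of $U(R/I)$, this gives $r + I \in \Delta(R/I)$. Passing to the quotient, $(I+\Delta(R))/I \subseteq \Delta(R/I)$.

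For the reverse inclusion, suppose $x + I \in \Delta(R/I)$; I must produce $x' \in \Delta(R)$ with $x - x' \in I$, and in fact $x$ itself will work, i.e. $x \in \Delta(R)$. Given any $u \in U(R)$, the image $u + I$ is a unit of $R/I$, so $1 + (x+I)(u+I) = (1+xu) + I \in U(R/I)$; since $I \subseteq J(R)$, an element of $R$ whose image in $R/I$ is a unit is itself a unit, so $1 + xu \in U(R)$. As $u$ was an arbitrary unit, $x \in \Delta(R)$, hence $x + I \in (I+\Delta(R))/I$. Combining the two inclusions yields $\Delta(R/I) = (I+\Delta(R))/I$.

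I do not anticipate a genuine obstacle here; the whole argument is a transcription of the definition of $\Delta$ through the quotient map, and the only point requiring the hypothesis $I \subseteq J(R)$ (rather than merely $I$ an ideal) is the two-way correspondence between units of $R$ and units of $R/I$. If one prefers an even shorter route, one can instead cite Corollary~\ref{2.8} together with the observation that $\Delta$ of a ring modulo an ideal inside its Jacobson radical is unchanged in the appropriate sense, but writing the two inclusions out explicitly as above is cleaner and self-contained.
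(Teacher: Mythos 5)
Your proof is correct, and it is essentially the argument the paper has in mind: the paper states Corollary~\ref{2.9} as an ``automatic consequence'' of the unit-lifting mechanism already used in the proof of Lemma~\ref{2.7} (units of $R/I$ correspond exactly to units of $R$ when $I \subseteq J(R)$), which is precisely the two-inclusion argument you wrote out. Your write-up simply supplies the details the paper omits, so there is nothing to correct.
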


Let us recollect once again our key machinery explicitly in a more expanded form.

\begin{definition}
Let $R$ be a ring. An element $a\in R$ is called $C\Delta$ or just it has a {\it $C\Delta$-decomposition} if $a=c+r$, where $c\in C(R)$ and $r\in \Delta(R)$. If every element of $R$ has such a $C\Delta$-decomposition, then $R$ is called a {\it $C\Delta$ ring}.	
\end{definition}

The following appeared to be obviously true:

\begin{example}\label{2.10}
The following are fulfilled:	
\begin{enumerate}
\item
Every commutative ring is $C\Delta$.
\item
Every radical ring $R$, i.e., $R = J(R)$, is $C\Delta$.
\item
Every $\Delta$-ring $R$, i.e., $R = \Delta(R)$, is $C\Delta$.
\item
Every CJ ring is $C\Delta$.
\end{enumerate}
\end{example}

Recall that a ring $R$ is said to be {\it Dedekind-finite} if $ab=1$ implies $ba=1$ for any $a, b\in R$. In other words, all one-sided inverses in the ring are two-sided. Also, a ring $R$ is called {\it reduced} if it contains no non-zero nilpotent elements.

\medskip

We now proceed by proving the following statement.

\begin{proposition}\label{4.3}
Let $R$ be a $C\Delta$ ring. Then, the following are valid:	
\begin{enumerate}
\item
For any ideal $I\subseteq J(R)$, $R/I$ is $C\Delta$ (in particular, $R/J(R)$ is $C\Delta$).
\item
For any $a, b\in R$, $ab-ba\in \Delta(R)$.
\item
$R$ is Dedekind-finite.
\item
If $a^2\in \Delta(R)$, then $a\in \Delta(R)$.
\item
$R$ is a $CU$ ring.
\item
Always $Nil(R)\subseteq J(R)$ and $Nil(R)\subseteq \Delta(R)$.
\item
If $a^2\in J(R)$, then $a\in J(R)$ (in particular, $R/J(R)$ is reduced).
\item
For any $e\in Id(R)$, the corner subring $eRe$ is a $C\Delta$ ring.
\end{enumerate}
\end{proposition}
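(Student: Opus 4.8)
The plan is to dispatch the eight items in the order listed, using earlier ones to prove later ones. For (i), I would lift $\bar a\in R/I$ to a $C\Delta$-decomposition $a=c+r$; then $\bar c$ is central (the image of a central element under the surjection $R\to R/I$) and $\bar r\in (I+\Delta(R))/I=\Delta(R/I)$ by Corollary~\ref{2.9}, so $\bar a=\bar c+\bar r$ works, the case $I=J(R)$ being the parenthetical claim. For (ii), writing $a=c_1+r_1$ and $b=c_2+r_2$ with $c_i\in C(R)$, $r_i\in\Delta(R)$, and using that central elements commute with everything, all mixed terms cancel, leaving $ab-ba=r_1r_2-r_2r_1\in\Delta(R)$ since $\Delta(R)$ is a subring.

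Next I would derive (iii), (v), (iv). For (iii): if $ab=1$ then $ba$, hence $1-ba$, is idempotent; by (ii) $1-ba=ab-ba\in\Delta(R)$, so $ba=1-(1-ba)\in U(R)$ (the defining property of $\Delta(R)$ applied with the unit $1$), and an idempotent which is a unit equals $1$. For (v): rearrange $a=c+r$ as $a=(c-1)+(1+r)$, noting $c-1\in C(R)$ and $1+r\in U(R)$ by Lemma~\ref{2.2}. Item (iv) is the technical core. I would use that $a\in\Delta(R)$ is equivalent to $v-a\in U(R)$ for every $v\in U(R)$, and for a fixed such $v$ expand
\[
(v-a)(v+a)=(v^2-a^2)+(va-av),\qquad (v+a)(v-a)=(v^2-a^2)-(va-av).
\]
Here $v^2-a^2\in U(R)+\Delta(R)=U(R)$ by Lemma~\ref{2.2} (using $a^2\in\Delta(R)$), and $va-av\in\Delta(R)$ by (ii), so Lemma~\ref{2.2} again puts both products in $U(R)$; then $v-a$ has a left and a right inverse and so is a unit, giving $a\in\Delta(R)$.

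Items (vi) and (vii) should then cascade. For $Nil(R)\subseteq\Delta(R)$: if $a^n=0$, pick $k$ with $2^k\geq n$ and apply (iv) repeatedly, starting from $a^{2^k}=0\in\Delta(R)$, to get $a^{2^{k-1}},\dots,a^{2},a\in\Delta(R)$. For $Nil(R)\subseteq J(R)$: given nilpotent $a$ and arbitrary $x=c+r\in R$, the element $ca$ is nilpotent (as $c$ is central), so $ca\in\Delta(R)$ by the previous step, while $ra\in\Delta(R)$; hence $xa=ca+ra\in\Delta(R)$ and $1-xa\in U(R)$ for every $x$, so $a\in J(R)$. For (vii): by (i), $R/J(R)$ is $C\Delta$, so (vi) applied to it gives $Nil(R/J(R))\subseteq J(R/J(R))=0$; thus $R/J(R)$ is reduced, and $a^2\in J(R)$ forces $\bar a^2=0$ in $R/J(R)$, whence $\bar a=0$, i.e. $a\in J(R)$.

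Finally (viii): for $exe\in eRe$, take a $C\Delta$-decomposition $exe=c+r$ in $R$ and set $r=exe-c\in\Delta(R)$, so that $exe=ce+ere$ in $eRe$ with $ce\in C(eRe)$ (since $c$ is central); it remains to see $ere\in\Delta(eRe)$. Because $r=exe-c$ with $exe\in eRe$ and $c$ central, the off-diagonal Peirce components $er(1-e)$ and $(1-e)re$ vanish, so for $w\in U(eRe)$ the element $r+(w+(1-e))\in U(R)$ (using $w+(1-e)\in U(R)$ and $r\in\Delta(R)$) is block-diagonal, equal to $(ere+w)+((1-e)r(1-e)+(1-e))$; as a block-diagonal unit of $R$ is invertible blockwise, $ere+w\in U(eRe)$, and letting $w$ range over $U(eRe)$ gives $ere\in\Delta(eRe)$, so $eRe$ is $C\Delta$. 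I expect (iv) to be the main obstacle: finding the factorizations $(v\mp a)(v\pm a)=(v^2-a^2)\mp(va-av)$ that turn $a^2\in\Delta(R)$, together with the near-commutativity supplied by (ii), into genuine invertibility is the one step that is not routine; once (iv) is available (vi) and (vii) are essentially formal, and the only other place demanding attention is the Peirce-decomposition bookkeeping in (viii).
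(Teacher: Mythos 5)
Your proposal is correct, and everywhere except item (iv) it runs essentially parallel to the paper; the interesting divergence is precisely the step you flagged as the technical core. For (iv) the paper works with the decomposition $a=c+d$ itself: it first shows $1+a\in U(R)$, deduces $c+1\in U(R)$, hence $(c+1)d\in\Delta(R)$ and then $cd,\ c^2=a^2-2cd-d^2\in\Delta(R)$, and finally gets $c\in\Delta(R)$ from $(1-uc)(1+uc)=1-u^2c^2$. You never decompose $a$ at all: you combine the commutator fact (ii) with the factorizations $(v\mp a)(v\pm a)=(v^2-a^2)\mp(va-av)$ and Lemma~\ref{2.2} to conclude that $v-a$ is both left and right invertible for every $v\in U(R)$, hence $a\in\Delta(R)$ via the characterization $\Delta(R)=\{x: x+U(R)\subseteq U(R)\}$; this is shorter and makes visible that the $C\Delta$ hypothesis enters only through (ii). The order of (vi)--(vii) is also reversed: the paper proves (vii) directly (again from (iv), computing with $1-ca$ and $1-ra$) and dismisses (vi) as clear, whereas you prove (vi) first -- for $Nil(R)\subseteq J(R)$ you write $xa=ca+ra$ with $ca$ nilpotent hence in $\Delta(R)$, and $ra\in\Delta(R)$; state explicitly that the latter holds because $a\in Nil(R)\subseteq\Delta(R)$ and $\Delta(R)$ is multiplicatively closed, since $\Delta(R)$ is not an ideal in general -- and then obtain (vii) by passing to the $C\Delta$ ring $R/J(R)$, which is arguably the cleaner logical flow. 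Items (i), (ii), (iii), (v), (viii) match the paper up to bookkeeping: in (viii) you argue via Peirce components and blockwise invertibility of block-diagonal units where the paper computes $eve=e-ued$ directly, and in (iii) your remark that an invertible idempotent equals $1$ completes a step the paper leaves implicit.
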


\begin{proof}
\begin{enumerate}
\item
It is straightforward and, therefore, we omit the details.
\item
Suppose $a=c+d$ and $b=c^{\prime}+d^{\prime}$, where $c, c^{\prime} \in C(R)$ and $d, d^{\prime}\in \Delta(R)$. Thus, we have $ab-ba=dd^{\prime}-d^{\prime}d\in \Delta(R)$.
\item
Let $a, b\in R$. So, $ab-ba\in \Delta(R)$ by (ii). If $ab=1$, then $1-ba\in \Delta(R)$. Hence, $ba\in 1-\Delta(R)\subseteq U(R)$.	
\item
Suppose $a=c+d$, where $c\in C(R)$ and $d\in \Delta(R)$. Thus, $a^2=c^2+2cd+d^2$. On the other side, since $a^2\in \Delta(R)$, it must be that $(1-a)(1+a)=1-a^2\in U(R)$ and hence $1+a\in U(R)$. So, $$(1+a)-d=c+1\in U(R)+\Delta(R)\subseteq U(R).$$ Therefore, $(c+1)d\in \Delta(R)$ whence $cd+d\in \Delta(R)$. But, since $\Delta(R)$ is a subring of $R$, we derive $cd\in \Delta(R)$ and so $c^2=a^2-2cd-d^2 \in \Delta(R)$. Now, we show that $c\in \Delta(R)$. Given $u\in U(R)$, we may write $(1-uc)(1+uc)=1-u^2c^2\in U(R)$ yielding $1-uc\in U(R)$ and $c\in \Delta(R)$. Consequently, $a=c+d\in \Delta(R)$.
\item
Choose $a\in R$. Then, $a-1=c+d$, where $c\in C(R)$ and $d\in \Delta(R)$. So, $a=c+(1+d)$, where $1+d\in 1+\Delta(R)\subseteq U(R)$. Hence, $R$ is $CU$.
\item
It is pretty clear, so we leave the arguments.
\item
Using (iv), we have $a\in \Delta(R)$. Take $r\in R$ and write $r=c+d$, where $c\in C(R)$ and $d\in \Delta(R)$. So, we obtain $(1-ra)=1-(c+d)a=1-ca-da$. Now, we show that $1-ca\in U(R)$. In fact, $(1-ca)(1+ca)=1-c^2a^2\in U(R)$ (note that $a^2\in J(R)$) and hence $1-ca\in U(R)$. Therefore, $$1-ra=(1-ca)-da\in U(R)+\Delta(R)\subseteq U(R).$$ This shows that $a\in J(R)$.
\item
Choosing $a\in eRe$, we write $a=c+d$, where $c\in C(R)$ and $d\in \Delta(R)$. Since $a\in eRe$, we deduce $a=ec+ed=ce+de$ forcing that $ed=de$. Evidently, $ec\in C(eRe)$. Now, we prove that $ed=de\in \Delta(eRe)$. For this purpose, assume that $u\in U(eRe)$, so $u^{\prime}=u+(1-e)\in U(R)$ and, moreover, $v=1-u^{\prime}d\in U(R)$. But, $eve=ev=ve=e-ued$ and, also, $(eve)(ev^{-1}e)=e$. Therefore, $ev=e-ued\in U(R)$ and hence $ed=de\in \Delta(eRe)$.
\end{enumerate}
\end{proof}

Recall that a ring $R$ is {\it exchange} if, for each $a\in R$, there exists $e^2=e\in aR$ such that $(1-e)\in (1-a)R$ (see \cite{nic}).

\begin{corollary}\label{exch}
Let \( R \) be an exchange $C\Delta$ ring. Then, $R$ is clean.
\end{corollary}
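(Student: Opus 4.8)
The plan is to leverage the known fact (essentially Nicholson's characterization) that a ring $R$ is clean if and only if $R/J(R)$ is clean and idempotents lift modulo $J(R)$, together with the observation that exchange rings always have idempotents lifting modulo the Jacobson radical. So the strategy reduces to showing that $R/J(R)$ is clean.

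First I would pass to $\bar R = R/J(R)$. By Proposition~\ref{4.3}(i), $\bar R$ is again a $C\Delta$ ring, and by Proposition~\ref{4.3}(vii) it is reduced (no nonzero nilpotents); moreover, by Corollary~\ref{2.8} applied to $R$, we have $\Delta(\bar R) = J(\bar R) = 0$. Since $R$ is exchange, $\bar R$ is exchange as well (the exchange property passes to quotients). The key point is then that in $\bar R$, being $C\Delta$ with $\Delta(\bar R)=0$ forces every element to be central: if $\bar a = \bar c + \bar r$ with $\bar c$ central and $\bar r \in \Delta(\bar R) = 0$, then $\bar a = \bar c \in C(\bar R)$. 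Hence $\bar R$ is commutative. A commutative exchange ring is clean (commutative exchange rings are precisely the commutative clean rings, or one argues directly: a commutative exchange ring is von Neumann regular modulo its radical and hence clean). Therefore $\bar R$ is clean.

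Now I would finish by lifting: since $R$ is exchange, idempotents lift modulo $J(R)$ (this is standard for exchange rings). Combining "$R/J(R)$ clean" with "idempotents lift modulo $J(R)$" gives that $R$ is clean, via the standard lemma that a ring $R$ is clean provided $R/J(R)$ is clean and idempotents lift modulo $J(R)$. This completes the argument.

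The main obstacle — really the only non-formal step — is the reduction showing $R/J(R)$ is commutative and the invocation that a commutative (exchange) ring is clean; everything else is bookkeeping with quotients and the cited lifting property of exchange rings. One should be slightly careful that the lemma "$R/J(R)$ clean $+$ idempotents lift mod $J(R)$ $\Rightarrow$ $R$ clean" is quoted correctly, and that the exchange hypothesis is genuinely used twice (once to get $\bar R$ exchange, hence clean, and once to lift idempotents). If a self-contained route is preferred, one can instead argue directly that an exchange ring whose quotient by $J(R)$ is commutative von Neumann regular is clean, but citing the standard results keeps the proof short.
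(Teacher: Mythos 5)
Your overall skeleton (pass to $\bar R = R/J(R)$, show it is clean, then lift idempotents to conclude $R$ is clean) coincides with the paper's, and your final lifting step is fine: since $R$ is exchange, idempotents lift modulo $J(R)$, and ``$\bar R$ clean plus idempotents lift mod $J(R)$'' does give $R$ clean. The genuine gap is in how you make $\bar R$ clean. You assert that ``by Corollary~\ref{2.8} applied to $R$'' one gets $\Delta(\bar R)=J(\bar R)=0$. But Corollary~\ref{2.8} is a biconditional: $\Delta(R)=J(R)$ if and only if $\Delta(R/J(R))=0$. To extract $\Delta(\bar R)=0$ from it you would first need $\Delta(R)=J(R)$, which is not among your hypotheses: for a general $C\Delta$ ring $\Delta(R)$ need not equal $J(R)$ (equality holds exactly when $\Delta(R)$ is an ideal, and the whole point of the paper is that $C\Delta$ properly extends $CJ$). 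Consequently your key claim that every element of $\bar R$ is central, hence that $\bar R$ is commutative, is unsupported at the point where you invoke it; one can in fact prove $\Delta(\bar R)=0$ here, but only after already knowing that $\bar R$ is an abelian clean ring, i.e.\ after the facts you are trying to derive, so as written the route is circular in spirit. A smaller slip: the parenthetical ``a commutative exchange ring is von Neumann regular modulo its radical'' is false in general (the ring of bounded real sequences is commutative, clean, with zero Jacobson radical, yet not von Neumann regular); what you actually need is only that commutative -- more generally abelian -- exchange rings are clean.

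The repair is already in your hands and is exactly the paper's argument: by Proposition~\ref{4.3}(vii) the quotient $\bar R$ is reduced, and in a reduced ring all idempotents are central, so $\bar R$ is an abelian exchange ring and hence clean by Nicholson's theorem; commutativity of $\bar R$ (and any statement about $\Delta(\bar R)$) is unnecessary. With that single step replaced, the rest of your proof goes through and matches the paper's.
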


\begin{proof}
Since $R$ is exchange, $R/J(R)$ is exchange and all idempotents lift modulo $J(R)$ in view of \cite[Proposition 1.5]{nic}. Likewise, $R/J(R)$ is reduced utilizing Proposition \ref {4.3} (vii), whence it is abelian. Therefore, $R/J(R)$ is abelian exchange, and hence it is clean. Finally, taking into account \cite{nic}, one concludes that $R$ is clean, as required.
\end{proof}	

\begin{proposition}\label{2.14}
Let \(R\) be a ring. Then, a $C\Delta$ ring \(R\) is uniquely $C\Delta$ (that is, each element has a unique $C\Delta$-decomposition) if, and only if, \(\Delta(R) \cap C(R) = \{0\}\).
\end{proposition}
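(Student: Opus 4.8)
The plan is to prove the two implications separately, using the elementary fact (Lemma~\ref{2.2}) that $1+\Delta(R)\subseteq U(R)$ and the characterization of membership in $\Delta(R)$ recalled in the introduction.

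For the direction ``uniquely $C\Delta$ $\Rightarrow$ $\Delta(R)\cap C(R)=\{0\}$'': suppose $z\in \Delta(R)\cap C(R)$. Then the element $z$ of $R$ admits the two $C\Delta$-decompositions $z = z + 0$ (central part $z$, $\Delta$-part $0$) and $z = 0 + z$ (central part $0$, $\Delta$-part $z$). Since $R$ is uniquely $C\Delta$, these coincide, forcing $z=0$. This direction is immediate; the work is all in the converse.

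For the direction ``$\Delta(R)\cap C(R)=\{0\}$ $\Rightarrow$ uniquely $C\Delta$'': take any $a\in R$ with two $C\Delta$-decompositions $a = c_1 + d_1 = c_2 + d_2$, where $c_1,c_2\in C(R)$ and $d_1,d_2\in\Delta(R)$. Then $c_1 - c_2 = d_2 - d_1$. The left-hand side lies in $C(R)$ (a subgroup under addition) and the right-hand side lies in $\Delta(R)$ (a subring, hence closed under subtraction). Hence $c_1-c_2 = d_2-d_1 \in \Delta(R)\cap C(R) = \{0\}$, so $c_1=c_2$ and $d_1=d_2$, establishing uniqueness.

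The argument is short and the only ``obstacle'' is making sure the two ingredients used — that $C(R)$ is closed under subtraction and that $\Delta(R)$ is closed under subtraction — are both legitimately available; the first is obvious, and the second is exactly the statement (recalled in the introduction, following \cite{2}) that $\Delta(R)$ is a subring of $R$. I would state these closure facts explicitly in the write-up so the one-line computation $c_1-c_2=d_2-d_1\in\Delta(R)\cap C(R)$ is fully justified. No case analysis, no use of the $C\Delta$ hypothesis beyond the existence of decompositions, and no appeal to units is actually needed for either direction.
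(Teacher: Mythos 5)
Your proof is correct and follows essentially the same approach as the paper: the converse direction is identical (subtracting two decompositions lands in $\Delta(R)\cap C(R)$, using that $\Delta(R)$ is a subring), and your forward direction is a slightly more direct variant, exhibiting the two decompositions $z=z+0=0+z$ of $z$ itself, whereas the paper decomposes $1+r$ in two ways — both hinge on the same uniqueness idea.
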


\begin{proof}
Suppose that $R$ is uniquely $C\Delta$ and that \(c \in C \cap \Delta\). Let \(0 \not = r \in \Delta(R)\). We have \(1+r = (1-c) + (c+r)\), where \(1-c \in C(R)\) and \(c+r \in \Delta(R)\), because \(\Delta(R)\) is a subring of \(R\). The uniqueness of the \(C \Delta\)-decomposition gives that \(c=0\).

Conversely, if \(C \cap \Delta = 0\) and \(c+r=c^\prime + r^\prime\) are two \(C\Delta\) decompositions, we infer \(c-c^\prime=r^\prime-r \in C \cap \Delta = 0\) and hence \(c = c^\prime, r = r^\prime\), as required.
\end{proof}

\begin{proposition}\label{2.15}
Let \(R\) be a ring. Then, the following statements hold:
\begin{enumerate}
\item
\(U(R) \cap (C + \Delta) \subseteq U(C + \Delta)\).
\item
 \(\Delta(R) \subseteq \Delta(C + \Delta)\).
\item
\(C(R) \subseteq C(C + \Delta)\).
\end{enumerate}
\end{proposition}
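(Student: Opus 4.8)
The plan is to verify each of the three containments directly, using the fact that $C + \Delta$ is a subring of $R$ (it is closed under addition and multiplication since $C(R)$ is a subring, $\Delta(R)$ is a subring, and central elements commute with everything), together with the characterizations of $\Delta$ and $U$ recalled in the introduction and in Lemma~\ref{2.2}. Throughout I abbreviate $S := C(R) + \Delta(R)$.

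For part (iii), this is the easiest: if $c \in C(R)$, then $c$ commutes with every element of $R$, in particular with every element of the subring $S$, so $c \in C(S)$; hence $C(R) \subseteq C(S)$. For part (i), suppose $v \in U(R) \cap S$. Since $v \in S$, write $v = c + r$ with $c \in C(R)$ and $r \in \Delta(R)$. By Lemma~\ref{2.2} applied inside... no --- more carefully, I want to show $v^{-1} \in S$. The key observation is that $v^{-1} = v^{-1}$ and we need to exhibit it as central-plus-$\Delta$. Write $v = c + r$; then $v - r = c \in C(R)$. If $c$ were a unit this would be immediate, but $c$ need not be a unit in general, so instead I would argue: $v \in U(R)$ and $r \in \Delta(R)$, so by the defining property of $\Delta(R)$ (closure under multiplication by units) $v^{-1} r \in \Delta(R)$ and $r v^{-1} \in \Delta(R)$; then $v^{-1} = v^{-1}(v - r)v^{-1} \cdot$ --- hmm, let me instead use $v^{-1} = v^{-1} c v^{-1} + v^{-1} r v^{-1}$? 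That is not obviously in $S$ either. The cleaner route: from $v = c + r$ we get $c = v - r = v(1 - v^{-1}r)$, and $1 - v^{-1}r \in U(R)$ since $v^{-1}r \in \Delta(R)$; hence $c = v w$ with $w := 1 - v^{-1} r \in U(R)$, so $c \in U(R)$. Now $v^{-1} = w c^{-1} = (1 - v^{-1}r)c^{-1} = c^{-1} - v^{-1} r c^{-1}$, and $c^{-1} \in C(R)$ while $v^{-1} r c^{-1} \in \Delta(R)$ (again using closure of $\Delta(R)$ under multiplication by the units $v^{-1}$ and $c^{-1}$). Therefore $v^{-1} \in C(R) + \Delta(R) = S$, so $v \in U(S)$, proving (i).

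For part (ii), let $r \in \Delta(R)$; I must show $r \in \Delta(S)$, i.e., $1 - u r \in U(S)$ for every $u \in U(S)$. Fix $u \in U(S)$. Then $u \in U(S) \subseteq U(R)$, so $u r \in \Delta(R)$ (closure of $\Delta(R)$ under left multiplication by the unit $u$ of $R$), hence $1 - ur \in U(R)$. Moreover $1 - ur \in S$ since $1 \in S$ and $ur \in \Delta(R) \subseteq S$. Thus $1 - ur \in U(R) \cap S$, and by part (i) this lies in $U(S)$. Since this holds for all $u \in U(S)$, we conclude $r \in \Delta(S)$.

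The main obstacle --- and the step that requires the small trick above --- is part (i): one cannot simply say "a unit of $R$ lying in a subring is a unit of that subring" (false in general), so the argument must produce the inverse explicitly inside $S$, which is exactly where the identity $c = v(1 - v^{-1}r)$ forces $c$ to be a unit of $R$ and lets one write $v^{-1} = c^{-1} - v^{-1} r c^{-1}$. Parts (ii) and (iii) are then routine, with (ii) leaning on (i). I should double-check in (i) that $v^{-1} r c^{-1}$ genuinely lands in $\Delta(R)$: this is two applications of the fact (from \cite{2}) that $\Delta(R)$ is closed under multiplication by units of $R$ on either side, applied to $r \in \Delta(R)$ with the units $v^{-1}, c^{-1} \in U(R)$.
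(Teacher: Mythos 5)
Your proof is correct and follows essentially the same route as the paper: in (i) you show the central part $c$ is a unit of $R$ (your factorization $c=v(1-v^{-1}r)$ is exactly the content of Lemma~\ref{2.2}) and then exhibit the inverse explicitly as $c^{-1}-v^{-1}rc^{-1}\in C+\Delta$, just as the paper writes $b=c^{-1}-c^{-1}rb$. The only cosmetic difference is in (ii), where you invoke part (i) to conclude $1-ur\in U(C+\Delta)$ while the paper constructs the inverse inside $C+\Delta$ directly; both are valid.
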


\begin{proof}
(i) Choose \(a \in U(R) \cap (C + \Delta)\). Then, \(a = c + r\) for \(c \in C(R)\), \(r \in \Delta(R)\), and there exists \(b \in U(R)\) such that \(ab=ba=1\). We deduce that \(c = a - r \in U(R) + \Delta(R) \subseteq U(R)\) whence \(c \in C(R) \cap U(R) \subseteq U(C(R))\). Therefore, \(ab = (c+r)b = cb + rb = 1\). Thus, \[b = c^{-1}(1-rb) = c^{-1} - c^{-1}rb \in C + \Delta,\] because $\Delta(R)$ is a subring of $R$. This manifestly shows that the inverse of \(a\) is, indeed, in \(C + \Delta\).\\
(ii) Choose \(x \in \Delta(R)\). So, \(x \in C + \Delta\) and \(1+xy \in U(R)\) for every \(y \in U(R)\). Thus, \(1 + xy \in U(R)\) for every \(y \in U(C + \Delta)\), and there exists \(v \in U(R)\) such that \((1+xy)v = v(1+xy)=1\). Therefore, we have \(v = 1 -xyv \in C + \Delta\), and hence \(1+xy \in U(C + \Delta)\) for every \(y \in U(C + \Delta)\). This demonstrates that \(x \in \Delta(C + \Delta)\).\\
(iii) Choose \(x \in C(R)\). Then, \(x \in C + \Delta\) and \(xy = yx\) for every \(y \in R\). Sos, \(xy = yx\) for every \(y \in C + \Delta\). This gives that \(x \in C(C + \Delta)\).
\end{proof}

The next appears immediately.

\begin{corollary}\label{2.16}
Let \(R\) be a ring. Then, \(C(R) + \Delta(R)\) is a \(C\Delta\) ring.
\end{corollary}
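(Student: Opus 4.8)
The plan is to obtain this as an immediate consequence of Proposition~\ref{2.15}. Put $S := C(R)+\Delta(R)$. The one preliminary point is that $S$ is actually a subring of $R$ with identity $1$: clearly $1\in C(R)\subseteq S$ and $S$ is closed under subtraction, and for a product $(c+r)(c'+r') = cc' + (cr' + c'r + rr')$ (with $c,c'\in C(R)$, $r,r'\in\Delta(R)$) one has $cc'\in C(R)$ and $rr'\in\Delta(R)$ since $\Delta(R)$ is a subring, while the mixed terms $cr'$, $c'r$ lie in $\Delta(R)$ because $\Delta(R)$ is stable under multiplication by central elements of $R$. I would record this last fact separately (it amounts to checking $1-u(cr')=1-c(ur')\in U(R)$ for every unit $u$, where $ur'\in\Delta(R)$, and it is in particular automatic whenever $\Delta(R)=J(R)$); once it is in place, $C(S)$ and $\Delta(S)$ are meaningful.

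The core of the argument is then a single step. Let $a\in S$ be arbitrary; by the very definition of $S$ we may write $a=c+r$ with $c\in C(R)$ and $r\in\Delta(R)$. Proposition~\ref{2.15}(iii) gives $c\in C(R)\subseteq C(S)$, and Proposition~\ref{2.15}(ii) gives $r\in\Delta(R)\subseteq\Delta(S)$. Hence $a=c+r$ is already a $C\Delta$-decomposition of $a$ carried out \emph{inside} $S$, and since $a$ was arbitrary, $S=C(R)+\Delta(R)$ is a $C\Delta$ ring.

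Accordingly I do not anticipate a serious obstacle: once Proposition~\ref{2.15} is available the corollary is immediate, and the only genuinely substantive verification is that $C(R)+\Delta(R)$ is closed under multiplication, i.e.\ that $\Delta(R)$ absorbs central multiplications. This is the same fact that is tacitly required already to make sense of the symbol $\Delta(C+\Delta)$ in Proposition~\ref{2.15} itself, so within the framework of the paper it can reasonably be taken for granted here; part~(i) of Proposition~\ref{2.15} plays no role in this corollary.
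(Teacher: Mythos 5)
Your core argument is exactly the paper's: the corollary is read off at once from Proposition~\ref{2.15}, writing an arbitrary $a\in C(R)+\Delta(R)$ as $a=c+r$ and invoking part~(iii) for $c\in C(R)\subseteq C(C+\Delta)$ and part~(ii) for $r\in\Delta(R)\subseteq\Delta(C+\Delta)$; part~(i) indeed plays no role. On that level the proposal coincides with the intended proof.

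The one point where your write-up goes beyond the paper is also where it goes wrong: the claim that $\Delta(R)$ is stable under multiplication by central elements, with the sketched check ``$1-u(cr')=1-c(ur')\in U(R)$''. Knowing $ur'\in\Delta(R)$ only tells you that $1-v(ur')$ is a unit when $v$ is a \emph{unit}; the central element $c$ need not be one, so the displayed membership does not follow. In fact the absorption claim is false in general: take $A=K[[s;\sigma]]$, a skew power series ring over a field $K$ with $\sigma$ of infinite order, and $R=A[x]$ with $x$ a central indeterminate. Since $A$ is a local domain, $U(R)=U(A)$, hence $\Delta(R)=J(A)=sA$ and $C(R)=K^{\sigma}[x]$ (coefficients in the fixed field); then $x\in C(R)$ and $s\in\Delta(R)$, yet $xs=sx\notin C(R)+\Delta(R)$ because its coefficient at $x$ is $s\notin K^{\sigma}$. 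So $C(R)+\Delta(R)$ need not even be multiplicatively closed. As you correctly observe, this closure is tacitly presupposed already in Proposition~\ref{2.15} (and in the corollary itself), so the defect is one you share with the paper rather than introduce; but the separate lemma you propose to ``record'' cannot be proved as stated, and any honest version of the corollary must either restrict the class of rings (e.g.\ where $\Delta(R)=J(R)$, as in the $CJ$ prototype) or replace $C(R)+\Delta(R)$ by the subring it generates.
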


Recall that a ring $R$ is called {\it division} if every non-zero element of $R$ is invertible. Also, a ring $R$ is called {\it local} if $R/J(R)$ is a division ring.

\begin{example}\label{3.24}
(i) It is well known that any division ring is a local ring, and hence it is a $CU$ ring in virtue of \cite[Example 2.2]{6}. But, one plainly verifies that division rings are not $C\Delta$.\\
(ii) Letting \(K\) be a division ring, we consider the ring \(D_2 (K)\). We know that \(D_2(K)\) is a local ring, and so it is a \(CU\) ring, but a simple check guarantees that it is not a \(C\Delta\) ring.
\end{example}

A ring $R$ is called $\Delta U$, provided $1+\Delta(R) = U(R)$ (see \cite {3}).

\begin{remark}\label{2.18}
One readily verifies that, if \(R\) is a $\Delta U$ ring, then \(R\) is \(C\Delta\) precisely when it is \(CU\).
\end{remark}

\begin{proposition}\label{2.19}
Let \(R\) be a ring and \(a \in R\). Then, \(a\) has a \(C\Delta\)-decomposition if, and only if, for each \(p \in U(R)\), \(pap^{-1}\) has a \(C\Delta\)-decomposition.
\end{proposition}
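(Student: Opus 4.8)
The statement is symmetric: conjugation by $p\in U(R)$ is an automorphism of $R$ with inverse conjugation by $p^{-1}$, so it suffices to prove one direction, say that if $a=c+r$ is a $C\Delta$-decomposition then $pap^{-1}$ has one as well. The natural guess is that $pap^{-1}=pcp^{-1}+prp^{-1}$ is the desired decomposition, but $pcp^{-1}$ need not be central. So the plan is to massage this: write $pap^{-1}=c+(pap^{-1}-c)$ and show that the ``error term'' $pap^{-1}-c=p(a-p^{-1}cp)p^{-1}$ lies in $\Delta(R)$, using that $c$ is central together with the commutator estimate from Proposition~\ref{4.3}(ii).

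Here is the key computation. Since $c\in C(R)$ we have $a-p^{-1}cp=(c+r)-p^{-1}cp=r+(cp-p^{-1}c p^{2})p^{-1}\cdot$, which is awkward; cleaner is to observe directly that
\[
pap^{-1}-c \;=\; p a p^{-1} - p c p^{-1} + (pcp^{-1}-c)\;=\;prp^{-1} + (pc-cp)p^{-1}.
\]
The first summand $prp^{-1}$ lies in $\Delta(R)$ because $\Delta(R)$ is closed under multiplication by units on both sides (this is exactly the defining feature of $\Delta(R)$ recalled in the Introduction, and it is also immediate from Lemma~\ref{2.2}-style manipulation). For the second summand, $pc-cp$ is a commutator, hence lies in $\Delta(R)$ by Proposition~\ref{4.3}(ii) since $R$ is a $C\Delta$ ring; multiplying on the right by the unit $p^{-1}$ keeps it in $\Delta(R)$. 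As $\Delta(R)$ is a subring (in particular closed under addition), the sum $pap^{-1}-c$ lies in $\Delta(R)$, and therefore $pap^{-1}=c+(pap^{-1}-c)$ is a valid $C\Delta$-decomposition with central part $c$.

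The only subtlety — and the step I would be most careful about — is that applying Proposition~\ref{4.3}(ii) presupposes $R$ is a $C\Delta$ ring, which is fine here since that is our standing hypothesis; but one should note that the forward implication of Proposition~\ref{2.19} is really a statement about a single element $a$ in a ring that is already known (or assumed) to be $C\Delta$ overall, and the same caveat applies when running the argument in reverse for $p^{-1}$. No other obstacle arises: the closure of $\Delta(R)$ under left/right multiplication by units and under addition does all the work, and the commutator lemma supplies the one non-formal ingredient. I would present the proof in two short lines: the symmetry reduction, then the displayed identity above with the two membership remarks.
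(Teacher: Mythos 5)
Your displayed identity is correct, but the way you justify its second summand contains a real misstep: Proposition~\ref{4.3}(ii) applies only when $R$ is a $C\Delta$ \emph{ring}, and that is \emph{not} a hypothesis of Proposition~\ref{2.19} --- the statement is about a single element $a$ in an arbitrary ring $R$, so there is no ``standing hypothesis'' to lean on, and your caveat paragraph resolves this concern incorrectly. You are saved only by an observation you overlooked: since $c\in C(R)$, the commutator $pc-cp$ is identically zero, so the term $(pc-cp)p^{-1}$ vanishes and no commutator lemma is needed at all. Once you notice this, your argument collapses to exactly the paper's proof: $pap^{-1}=pcp^{-1}+prp^{-1}=c+prp^{-1}$, where $pcp^{-1}=c$ is central and $prp^{-1}\in\Delta(R)$ because $\Delta(R)$ is closed under multiplication by units; the converse is the same computation with $p^{-1}$ in place of $p$ (your symmetry reduction is fine). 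So the conclusion and the underlying mechanism are right, but as written the proof cites an inapplicable result with a false justification; delete the detour through Proposition~\ref{4.3}(ii) and state $pcp^{-1}=c$ directly.
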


\begin{proof}
Assume that \(a\) has a \(C\Delta\)-decomposition. So, \(a = c+r\), where \(c \in C(R)\) and \(r \in \Delta(R)\) such that, for an invertible \(p \in R\), \(pap^{-1} = pcp^{-1} + prp^{-1}\), where \(pcp^{-1} = c\) is central and \(prp^{-1} \in \Delta(R)\).

Conversely, supposing that \(pap^{-1} \in R\) has a \(C\Delta\)-decomposition, we receive \(pap^{-1} = t + x\), where \(t \in C(R)\) and \(x \in \Delta(R)\). Hence, \(a = p^{-1}tp + p^{-1}xp\) is a \(C\Delta\)-decomposition of \(a\).
\end{proof}

A ring $R$ is said to be $UJ$, provided $U(R)=1+J(R)$ (for more details, see \cite{12}). Recall that a ring $R$ is called $UU$, provided $U(R)=1+Nil(R)$ (see, for a more account, \cite {13}).

\begin{lemma}\label{2.22}
The following items hold:
\begin{enumerate}
\item
If \( R \) is a \(UJ\) ring, then \( Nil(R) \subseteq J(R) \).
\item
If \( R \) is a \(UU\) ring, then \( J(R), \Delta(R) \subseteq Nil(R) \).
\item
If \( R \) is a $\Delta$U ring, then \(Nil(R) \subseteq \Delta(R) \).
\end{enumerate}
\end{lemma}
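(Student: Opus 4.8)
The plan is to reduce all three items to one elementary observation about units. If $n\in Nil(R)$ with $n^k=0$, then
\[
(1-n)(1+n+\cdots+n^{k-1})=1=(1+n+\cdots+n^{k-1})(1-n),
\]
and similarly for $1+n$, so both $1+n$ and $1-n$ belong to $U(R)$; dually, if $a\in\Delta(R)$ then applying the defining property ``$1-ua\in U(R)$ for all $u\in U(R)$'' with $u=1$ gives $1-a\in U(R)$, and of course $1+j\in U(R)$ for every $j\in J(R)$. Each of the hypotheses $UJ$, $UU$, $\Delta U$ then forces the desired reverse inclusion by simply reading off which coset of $1$ the relevant unit must lie in.

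For (i), take $n\in Nil(R)$; since $1+n\in U(R)$ and $R$ is $UJ$, we may write $1+n=1+j$ with $j\in J(R)$, so $n=j\in J(R)$, i.e.\ $Nil(R)\subseteq J(R)$. For (iii), take $n\in Nil(R)$; since $1+n\in U(R)$ and $R$ is $\Delta U$, we have $1+n=1+d$ with $d\in\Delta(R)$, hence $n=d\in\Delta(R)$ and $Nil(R)\subseteq\Delta(R)$. For (ii), it suffices to show $\Delta(R)\subseteq Nil(R)$, since $J(R)\subseteq\Delta(R)$ holds in every ring; given $x\in\Delta(R)$ we have $1-x\in U(R)=1+Nil(R)$, so $1-x=1+m$ with $m\in Nil(R)$, whence $x=-m\in Nil(R)$ (negation preserves nilpotency). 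Chaining the two inclusions yields $J(R),\Delta(R)\subseteq Nil(R)$.

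There is essentially no obstacle here: the only points requiring a moment's care are that the definition of $\Delta(R)$ specialized at $u=1$ already delivers $1-a\in U(R)$, and that $-m$ is nilpotent whenever $m$ is. The lemma is a direct bookkeeping consequence of the definitions of $UJ$, $UU$ and $\Delta U$ together with the standard fact that $1$ plus (or minus) a nilpotent element is a unit.
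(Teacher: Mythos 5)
Your proof is correct and follows essentially the same route as the paper: in each case one notes that the relevant element sits in a unit of the form $1\pm(\text{nilpotent/radical/}\Delta)$ and then reads off membership from the equality $U(R)=1+J(R)$, $1+Nil(R)$, or $1+\Delta(R)$. The only cosmetic difference is that in (ii) the paper verifies $J(R)\subseteq Nil(R)$ directly and then appeals to $1+\Delta(R)\subseteq U(R)$, while you reduce everything to $\Delta(R)\subseteq Nil(R)$ via $J(R)\subseteq\Delta(R)$ and a sign adjustment; both are equally valid.
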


\begin{proof}
\begin{enumerate}
\item
Choosing $a\in Nil(R)$, we get $1+a\in 1+Nil(R)\subseteq U(R)=1+J(R)$. Hence, $a\in J(R)$.
\item
Choosing $a\in J(R)$, we get $1+a\in 1+J(R)\subseteq U(R)=1+Nil(R)$. Hence, $a\in Nil(R)$.\\
Since, $1+\Delta(R)\subseteq U(R)$, we are done.
\item
It can be argued in the same way as above, so the details are dropped off.
\end{enumerate}	
\end{proof}

\begin{corollary}\label{2.23}
Let $R$ be a ring. The following points hold:
\begin{enumerate}
\item
If \( R \) is \(UU\) and \(C\Delta\), then it is \(CN\).
\item
If \(R\) is \(\Delta U\) and \(CN\), then it is \(C \Delta\).
\item
If \(R\) is $UJ$ and \(CN\), then it is \(CJ\).
\item
If \( R \) is \(UU\) and \(CJ\), then it is \(CN\).
\end{enumerate}
\end{corollary}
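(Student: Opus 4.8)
The plan is entirely routine: in each of the four cases I would take an arbitrary $a \in R$, invoke the given decomposition hypothesis to write $a = c + x$ with $c \in C(R)$ and $x$ in the relevant radical-type subset, and then replace the membership of $x$ by the desired one by means of the appropriate inclusion from Lemma~\ref{2.22}. No genuinely new argument beyond those inclusions is needed.

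For item (i), given a $C\Delta$-decomposition $a = c + r$ with $c \in C(R)$ and $r \in \Delta(R)$, the hypothesis that $R$ is $UU$ together with Lemma~\ref{2.22}(ii) yields $\Delta(R) \subseteq Nil(R)$, so $r$ is nilpotent and $a = c + r$ is a $CN$-decomposition. Item (iv) is handled in the same way: starting from a $CJ$-decomposition $a = c + j$, the inclusion $J(R) \subseteq Nil(R)$ (again Lemma~\ref{2.22}(ii), valid since $R$ is $UU$) shows $j \in Nil(R)$, so $a = c + j$ is a $CN$-decomposition.

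For item (ii), one starts from a $CN$-decomposition $a = c + n$; since $R$ is $\Delta U$, Lemma~\ref{2.22}(iii) gives $Nil(R) \subseteq \Delta(R)$, so $n \in \Delta(R)$ and $a = c + n$ is a $C\Delta$-decomposition. For item (iii), again start from a $CN$-decomposition $a = c + n$; since $R$ is $UJ$, Lemma~\ref{2.22}(i) gives $Nil(R) \subseteq J(R)$, so $n \in J(R)$ and $a = c + n$ is a $CJ$-decomposition. The only point requiring any care is matching each hypothesis to the correct clause of Lemma~\ref{2.22} (namely $UU$ with part (ii), $\Delta U$ with part (iii), and $UJ$ with part (i)); since every step reduces to a one-line set inclusion applied to a single summand, I do not anticipate any real obstacle.
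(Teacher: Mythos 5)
Your proof is correct and is exactly the argument the paper intends: the corollary is stated as an immediate consequence of Lemma~\ref{2.22}, and your matching of each hypothesis to the corresponding inclusion ($UU$ with part (ii), $\Delta U$ with part (iii), $UJ$ with part (i)) applied to the non-central summand is precisely how it follows.
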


\begin{lemma}\label{4.1}
Let \( R \) be a commutative ring. Then, the following are equivalent:
\begin{enumerate}
\item
\( R \) is a $CN$ ring.
\item
\( R \) is a $CJ$ ring.
\item
\( R \) is a $C\Delta$ ring.
\end{enumerate}
\end{lemma}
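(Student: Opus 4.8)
The plan is to reduce everything to the single observation that $R$ commutative forces $C(R)=R$, so that prescribing a central summand imposes no constraint whatsoever. Concretely, I would first record the chain of inclusions $Nil(R)\subseteq J(R)\subseteq\Delta(R)$ valid for a commutative ring: the second inclusion is the general fact recalled in the introduction (and is exactly the content of Example~\ref{2.10}(iv)), while the first is the standard result that in a commutative ring the set of nilpotent elements coincides with the nilradical, hence lies inside every prime ideal and so inside $J(R)$.

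Given this, I would argue the three implications cyclically, though each turns out to be essentially formal. For $(1)\Rightarrow(2)$: if $a=c+n$ with $c\in C(R)$ and $n\in Nil(R)$, then $n\in J(R)$ by the inclusion above, so the very same decomposition witnesses that $a$ is $CJ$. For $(2)\Rightarrow(3)$: if $a=c+j$ with $j\in J(R)$, then $j\in\Delta(R)$, so $a$ is $C\Delta$; this is just Example~\ref{2.10}(iv). For $(3)\Rightarrow(1)$: here one need not even invoke the hypothesis, since $C(R)=R$ and the trivial decomposition $a=a+0$ with $0\in Nil(R)$ already shows that $a$ is $CN$. Assembling these, each of (1), (2), (3) holds for every commutative ring, hence the three conditions are (trivially) equivalent.

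I do not expect any genuine obstacle: the only step that is not purely formal is the containment $Nil(R)\subseteq J(R)$, which is special to the commutative case and can fail for general rings, and even that is routine commutative algebra. The real point of the lemma is the remark that for commutative rings the strict hierarchy $CN\Rightarrow CJ\Rightarrow C\Delta$ available in the noncommutative setting degenerates, precisely because the freedom in choosing the central part is total.
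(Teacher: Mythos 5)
Your proposal is correct and follows essentially the same route as the paper: $(i)\Rightarrow(ii)$ via $Nil(R)\subseteq J(R)$ in the commutative case, $(ii)\Rightarrow(iii)$ via the general inclusion $J(R)\subseteq\Delta(R)$, and $(iii)\Rightarrow(i)$ by the observation that a commutative ring is automatically $CN$ (the paper simply cites this fact from the literature, which is exactly your trivial decomposition $a=a+0$). Your added remark that all three conditions hold unconditionally for commutative rings is the same underlying content, just stated more explicitly.
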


\begin{proof}
\((i) \Rightarrow (ii)\). This is obvious, because, in commutative rings, we always have $Nil(R)\subseteq J(R)$.
	
\((ii) \Rightarrow (iii)\). It is simple, because we always have $J(R)\subseteq \Delta(R)$.
	
\((iii) \Rightarrow (i)\). It is immediate by referring to \cite[Example 2.2]{11}.
\end{proof}

A ring $R$ is said to be {\it semi-local} if $R/J(R)$ is a left Artinian ring or, equivalently, $R/J(R)$ is a semi-simple ring.

\begin{lemma}\label{4.2}
Let \( R \) be an Artinian ring. Then, the following are equivalent:
\begin{enumerate}
\item
\( R \) is a $CN$ ring.
\item
\( R \) is a $CJ$ ring.
\item
\( R \) is a $C\Delta$ ring.
\end{enumerate}
\end{lemma}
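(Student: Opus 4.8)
The plan is to establish the cycle $(i)\Rightarrow(ii)\Rightarrow(iii)\Rightarrow(i)$, in each step passing to the semisimple Artinian quotient $R/J(R)$ and using that, because $R$ is Artinian, $J(R)$ is nilpotent and hence $J(R)\subseteq Nil(R)$. The implication $(ii)\Rightarrow(iii)$ comes for free from $J(R)\subseteq\Delta(R)$. The unifying observation I would exploit for the other two steps is that each of the three hypotheses forces $R/J(R)$ to be reduced; since $R/J(R)$ is then a reduced semisimple Artinian ring, it is a finite product $\prod_{i=1}^{m}D_i$ of division rings (a matrix block $M_{n_i}(D_i)$ with $n_i\ge 2$ would contain the nonzero nilpotent $e_{12}$), and one checks in a line that $\Delta(D)=0$ for a division ring $D$ — if $0\ne a\in D$ then $1-a^{-1}a=0\notin U(D)$ — so $\Delta(R/J(R))=0$. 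Corollary~\ref{2.8} then gives $\Delta(R)=J(R)$, and combining $J(R)\subseteq Nil(R)$ with $Nil(R)\subseteq J(R)$ (the latter from reducedness of $R/J(R)$) yields $\Delta(R)=J(R)=Nil(R)$, so the three kinds of decomposition literally coincide.

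To carry this out, for $(iii)\Rightarrow(i)$ I would take $R$ Artinian and $C\Delta$, invoke Proposition~\ref{4.3}(i) to see $R/J(R)$ is $C\Delta$ and Proposition~\ref{4.3}(vii) to see it is reduced, then run the argument above to get $\Delta(R)=J(R)$; a $C\Delta$-decomposition $a=c+r$ then has $r\in\Delta(R)=J(R)\subseteq Nil(R)$, which is a $CN$-decomposition. For $(i)\Rightarrow(ii)$ I would take $R$ Artinian and $CN$; since homomorphic images of $CN$ rings are visibly $CN$, each matrix block $M_{n_i}(D_i)$ of $R/J(R)\cong\prod_{i}M_{n_i}(D_i)$ would be $CN$, which is impossible for $n_i\ge 2$: the center of $M_n(D)$ is $Z(D)I_n$, yet for every $\lambda\in Z(D)$ the matrix $e_{11}-\lambda I_n$ is diagonal with a nonzero entry and so is not nilpotent, so the idempotent $e_{11}$ has no $CN$-decomposition. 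Hence all $n_i=1$, $R/J(R)$ is reduced, $Nil(R)\subseteq J(R)$, and a $CN$-decomposition $a=c+n$ is automatically a $CJ$-decomposition.

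I would simply quote the routine facts used (homomorphic images of $CN$ rings are $CN$; $\Delta$ vanishes on finite products of division rings; $Z(M_n(D))=Z(D)I_n$; $J(R)$ is nilpotent for $R$ Artinian). The one place with real content, and the one where the Artinian hypothesis genuinely bites, is the structural step: a reduced semisimple Artinian ring has no matrix blocks of size $\ge 2$, and $M_n(D)$ with $n\ge 2$ admits no $CN$-decomposition of $e_{11}$. I do not expect any serious difficulty beyond that; once $\Delta(R)=J(R)=Nil(R)$ is established the rest is bookkeeping, and the same scheme also recovers the commutative case of Lemma~\ref{4.1}.
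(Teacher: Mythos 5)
Your argument is correct, and it diverges from the paper's proof in one substantive place. The paper gets the equivalence of (ii) and (iii) for free by quoting the fact that Artinian (hence semi-local) rings satisfy $J(R)=\Delta(R)$ (Theorem 11 of Leroy--Matczuk), and then handles (i)$\Leftrightarrow$(ii) exactly as you do: pass to $R/J(R)\cong\prod_i M_{n_i}(D_i)$, use that $CN$ passes to quotients and factors, rule out blocks with $n_i\ge 2$ because matrix rings are not $CN$ (the paper cites this; your explicit check that $e_{11}-\lambda I_n$ is a non-nilpotent diagonal matrix is a nice self-contained replacement), conclude $R/J(R)$ is reduced, hence $Nil(R)\subseteq J(R)$, and use $J(R)\subseteq Nil(R)$ from nilpotency of the radical for the converse. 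Your route instead derives $\Delta(R)=J(R)$ by hand: reducedness of $R/J(R)$ (from Proposition~\ref{4.3}(vii), or from the block analysis in the $CN$ case) forces $R/J(R)$ to be a product of division rings, $\Delta$ vanishes on division rings and on their finite products, and Corollary~\ref{2.8} then gives $\Delta(R)=J(R)=Nil(R)$. What your approach buys is independence from the external semi-local result and a proof in which all three notions are seen to coincide on the nose; what it costs is that the identification $\Delta(R)=J(R)$ becomes hypothesis-dependent, whereas the paper has it unconditionally for every Artinian ring, which is why the paper can dispose of (ii)$\Leftrightarrow$(iii) in one line before touching the $CN$ condition. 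One small caveat: your closing remark that the same scheme recovers Lemma~\ref{4.1} overreaches, since for a general commutative ring $J(R)$ need not be nil and $R/J(R)$ need not be semisimple, so that case genuinely needs the separate (elementary) argument the paper cites; this does not affect the proof of the Artinian statement.
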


\begin{proof}
Since every Artinian ring is known to be semi-local, we have $J(R)=\Delta(R)$ by virtue of \cite[Theorem 11]{2}, whence (ii) and (iii) are equivalent.
	
\((i) \Rightarrow (ii)\). It suffices to show that $Nil(R)\subseteq J(R)$. Since $R$ is Artinian, one readily writes that $R/J(R)\cong \prod M_{n_i}(D_i)$ for some matrix rings $M_{n_i}(D_i)$ over division rings $D_i$. Also, $R/J(R)$ is $CN$ invoking \cite[Lemma 2.15]{11}, and hence each $M_{n_i}(D_i)$ is $CN$ consulting with \cite[Proposition 2.16]{11}. Since we know that the matrix rings are not $CN$, we arrive at $n_i=1$ for every $i$. Finally, $R/J(R)\cong \prod D_i$ and this insures that $R/J(R)$ is reduced. Thus, $Nil(R)\subseteq J(R)$.

\((ii) \Rightarrow (i)\). This is straightforward, because in Artinian rings we have always $J(R)\subseteq Nil(R)$.
\end{proof}

As finite rings are always Artinian, we directly extract the following consequence.

\begin{corollary}\label{4.4}
Let \( R \) be a finite ring. Then, the following are equivalent:
\begin{enumerate}
\item
\( R \) is a $CN$ ring.
\item
\( R \) is a $CJ$ ring.
\item
\( R \) is a $C\Delta$ ring.
\end{enumerate}
\end{corollary}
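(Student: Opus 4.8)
The plan is to reduce this corollary immediately to the already-proved Lemma~\ref{4.2}. The only thing to observe is that every finite ring is Artinian: a finite ring has only finitely many left ideals, so in particular the descending chain condition on left ideals holds vacuously. Hence a finite ring $R$ is (left) Artinian, and Lemma~\ref{4.2} applies verbatim to give the equivalence of the three conditions ``$R$ is $CN$'', ``$R$ is $CJ$'', ``$R$ is $C\Delta$''.

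Concretely, I would write: since $R$ is finite, it is Artinian; now invoke Lemma~\ref{4.2}. That is the entire argument — there is no genuine obstacle here, as the statement is flagged in the text (``As finite rings are always Artinian'') as a direct consequence. If one wanted to be slightly more self-contained, one could note that the key structural input used in Lemma~\ref{4.2} — namely $J(R) = \Delta(R)$ for semi-local rings (from \cite[Theorem 11]{2}), the Wedderburn--Artin decomposition $R/J(R) \cong \prod M_{n_i}(D_i)$, and the fact that matrix rings $M_n(D)$ over division rings are not $CN$ for $n \geq 2$ — all remain available for finite rings, since finite rings are semi-local and the $D_i$ appearing are then finite division rings (hence fields, by Wedderburn's little theorem, though this refinement is not needed).

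So the only ``step'' is the trivial observation about chain conditions, and the proof is a one-line appeal to Lemma~\ref{4.2}. There is nothing I expect to be difficult; the content of the corollary lies entirely in the preceding lemma.
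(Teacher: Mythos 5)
Your proposal is correct and coincides with the paper's own argument: the corollary is extracted directly from Lemma~\ref{4.2} via the observation that finite rings are Artinian, exactly as the text indicates.
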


Let $Nil_{*}(R)$ denote the {\it prime radical} (or, in other terms, the {\it lower nil-radical}) of a ring $R$, i.e., the intersection of all {\it prime ideals} of $R$. We know that $Nil_{*}(R)$ is a nil-ideal of $R$. It is also long known that a ring $R$ is called {\it $2$-primal} if its lower nil-radical $Nil_{*}(R)$ consists precisely of all the nilpotent elements of $R$, that is, $Nil_{*}(R)=Nil(R)$. For instance, it is well known that reduced rings and commutative rings are both $2$-primal.

\medskip

For an endomorphism $\alpha$ of a ring $R$, $R$ is called {\it $\alpha$-compatible} if, for any $a,b\in R$, $ab=0\Longleftrightarrow a\alpha (b)=0$, and in this case $\alpha$ is clearly injective.

\medskip

Let $R$ be a ring and suppose $\alpha : R \to R$ is a ring endomorphism. Then, $R[x; \alpha]$ denotes the {\it ring of skew polynomial} over $R$ with multiplication defined by $xr = \alpha(r)x$ for all $r \in R$. In particular, $R[x] = R[x; 1_R]$ is the {\it ring of polynomials} over $R$.

\begin{proposition}\label{4.12}
Let \( R \) be a 2-primal and \(\alpha\)-compatible ring. Then,
\[
\Delta(R[x, \alpha]) = \Delta(R) + Nil_{*}(R[x, \alpha])x.
\]
\end{proposition}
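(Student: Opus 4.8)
The plan is to prove the two inclusions separately, using the known description of the Jacobson radical of a skew polynomial ring over a $2$-primal $\alpha$-compatible ring and the characterization of $\Delta$ via multiplication by units. First I would recall that, for a $2$-primal $\alpha$-compatible ring $R$, the nilpotent elements of $R[x,\alpha]$ form an ideal, namely $Nil_{*}(R[x,\alpha]) = Nil_{*}(R)[x,\alpha]$, and that $J(R[x,\alpha]) = (J(R[x,\alpha])\cap R) + Nil_{*}(R[x,\alpha])x$ with $J(R[x,\alpha])\cap R$ a nil ideal (this is the Amitsur-type description available under these hypotheses). In particular $U(R[x,\alpha]) = U(R) + Nil_{*}(R[x,\alpha])x$, which will be the engine of both inclusions.

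For the inclusion $\Delta(R) + Nil_{*}(R[x,\alpha])x \subseteq \Delta(R[x,\alpha])$, I would take $a \in \Delta(R)$ and $g \in Nil_{*}(R[x,\alpha])x$, and show $f := a + g$ lies in $\Delta(R[x,\alpha])$, i.e. $1 - u f \in U(R[x,\alpha])$ for every $u \in U(R[x,\alpha])$. Writing $u = u_0 + h$ with $u_0 \in U(R)$ and $h \in Nil_{*}(R[x,\alpha])x$, one computes $uf = u_0 a + (\text{terms in } Nil_{*}(R[x,\alpha])x)$, using that $Nil_{*}(R[x,\alpha])x$ is a (two-sided) ideal of $R[x,\alpha]$ and that it absorbs $R$ on both sides; here $\alpha$-compatibility guarantees $x \cdot Nil_{*}(R) \subseteq Nil_{*}(R)\,x$ and hence that products stay inside $Nil_{*}(R[x,\alpha])x$. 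Thus $1 - uf = (1 - u_0 a) - (\text{element of } Nil_{*}(R[x,\alpha])x)$; since $a \in \Delta(R)$ we have $1 - u_0 a \in U(R)$, and adding an element of the nil ideal $Nil_{*}(R[x,\alpha])x \subseteq J(R[x,\alpha])$ keeps us in $U(R[x,\alpha])$, using Lemma~\ref{2.2} applied in $R[x,\alpha]$. This gives the desired membership.

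For the reverse inclusion $\Delta(R[x,\alpha]) \subseteq \Delta(R) + Nil_{*}(R[x,\alpha])x$, take $f \in \Delta(R[x,\alpha])$ and write $f = a_0 + g$ with $a_0 \in R$ its constant term and $g$ the part of positive degree. First I would argue $g \in Nil_{*}(R[x,\alpha])x$: for this one shows that the image of $f$ in $R[x,\alpha]/Nil_{*}(R[x,\alpha])x$ must be central-like enough to force the higher terms to vanish modulo the nil ideal — concretely, $\Delta$ is closed under multiplication by units, and conjugating/multiplying by the unit $x$-adic style units or invoking that $\Delta(R[x,\alpha])$ maps into $\Delta$ of the quotient (Corollary~\ref{2.9}-type reasoning, since $Nil_{*}(R[x,\alpha])x \subseteq J(R[x,\alpha])$), combined with the fact that in $R[x,\alpha]/Nil_{*}(R[x,\alpha])x \cong (R/Nil_{*}(R))[x,\alpha]$ — a polynomial ring over a reduced ring — the set $\Delta$ is contained in the constants. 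Then $a_0 = f - g$ still lies in $\Delta(R[x,\alpha])$, and restricting the defining condition to units $u \in U(R) \subseteq U(R[x,\alpha])$ shows $1 - u a_0 \in U(R[x,\alpha]) \cap R = U(R)$, i.e. $a_0 \in \Delta(R)$.

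The main obstacle I expect is the step showing $\Delta\big((R/Nil_{*}(R))[x,\alpha]\big)$ is contained in the constant polynomials — equivalently, that $\Delta$ of a skew polynomial ring over a reduced $\alpha$-compatible ring contains no element of positive degree. The idea is that for a reduced $\alpha$-compatible base, $R[x,\alpha]$ has "few" units (the units are concentrated in degree $0$, by a standard degree argument using that $R$ is reduced hence has no nonzero nilpotents to create cancellation), so if $f$ had positive degree one could exhibit a unit $u$ with $1 - uf$ of positive degree and not a unit; the $\alpha$-compatibility is exactly what makes the leading-term bookkeeping in $xr = \alpha(r)x$ behave, preventing degree collapse. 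This reduces the whole proposition to the genuinely structural input about $J$ and units of $R[x,\alpha]$ under the $2$-primal, $\alpha$-compatible hypotheses, which I would cite rather than reprove.
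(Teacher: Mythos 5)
Your proposal is essentially correct and runs on the same engine as the paper's proof, namely the structure theory of $R[x,\alpha]$ over a $2$-primal $\alpha$-compatible ring: $U(R[x,\alpha])=U(R)+Nil_{*}(R[x,\alpha])x$ and $Nil_{*}(R[x,\alpha])=Nil_{*}(R)[x,\alpha]$, which are exactly \cite[Corollary 2.14]{18} and \cite[Lemma 2.2]{18} that the paper cites and which you also say you would cite rather than reprove. Your first inclusion is the paper's converse direction almost verbatim (the paper closes it by noting $U(R)+Nil_{*}(R[x,\alpha])x\subseteq U(R[x,\alpha])$, you close it with Lemma~\ref{2.2} in $R[x,\alpha]$; same thing). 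For the hard inclusion the paper is more direct than you: it applies the unit criterion to $1-uf$ for each \emph{constant} unit $u\in U(R)$ and reads off at once that $1-ua_{0}\in U(R)$ and $ua_{i}\in Nil_{*}(R)$ for $i\geq 1$, hence $a_{0}\in\Delta(R)$ and $a_{i}\in Nil_{*}(R)$, with no quotient or degree argument. Your detour can be made to work, but note a slip: $R[x,\alpha]/Nil_{*}(R[x,\alpha])x$ is \emph{not} isomorphic to $(R/Nil_{*}(R))[x,\bar{\alpha}]$, because factoring by $Nil_{*}(R[x,\alpha])x$ does not identify constant terms lying in $Nil_{*}(R)$. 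The correct move is to factor by the full nil ideal $Nil_{*}(R[x,\alpha])=Nil_{*}(R)[x,\alpha]\subseteq J(R[x,\alpha])$; then Corollary~\ref{2.9} applies, the quotient really is the skew polynomial ring over the reduced compatible ring $R/Nil_{*}(R)$, your degree argument (units there are constants, so already $u=1$ works) forces $a_{i}\in Nil_{*}(R)$ for all $i\geq 1$, and your final step $a_{0}=f-g\in\Delta(R[x,\alpha])$, hence $a_{0}\in\Delta(R)$ by testing against $u\in U(R)$, goes through. With that repair your proof and the paper's differ only in bookkeeping, the paper's version being shorter because it extracts the coefficient conditions directly from $1-uf\in U(R[x,\alpha])$.
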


\begin{proof}
Writing that \( f = \sum_{i=0}^{n} a_i x^i \in \Delta(R[x, \alpha]) \), then, for every \( u \in U(R) \), we have that \( 1 - uf \in U(R[x, \alpha]) \). Thus, an appeal to \cite[Corollary 2.14]{18} ensures that \( 1 - ua_0 \in U(R) \) and, for every \( 1 \le  i \le  n \), \( ua_i \in Nil_{*}(R) \). Since \( Nil_{*}(R) \) is an ideal, we have \( a_0 \in \Delta(R) \) and, for every \( 1 \le  i \le  n \), \( a_i \in Nil_{*}(R) \). But, as \( R \) is $2$-primal, \cite[Lemma 2.2]{18} is applicable to get that \( Nil_{*}(R)[x,\alpha] = Nil_{*}(R[x, \alpha]) \).
	
Conversely, assume \( f \in \Delta(R) + Nil_{*}(R[x, \alpha])x \) and \( u \in U(R[x,\alpha]) \). Then, a consultation with \cite[Corollary 2.14]{18} assures that \( u \in U(R) + Nil_{*}(R[x, \alpha])x \). Since \( R \) is a $2$-primal ring, we have \[ 1 - uf \in U(R) + Nil_{*}(R[x, \alpha])x \subseteq U(R[x, \alpha]) ,\] and thus \( f \in \Delta(R[x, \alpha]) \), as required.
\end{proof}

\begin{lemma}\label{4.9}
Let \( R \) be a $CN$ ring. Then, for every $e\in Id(R)$, $eRe$ is too a $CN$ ring.
\end{lemma}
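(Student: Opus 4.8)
The plan is to mimic, almost verbatim, the corner-ring argument already carried out for $C\Delta$ rings in Proposition~\ref{4.3}(viii), with "$\Delta(R)$" replaced by "$Nil(R)$" throughout. Fix an idempotent $e\in Id(R)$ and pick an arbitrary $a\in eRe$, so that $a=eae$. Using that $R$ is $CN$, write $a=c+n$ with $c\in C(R)$ and $n\in Nil(R)$; the goal is to extract from this a $CN$-decomposition of $a$ that lives inside $eRe$.

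First I would pin down how $e$ interacts with the two summands. From $ea=a=ae$ together with the centrality of $c$ (so $ec=ce$), comparing $ec+en=a=ce+ne$ yields $en=ne$; hence $e$ commutes with $n$. Consequently $ene=en=ne$ and $ece=ec=ce$, and multiplying out $a=ae$ (or $a=eae$) gives the decomposition $a=ece+ene$ with both pieces lying in $eRe$.

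Next I would verify that these pieces are of the required type in the corner ring. That $ece\in C(eRe)$ is the standard fact that a central element of $R$ cut down by $e$ is central in $eRe$: for $y=eye\in eRe$ one computes $(ece)(eye)=eyce=(eye)(ece)$, using that $c$ commutes with everything. For the nilpotent part, if $n^k=0$ for some $k\ge 1$, then, since $e$ commutes with $n$, $(ene)^k=(ne)^k=n^ke^k=0$, so $ene$ is a nilpotent element of $eRe$, i.e.\ $ene\in Nil(eRe)$. Thus $a=ece+ene$ is a $CN$-decomposition of $a$ in $eRe$, and as $a$ was arbitrary, $eRe$ is a $CN$ ring.

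I do not anticipate a genuine obstacle here; the only point needing a moment's care is that the nilpotent summand both \emph{lands inside} the corner ring and \emph{remains nilpotent} there, and both follow immediately once $en=ne$ is established. In fact this is strictly easier than the $C\Delta$ case of Proposition~\ref{4.3}(viii), where one had to build units of $eRe$ out of units of $R$ to certify membership in $\Delta(eRe)$; for $Nil$ no such auxiliary step is needed.
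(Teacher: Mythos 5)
Your argument is correct and coincides with the paper's own proof: both write $a=c+n$, use $ea=a=ae$ and the centrality of $c$ to force $en=ne$, and then observe that $ec=ece\in C(eRe)$ and $en=ene\in Nil(eRe)$. Your version merely spells out the routine verifications (centrality of $ece$ in $eRe$ and nilpotency of $ene$) that the paper leaves implicit.
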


\begin{proof}
Letting $a\in eRe$, we then write $a=c+n$, where $c\in C(R)$ and $n\in Nil(R)$. Since $a\in eRe$, we have $a=ec+en=ce+ne$ and this, in turn, forces that $en=ne$. But, $ec\in C(eRe)$ and we easily conclude that $en=ne\in Nil(eRe)$. Finally, $eRe$ is a $CN$ ring, as claimed.
\end{proof}

A ring $R$ is called {\it semi-potent} if every one-sided ideal not contained in $J(R)$ contains a non-zero idempotent.

\begin{lemma}\label{4.5}
Let \( R \) be a semi-potent ring. Then, every $CN$ ring is $CJ$.
\end{lemma}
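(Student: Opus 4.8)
The plan is to show that in a semi-potent $CN$ ring $R$, the prime radical (equivalently, by the $CN$-structure, the nilradical) already coincides with $J(R)$; once $Nil(R)=J(R)$, any $CN$-decomposition $a=c+n$ is automatically a $CJ$-decomposition, and we are done. So the real content is the equality $Nil(R)\subseteq J(R)$.

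First I would pass to $\Rbar = R/J(R)$. By Proposition~\ref{4.3}-type reasoning (or directly from \cite{11}), a $CN$ ring has $\Rbar$ again $CN$, and $\Rbar$ is semi-potent because the semi-potent property is preserved modulo $J(R)$ (idempotents in a one-sided ideal of $\Rbar$ not contained in $J(\Rbar)=0$ come from the corresponding lift in $R$). Hence it suffices to prove that a semi-potent $CN$ ring with $J(R)=0$ is reduced. Suppose not: pick $0\neq n\in R$ with $n^2=0$. Then the right ideal $nR$ is not contained in $J(R)=0$, so by semi-potence there is a non-zero idempotent $e=nr\in nR$ for some $r\in R$. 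The idea now is to exploit that $CN$ rings are abelian on idempotents — indeed, if $e\in Id(R)$ and $x\in R$, then $ex(1-e)$ and $(1-e)xe$ are nilpotent and central in a $CN$ ring, hence zero (a central nilpotent in a ring with $J(R)=0$ must vanish, since $C(R)\cap Nil(R)\subseteq Nil_*(R)\subseteq J(R)=0$ in the $2$-primal situation, and $CN$ rings are $2$-primal by \cite{11}). Thus every idempotent of $R$ is central.

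Now $e=nr$ is a central idempotent. Then $en=ne$, and from $e=nr$ we get $ne = n(nr)=n^2 r=0$, so $e=en=ne=0$, contradicting $e\neq 0$. Therefore $R$ has no non-zero nilpotent of square zero, hence (standard) $Nil(R)=0$, i.e. $\Rbar$ is reduced. Lifting back, this yields $Nil(R)\subseteq J(R)$, and combined with the trivial inclusion $J(R)\subseteq Nil_*(R)\subseteq Nil(R)$ for $CN$ rings we obtain $Nil(R)=J(R)$. Consequently every $CN$-decomposition $a=c+n$ has $n\in Nil(R)=J(R)$, so it is a $CJ$-decomposition and $R$ is $CJ$.

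The main obstacle is the step asserting that all idempotents in a $CN$ ring are central together with the vanishing of central nilpotents modulo $J(R)$; this is where one must invoke the structural facts about $CN$ rings (their $2$-primality and the behaviour of $C(R)$), rather than anything about semi-potence — the semi-potence hypothesis is used only to manufacture a non-zero idempotent inside $nR$. I would double-check which of these facts are available as cited results from \cite{11} versus what must be re-derived here, and streamline accordingly; the rest is routine bookkeeping.
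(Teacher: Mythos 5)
Your overall strategy---pass to $\overline{R}=R/J(R)$ (which is still $CN$ and semi-potent), take $0\neq n\in\overline{R}$ with $n^2=0$, use semi-potence to extract a non-zero idempotent $e=nr\in n\overline{R}$, and kill it using centrality of idempotents---is viable, but as written it has a genuine gap at its key step: you assert that $ex(1-e)$ and $(1-e)xe$ are \emph{central} in a $CN$ ring and give no argument. Nilpotency of these elements is clear (their squares vanish), but their centrality is essentially equivalent to the abelianness of the ring, i.e.\ to the very fact you are trying to establish, so the justification as stated is circular. The fact itself is true---it is Lemma~\ref{4.6} of the paper, proved independently of Lemma~\ref{4.5} via $c-c^2\in Nil(R)$ and \cite[Lemma 3.5]{15}---so your proof does close once you invoke that lemma (or the corresponding statement from \cite{11}) in place of the $ex(1-e)$ argument. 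Two smaller points: the appeal to $2$-primality is unnecessary, since a central element $c$ with $c^k=0$ generates a nilpotent ideal $cR$, whence $c\in Nil_{*}(R)\subseteq J(R)$ in any ring; and the step ``$e=en$'' is unjustified (you only know $e=nr$)---argue instead $e=e^2=e(nr)=(en)r=(ne)r=(n^2r)r=0$. Finally, your closing claims that $J(R)\subseteq Nil_{*}(R)$ in $CN$ rings and hence $Nil(R)=J(R)$ are false in general (e.g.\ $\mathbb{Z}_{(p)}$ is commutative, hence $CN$, and semi-potent, with $Nil=0\neq J$), but they are harmless: the inclusion $Nil(R)\subseteq J(R)$ alone turns every $CN$-decomposition into a $CJ$-decomposition.

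For comparison, the paper also reduces the problem to showing that $\overline{R}=R/J(R)$ is reduced, but then proceeds differently: from a square-zero $0\neq x\in\overline{R}$ it invokes Levitzki's theorem \cite[Theorem 2.1]{16} to produce an idempotent $e$ with $e\overline{R}e\cong M_2(S)$ for some non-zero ring $S$; since corner rings of $CN$ rings are $CN$ (Lemma~\ref{4.9}) while $M_2(S)$ is never $CN$, this is a contradiction. Your route, once the abelianness of $CN$ rings is properly cited, is more elementary in that it uses only the definition of semi-potence rather than the $M_2(S)$-corner structure theorem, at the price of leaning on Lemma~\ref{4.6}; the paper's route avoids abelianness but leans on Levitzki's result.
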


\begin{proof}
It suffices to show that $Nil(R)\subseteq J(R)$. For this target, we establish that $\overline{R} = R/J(R)$ is reduced. Since $R$ is a semi-potent ring, we know that $\overline{R}$ is also a semi-potent (and even a potent) ring.

Assume now that $0 \neq x \in \overline{R}$ with $x^2 = 0$. Thus, \cite[Theorem 2.1]{16} applies to get that there exists $e \in \overline{R}$ such that $e\overline{R}e \cong M_2(S)$, where $S$ is a non-zero ring. However, since $\overline{R}$ is a $CN$ ring in accordance with Lemma \ref{4.9}), $e\overline{R}e$ is also $CN$. However, on the other hand, it is known that $M_2(S)$ is not a $CN$ ring, which leads to a contradiction. Therefore, $\overline{R}$ is a reduced ring, as promised.
\end{proof}

\begin{corollary}\label{4.8}
Let \( R \) be a semi-potent $CN$ ring. Then, $R/J(R)$ is commutative.
\end{corollary}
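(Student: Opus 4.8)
The plan is to obtain this as an immediate consequence of Lemma \ref{4.5}, so essentially no new work is needed. First I would invoke Lemma \ref{4.5}: since $R$ is a semi-potent $CN$ ring, it is in fact a $CJ$ ring, i.e., every element of $R$ can be written as a sum of a central element and an element of $J(R)$.

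Next I would simply unwind what the $CJ$ property says modulo $J(R)$. Let $\overline{R} = R/J(R)$ and pick an arbitrary $\bar{a} \in \overline{R}$, lifted to some $a \in R$. Writing $a = c + j$ with $c \in C(R)$ and $j \in J(R)$ and passing to the quotient, we get $\bar{a} = \bar{c} + \bar{j} = \bar{c}$, because $j \in J(R)$. Since the image of a central element under a ring homomorphism is central, $\bar{c} \in C(\overline{R})$, hence $\bar{a} \in C(\overline{R})$. As $\bar{a}$ was arbitrary, every element of $\overline{R}$ is central, which precisely means that $R/J(R)$ is commutative, as desired. (Alternatively, one could route the argument through the reducedness of $R/J(R)$ established inside the proof of Lemma \ref{4.5}, combined with the fact that a reduced $CN$ ring has no nonzero nilpotents and is therefore forced to be commutative; but the $CJ$ route above is the shortest.)

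The only substantive ingredient here is Lemma \ref{4.5} itself, whose proof relies on the structure theory of semi-potent rings — the result from \cite{16} that a non-reduced semi-potent ring has a corner isomorphic to a $2\times 2$ matrix ring over a nonzero ring — together with the stability of the $CN$ property under passing to corners (Lemma \ref{4.9}) and the fact that $M_2(S)$ is never $CN$. Granting that lemma, the present corollary follows in a couple of lines, so I do not anticipate any genuine obstacle in writing it up.
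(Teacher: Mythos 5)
Your proof is correct and takes essentially the same route as the paper: both obtain the statement as an immediate consequence of Lemma \ref{4.5}. The paper phrases the final step via the reducedness of $R/J(R)$ together with the fact that the quotient is again $CN$ (exactly your parenthetical alternative), whereas you pass the $CJ$ decomposition of $R$ to the quotient directly; the two versions are interchangeable two-line arguments.
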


\begin{proof}
With Lemma \ref{4.5} at hand, we conclude that $\overline{R} = R/J(R)$ is reduced and hence $Nil(\overline{R})=(0)$. Since $R$ is $CN$, it follows that $\overline{R}$ is $CN$ as well, and hence it is commutative.	
\end{proof}

\begin{lemma}\label{4.6}
Let \( R \) be a $CN$ ring. Then, $R$ is abelian.
\end{lemma}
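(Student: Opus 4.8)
The plan is to show that an arbitrary idempotent $e \in R$ is central, i.e. $er = re$ for every $r \in R$. The natural strategy is the standard Peirce-corner trick: to check that $(1-e)re = 0$ and $e r(1-e) = 0$ for all $r$, since together these two identities give $er = ere = re$.

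First I would fix $e \in Id(R)$ and $r \in R$, and set $x = (1-e)re$. The key observation is that $x^2 = (1-e)re(1-e)re = 0$, because $e(1-e) = 0$; so $x$ is a square-zero element, hence $x \in Nil(R)$. The idea now is to exploit the hypothesis that $R$ is $CN$ applied to a cleverly chosen element built from $e$ and $x$. A convenient choice is to look at the element $e + x$ (or, symmetrically, $e$ itself together with $x$): write $e + x = c + n$ with $c \in C(R)$ and $n \in Nil(R)$. Multiplying this decomposition on the left and right by $e$ and by $1-e$ and using the centrality of $c$, one extracts relations among $e$, $c$, $x$, $n$. For instance, since $ex = x$ and $xe = 0$, one has $e(e+x) = e + x$ while $(e+x)e = e$; comparing $e(c+n) = (c+n)e$ forces $x = en - ne$, placing $x$ inside the commutator $[e, n]$. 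Then I would square-analyse: because $n$ is nilpotent and lies in $Nil(R) \subseteq J(R)$ is \emph{not} yet available, I instead iterate the relation, or better, pass to the corner ring.

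The cleaner route, which I expect to be the main line of the argument, is to combine the earlier results: by Proposition~\ref{4.3} we do not directly have $CN$ there, but we can mimic its proof. Actually the slick argument is: suppose $e$ is a non-central idempotent, so there is $r$ with $(1-e)re \neq 0$; consider the subring generated appropriately, or simply apply the $CN$-decomposition to $1 + (1-e)re$. Write $1 + (1-e)re = c + n$. Since $(1-e)re$ is central modulo... — here is where the obstacle lies: one must convert "square-zero commutator element" into an outright contradiction using only that central-plus-nilpotent decompositions exist. I would do this by noting that $CN$ rings are closed under passing to corners $eRe$ and $(1-e)R(1-e)$ by Lemma~\ref{4.9}, and that $M_2(S)$ is never $CN$; if some idempotent $e$ failed to be central, a square-zero element $0 \neq x = (1-e)re$ with $ex = x$, $xe = 0$ would let one build an idempotent $f$ with $fRf \cong M_2(S)$ for a nonzero ring $S$ (this is exactly the mechanism used in the proof of Lemma~\ref{4.5} via \cite[Theorem 2.1]{16}), contradicting that $fRf$ is $CN$.

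So the steps, in order, are: (1) reduce "abelian" to "every idempotent is central"; (2) assume for contradiction that $e \in Id(R)$ is non-central and produce a nonzero square-zero $x$ with $ex = x = x + xe$, i.e. $xe = 0$; (3) invoke \cite[Theorem 2.1]{16} to obtain an idempotent $f$ with $fRf \cong M_2(S)$, $S \neq 0$; (4) apply Lemma~\ref{4.9} to conclude $fRf$, hence $M_2(S)$, is $CN$; (5) cite that $M_2(S)$ is not $CN$ — the desired contradiction. The main obstacle is step~(2)--(3): one needs the structural fact that a nonzero square-zero element forces a $2\times 2$ matrix corner, and one must make sure the $CN$ hypothesis (not semi-potence) alone is enough to run it — here it is, since a nonzero nilpotent of index $2$ already triggers \cite[Theorem 2.1]{16} without any potence assumption, the semi-potence in Lemma~\ref{4.5} being needed only to pass to $R/J(R)$, which we do not need here.
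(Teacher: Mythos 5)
There is a genuine gap at your steps (2)--(3). The implication ``a nonzero square-zero element forces an idempotent $f$ with $fRf\cong M_2(S)$, $S\neq 0$'' is \emph{not} true in an arbitrary ring: semipotence in Lemma~\ref{4.5} is not there merely to pass to $R/J(R)$, it is exactly what guarantees that the one-sided ideal generated by the square-zero element, not being contained in $J(R)$, contains a nonzero idempotent --- and that idempotent is the engine of the construction in \cite[Theorem 2.1]{16}. Without it the statement fails: $T_2(k)$ over a field $k$ (or the local ring $D_2(K)$) contains nonzero square-zero elements, even of your special form $x=(1-e)re$ with $e=e_{22}$, $x=e_{12}$, yet it has no corner isomorphic to $M_2(S)$ for any nonzero $S$ (here $eR(1-e)=0$, so there is no element pairing with $x$ to produce matrix units). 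Moreover, you cannot recover semipotence from the hypothesis, since every commutative ring (e.g.\ $\mathbb{Z}$) is $CN$ but need not be semipotent. So the contradiction in your step (5) is never reached, and the earlier exploratory computation with $e+x$ is abandoned rather than completed; as written, the argument does not prove the lemma.

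For comparison, the paper's proof avoids Levitzki's theorem entirely and works directly with the $CN$-decomposition of the idempotent itself: write $e=c+n$ with $c\in C(R)$, $n\in Nil(R)$; from $e=e^2$ deduce $c-c^2=n-n^2-2cn\in Nil(R)$; by \cite[Lemma 3.5]{15} there is a monic $f(t)\in\mathbb{Z}[t]$ with $f(c)$ idempotent and $c-f(c)\in Nil(R)$, and $f(c)$ is central because $c$ is; then $e-f(c)$ is nilpotent (a central nilpotent plus a nilpotent) and satisfies $(e-f(c))^3=e-f(c)$ since $f(c)$ commutes with $e$, whence $e=f(c)\in C(R)$. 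If you want to salvage your Peirce-corner strategy, you would need an argument producing the $M_2$-corner (or a direct contradiction from $x=(1-e)re\neq 0$) that uses only the existence of central-plus-nilpotent decompositions; the lifting-of-idempotents lemma of \cite{15} is precisely the tool the authors use to bypass that difficulty.
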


\begin{proof}
Suppose $e\in Id(R)$. So, $e=c+n$, where $c\in C(R)$ and $n\in Nil(R)$. Therefore, we write $c-c^2=n-n^2-2cn\in Nil(R)$. Thus, owing to \cite[Lemma 3.5]{15}, there exists a monic polynomial $f(t) \in \mathbb{Z}[t]$, where ${f(a)}^2=f(a)$ and $m=c-f\in Nil(R)$. Since $c\in C(R)$, we have $f, m \in C(R)$. So, $e-f=m+n\in Nil(R)$. On the other side, $(e-f)^3=e-f$. This means that $e=f\in C(R)$, whence $R$ is abelian, as stated.
\end{proof}

As a final consequence, we detect:

\begin{corollary}\label{4.7}
Let \( R \) be an exchange $CN$ ring. Then, $R$ is strongly clean.
\end{corollary}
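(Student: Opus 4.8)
The plan is to combine the fact, established in Corollary~\ref{4.7}'s hypotheses, that $R$ is an exchange $CN$ ring, with the structural results already proved about $CN$ rings. First I would recall that, by Lemma~\ref{4.6}, every $CN$ ring is abelian, so in particular the exchange ring $R$ is an abelian exchange ring. It is a classical fact (Nicholson, \cite{nic}) that abelian exchange rings are clean and, moreover, that in a clean ring with central idempotents every clean decomposition is automatically a strongly clean decomposition, since the idempotent involved commutes with everything. Thus the skeleton of the argument is: $CN \Rightarrow$ abelian (Lemma~\ref{4.6}); abelian $+$ exchange $\Rightarrow$ clean; abelian $+$ clean $\Rightarrow$ strongly clean.

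In more detail, I would first invoke Lemma~\ref{4.6} to get that all idempotents of $R$ are central. Since $R$ is exchange and abelian, a standard result gives that $R$ is clean: for each $a\in R$ the exchange property yields $e^2=e\in aR$ with $1-e\in(1-a)R$, and for abelian rings this $e$ can be used to produce a unit-plus-idempotent decomposition $a=e'+u$ with $e'$ idempotent and $u\in U(R)$ — this is exactly \cite[Proposition 1.8]{nic} (or the equivalent statement that abelian exchange rings are clean). Then, because $e'$ is central, it commutes with $u$, so $a=e'+u$ is in fact a strongly clean decomposition. Carrying this out for every $a\in R$ shows $R$ is strongly clean.

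The only genuine content beyond citing Lemma~\ref{4.6} is the passage "abelian exchange $\Rightarrow$ clean," which I would attribute to Nicholson's work on exchange rings and lifting idempotents \cite{nic}; everything else is a one-line observation about central idempotents commuting with the unit part of a clean decomposition. I expect the main (minor) obstacle to be phrasing this cleanly without re-proving the exchange-implies-clean theorem: one should be careful to note that it is the \emph{abelian} hypothesis that upgrades "clean" to "strongly clean," and that abelianness here is supplied precisely by Lemma~\ref{4.6} applied to the $CN$ ring $R$. A compact write-up would read roughly:

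\begin{proof}
By Lemma~\ref{4.6}, every idempotent of $R$ is central. Since $R$ is an abelian exchange ring, it is clean by \cite{nic}. Now take any $a\in R$ and write $a=e+u$ with $e\in Id(R)$ and $u\in U(R)$. As $e$ is central, $eu=ue$, so $a=e+u$ is a strongly clean decomposition. Hence $R$ is strongly clean, as claimed.
\end{proof}
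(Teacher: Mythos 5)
Your argument is correct and is essentially the paper's intended one: the corollary is stated as a direct consequence of Lemma~\ref{4.6} (every $CN$ ring is abelian), combined with Nicholson's result that abelian exchange rings are clean and the observation that a clean decomposition with a central idempotent is automatically strongly clean. Nothing is missing; your citation of \cite{nic} for the abelian-exchange-implies-clean step is exactly the right source.
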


\section{Some extensions of $C\Delta$ rings}

We are here concentrated on certain non-trivial extensions of \(C\Delta\) rings of matrix type. Before stating and proving the main results, we deal with a few preliminaries.

\begin{lemma}\label{2.20}
Let \(R\) be a commutative ring. For any \(n \geq 2\), \(A \in M_n(R)\) has a \(C\Delta\)-decomposition if, and only if, \(A - cI_n \in M_n(J(R))\) for some \(c \in R\).
\end{lemma}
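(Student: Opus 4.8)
The plan is to exploit the fact that over a commutative ring $R$ the set $C(R)+\Delta(R)$ inside $M_n(R)$ is well understood: by Lemma~\ref{2.3} we have $\Delta(M_n(R)) = J(M_n(R)) = M_n(J(R))$ for $n\geq 2$, and the only "obvious" central elements of $M_n(R)$ are the scalar matrices $cI_n$ with $c\in C(R)=R$. So the statement should reduce to showing that the center of $M_n(R)$ is exactly $\{cI_n : c\in R\}$, together with the substitution $\Delta(M_n(R))=M_n(J(R))$.

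First I would recall the standard fact that for a commutative ring $R$ (indeed any ring) the center of $M_n(R)$ consists precisely of the scalar matrices $cI_n$ with $c\in C(R)$; since $R$ is commutative here, $C(M_n(R)) = \{cI_n : c\in R\}$. This is proved by the usual argument of commuting a putative central matrix with the matrix units $e_{ij}$, forcing all off-diagonal entries to vanish and all diagonal entries to coincide and to be central in $R$. Second, I would invoke Lemma~\ref{2.3} to identify $\Delta(M_n(R))$ with $M_n(J(R))$.

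With these two identifications in hand, the equivalence is immediate. If $A$ has a $C\Delta$-decomposition $A = Z + W$ with $Z\in C(M_n(R))$ and $W\in \Delta(M_n(R))$, then $Z = cI_n$ for some $c\in R$ and $W\in M_n(J(R))$, so $A - cI_n = W \in M_n(J(R))$. Conversely, if $A - cI_n \in M_n(J(R))$ for some $c\in R$, then writing $A = cI_n + (A - cI_n)$ exhibits $A$ as the sum of the central (scalar) matrix $cI_n$ and the element $A - cI_n \in M_n(J(R)) = \Delta(M_n(R))$, which is exactly a $C\Delta$-decomposition.

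I do not anticipate a genuine obstacle here: the whole content is the computation of the center of $M_n(R)$ and the already-established Lemma~\ref{2.3}. The only mild point to be careful about is the hypothesis $n\geq 2$, which is needed precisely so that Lemma~\ref{2.3} applies (giving $\Delta(M_n(R)) = M_n(J(R))$); for $n=1$ the statement would be vacuous/trivial anyway since every commutative ring is $C\Delta$ by Example~\ref{2.10}. So the proof is essentially a two-line assembly of the center computation and Lemma~\ref{2.3}.
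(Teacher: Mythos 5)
Your proposal is correct and follows essentially the same route as the paper: identify the center of $M_n(R)$ with the scalar matrices $cI_n$ and use Lemma~\ref{2.3} to replace $\Delta(M_n(R))$ by $M_n(J(R))$, after which both directions are immediate. In fact your write-up makes explicit the appeal to Lemma~\ref{2.3} that the paper's proof leaves implicit, so nothing is missing.
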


\begin{proof}
Assume that \(A \in M_n(R)\) has a \(C\Delta\)-decomposition. By assumption, there exists \(c \in R\) such that \(A - cI_n \in M_n(J(R))\). Note that, for any \(B \in M_n(R)\), \(B\) is central in \(M_n(R)\) precisely when there exists \(c \in R\) such that \(B = cI_n\), as required.

Conversely, suppose that, for any \(A \in M_n(R)\), there exists \(c \in R\) such that \(A - cI_n \in M_n(J(R))\). As \(cI_n\) is central in \(M_n(R)\), it follows that \(A \in M_n(R)\) has a \(C\Delta\)-decomposition, as needed.
\end{proof}

\begin{corollary}\label{2.21}
Let \(R\) be a ring and \(n\) a positive integer. Then, \(A \in M_n(R)\) has a \(C\Delta\)-decomposition if, and only if, for each \(p \in GL_n(R)\), \(pAp^{-1} \in M_n(R)\) has a \(C\Delta\)-decomposition.
\end{corollary}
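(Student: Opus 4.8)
The plan is to deduce this statement as an immediate instance of Proposition~\ref{2.19} applied to the matrix ring $M_n(R)$ in place of $R$. The only facts one needs to record are that $M_n(R)$ is itself a ring with identity (so Proposition~\ref{2.19} is applicable to it) and that its group of units is $U(M_n(R)) = GL_n(R)$, the group of invertible $n \times n$ matrices over $R$.

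Concretely, first I would observe that the notion of a $C\Delta$-decomposition in Corollary~\ref{2.21} refers to the ambient ring $M_n(R)$: an element $A \in M_n(R)$ has a $C\Delta$-decomposition precisely when $A = C + D$ with $C \in C(M_n(R))$ and $D \in \Delta(M_n(R))$. Then I would invoke Proposition~\ref{2.19} with the ring taken to be $S := M_n(R)$ and the element taken to be $A \in S$: that proposition yields that $A$ has a $C\Delta$-decomposition in $S$ if, and only if, $pAp^{-1}$ has a $C\Delta$-decomposition in $S$ for every $p \in U(S)$. Finally, substituting $U(S) = U(M_n(R)) = GL_n(R)$ translates this equivalence word for word into the asserted statement about conjugation by elements of $GL_n(R)$.

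There is essentially no obstacle here: the entire content has already been carried out in the proof of Proposition~\ref{2.19} (conjugation fixes central elements and maps $\Delta$ onto $\Delta$ since it is a ring automorphism), and the corollary is just the specialization of that argument to matrix rings. The only point worth a sentence of care is the identification $U(M_n(R)) = GL_n(R)$, which is standard. Hence the proof will be one or two lines.
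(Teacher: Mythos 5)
Your proposal is correct and matches the paper's intent exactly: the corollary is stated without proof precisely because it is the specialization of Proposition~\ref{2.19} to the ring $M_n(R)$, using $U(M_n(R)) = GL_n(R)$. Nothing further is needed.
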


\begin{remark}
Since \( \Delta(M_n(R)) = J(M_n(R)) \) for any \( n \geq 2 \) and \( M_n(R)/J(M_n(R)) \cong M_n(R/J(R)) \), we notice that \( M_n(R) \) is never a \(C \Delta\) ring for any \( n \geq 2 \).
\end{remark}

\begin{proposition}\label{2.25}
Let \( R \) be a commutative ring. Then, \( T_2(R) \) is a \(C\Delta\) ring if, and only if, for any \( a, b \in R \), there exists \( c \in R \) such that \( a - c, b - c \in \Delta(R) \).
\end{proposition}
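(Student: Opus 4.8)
The plan is to make the two ingredients of a $C\Delta$-decomposition inside $T_2(R)$ completely explicit — namely the centre $C(T_2(R))$ and the subring $\Delta(T_2(R))$ — and then to read the claimed equivalence directly off these descriptions. This mirrors the strategy already used for $M_n(R)$ in Lemma~\ref{2.20}.

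First I would compute $C(T_2(R))$. If $\left(\begin{smallmatrix} a & x \\ 0 & b\end{smallmatrix}\right)$ is central, then commuting it with the matrix unit $e_{12}$ forces $a=b$, and commuting the resulting matrix with $e_{22}$ forces $x=0$; conversely, since $R$ is commutative, every scalar matrix $cI_2$ with $c\in R$ commutes with all of $T_2(R)$. Hence $C(T_2(R))=\{cI_2 : c\in R\}$, and this is the only place where commutativity of $R$ is genuinely used.

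Next, and this is the only step with any real content, I would compute $\Delta(T_2(R))$. Recall that the units of $T_2(R)$ are precisely the upper triangular matrices whose two diagonal entries are units of $R$. Given $A=\left(\begin{smallmatrix} a & x \\ 0 & b\end{smallmatrix}\right)$ and a unit $U=\left(\begin{smallmatrix} u & s \\ 0 & v\end{smallmatrix}\right)$ of $T_2(R)$ (so $u,v\in U(R)$ and $s\in R$ arbitrary), a direct multiplication gives $I_2-UA=\left(\begin{smallmatrix} 1-ua & * \\ 0 & 1-vb\end{smallmatrix}\right)$, which is a unit of $T_2(R)$ if and only if $1-ua$ and $1-vb$ are units of $R$. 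Letting $U$ run over all units of $T_2(R)$ amounts to letting $u$ and $v$ run independently over $U(R)$, the value of $s$ being irrelevant to the invertibility of $I_2-UA$; so the defining condition ``$I_2-UA\in U(T_2(R))$ for every unit $U$'' for membership of $A$ in $\Delta(T_2(R))$ collapses to ``$a\in\Delta(R)$ and $b\in\Delta(R)$'', with no restriction on $x$. Therefore
\[
\Delta(T_2(R)) = \left\{ \left(\begin{smallmatrix} a & x \\ 0 & b\end{smallmatrix}\right) : a,b\in\Delta(R),\ x\in R \right\}.
\]

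Finally I would assemble the two computations. An element $A=\left(\begin{smallmatrix} a & x \\ 0 & b\end{smallmatrix}\right)\in T_2(R)$ admits a $C\Delta$-decomposition if and only if $A-cI_2\in\Delta(T_2(R))$ for some $c\in R$, and by the displayed description of $\Delta(T_2(R))$ this happens exactly when $a-c\in\Delta(R)$ and $b-c\in\Delta(R)$ — the off-diagonal entry $x$ being automatically in $R$ and hence harmlessly absorbed into the $\Delta$-part. Quantifying over all $A\in T_2(R)$ yields precisely the statement that $T_2(R)$ is $C\Delta$ if and only if for all $a,b\in R$ there is some $c\in R$ with $a-c,b-c\in\Delta(R)$. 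The main (and modest) obstacle is the $\Delta(T_2(R))$ computation; the point to watch there is that the arbitrary off-diagonal entries of the units of $T_2(R)$ do not affect invertibility of $I_2-UA$, so the membership condition genuinely decouples into the two diagonal conditions $a,b\in\Delta(R)$. One could alternatively run the $\Delta$-computation through the Leroy--Matczuk characterization of $\Delta$ as the largest subring of $J$ closed under multiplication by units, but the elementwise check above is the shortest route.
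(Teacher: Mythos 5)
Your proposal is correct and follows essentially the same route as the paper: identify $C(T_2(R))$ as the scalar matrices and $\Delta(T_2(R))$ as the upper triangular matrices with both diagonal entries in $\Delta(R)$, then read off the equivalence by absorbing the off-diagonal entry into the $\Delta$-part. You merely make explicit the two computations that the paper's proof states without verification, so there is nothing to correct.
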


\begin{proof}
Let \( A = \begin{pmatrix} a & 0 \\ 0 & b \end{pmatrix} \in T_2(R) \). So, $A$ has a \(C\Delta\)-decomposition if, and only if, there exist \( C = \begin{pmatrix} c & 0 \\ 0 & c \end{pmatrix} \in C(T_2(R)) \) and \( D = \begin{pmatrix} x & y \\ 0 & z \end{pmatrix} \in \Delta (T_2(R)) \) such that \( A = C + D \). Thus, \( D \in \Delta(T_2(R)) \) if, and only if, \( x, z \in \Delta(R) \). Also, \( A = C + D \) is a \(C\Delta\)-decomposition if, and only if, there exists \( c \in R \) such that \( A - cI_2 \in T_2(R)\) if, and only if, \( a - c, b - c \in \Delta(R) \), as asked for.
\end{proof}

\begin{example}\label{2.26}
Consider the ring \( R = \mathbb{Z} \). So, \( R \) is \(C\Delta\), but \( T_2(R) \) is not \(C\Delta\). Consider the ring \( A = \begin{pmatrix} 4 & 0 \\ 0 & 5 \end{pmatrix} \in T_2(R) \). Then, one inspects that there is no \( c \in R \) such that both \( 4 - c \) and \( 5 - c \) belong to \( \Delta(R) \). So, we deduce that \( T_2(R) \) is not \(C\Delta\), as pursued.
\end{example}

Let $R$ be a ring and suppose $\alpha : R \to R$ is a ring endomorphism. Standardly, $R[[x; \alpha]]$ denotes the ring of {\it skew formal power series} over $R$; that is, all formal power series in $x$ with coefficients from $R$ and with multiplication defined by $xr = \alpha(r)x$ for all $r \in R$. In particular, $R[[x]] = R[[x; 1_R]]$ is the {\it ring of formal power series} over $R$.

\medskip

We now come to the following criterion.

\begin{proposition}\label{2.13}
Let $R$ be a ring. The ring $R[[x; \alpha]]$ is $C\Delta$ if, and only if, $R$ is $C\Delta$.
\end{proposition}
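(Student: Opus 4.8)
The plan is to run the whole argument through the ``constant term'' homomorphism. Set $S=R[[x;\alpha]]$ and let $\pi\colon S\to R$ be the map sending $f=\sum_{i\ge 0}a_i x^i$ to $a_0$; since the degree-zero part of a product of skew power series is the product of the degree-zero parts, $\pi$ is a surjective ring homomorphism, and its kernel $I=\{f\in S:a_0=0\}$ is an ideal contained in $J(S)$. The inclusion $I\subseteq J(S)$ is the one computational point: for $f\in I$ and any $g\in S$ the product $gf$ again lies in $I$, so $(gf)^n$ has no coefficients in degrees below $n$, the series $\sum_{n\ge 0}(gf)^n$ makes sense and is a two-sided inverse of $1-gf$; hence $1-gf\in U(S)$ for all $g$, i.e.\ $f\in J(S)$. (The same bookkeeping shows $f\in U(S)\iff a_0\in U(R)$, whence in fact $J(S)=\pi^{-1}(J(R))$, but only $I\subseteq J(S)$ is needed below.)

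With this in hand the ``only if'' direction is immediate: if $S$ is $C\Delta$, then, as $I$ is an ideal of $S$ contained in $J(S)$, Proposition~\ref{4.3}(i) gives that $S/I\cong R$ is $C\Delta$.

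For the ``if'' direction I would first pin down $\Delta(S)$. Applying Corollary~\ref{2.9} to the ideal $I\subseteq J(S)$ yields $(I+\Delta(S))/I=\Delta(S/I)\cong\Delta(R)$; since $I\subseteq J(S)\subseteq\Delta(S)$ we have $I+\Delta(S)=\Delta(S)$, so $\Delta(S)=\pi^{-1}(\Delta(R))=\{\sum_i a_i x^i:a_0\in\Delta(R)\}$. Now take an arbitrary $f=\sum_i a_i x^i\in S$. Since $R$ is $C\Delta$, write $a_0=c+d$ with $c\in C(R)$ and $d\in\Delta(R)$, and regard $c$ as the constant series in $S$; then $f-c$ has constant term $d\in\Delta(R)$, hence $f-c\in\Delta(S)$ by the description just obtained, and $f=c+(f-c)$ is the desired $C\Delta$-decomposition of $f$.

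The step I expect to cause trouble is the very last one: one needs the lifted element $c$ to be \emph{central} in $S$. A constant series $c$ commutes with every $r\in R\subseteq S$ exactly when $c\in C(R)$, but it commutes with $x$ only when $\alpha(c)=c$. For the ordinary power series ring ($\alpha=1_R$) centrality is automatic and the proof is complete as written; in the genuinely skew case one must also be able to choose the central summand of the $C\Delta$-decomposition of $a_0$ among the $\alpha$-fixed central elements, i.e.\ know that $R=\bigl(C(R)\cap\{r\in R:\alpha(r)=r\}\bigr)+\Delta(R)$. Reconciling this with the hypothesis that $R$ is $C\Delta$ — or isolating the extra condition on $\alpha$ under which it holds — is the real heart of the skew version of the statement.
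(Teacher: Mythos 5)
Your ``only if'' direction is essentially the paper's own argument: factor out the ideal of series with zero constant term, observe that it lies in the Jacobson radical via the geometric-series computation, and invoke Proposition~\ref{4.3}(i). For the ``if'' direction you take a genuinely different and in fact tidier route: using Corollary~\ref{2.9} you identify $\Delta(R[[x;\alpha]])$ with the full preimage of $\Delta(R)$ under the constant-term map $\pi$, so that only the constant coefficient of $f$ needs to be split and the entire tail is absorbed into the $\Delta$-part. The paper instead decomposes every coefficient, writing $f=g+k$ with $g\in C(R)[[x;\alpha]]$ and $k\in\Delta(R)[[x;\alpha]]$, and asserts $C(R)[[x;\alpha]]\subseteq C(R[[x;\alpha]])$ and $\Delta(R)[[x;\alpha]]=\Delta(R[[x;\alpha]])$; your description of $\Delta$ renders the second assertion unnecessary (as stated it is only an inclusion anyway), and your argument needs strictly less.

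The centrality problem you flag at the end is real, but it is not a defect of your proof relative to the paper's: the paper's inclusion $C(R)[[x;\alpha]]\subseteq C(R[[x;\alpha]])$ suffers from exactly the same issue, since a central constant $c$ commutes with $x$ only when $\alpha(c)=c$. In fact the ``if'' direction fails for general $\alpha$. Take $R=\mathbb{C}$ and $\alpha$ complex conjugation, and set $S=\mathbb{C}[[x;\alpha]]$. Then $R$ is commutative, hence $C\Delta$; on the other hand $\Delta(\mathbb{C})=0$, so your Corollary~\ref{2.9} computation gives $\Delta(S)=Sx$, while any $f=\sum a_i x^i$ commuting with $x$ must have $\alpha(a_i)=a_i$, so every element of $C(S)$ has real constant term. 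Hence the constant series $i$ does not lie in $C(S)+\Delta(S)$ and $S$ is not $C\Delta$. So your proof is complete exactly in the generality in which the statement is actually true without extra hypotheses on $\alpha$ --- for instance $\alpha=1_R$, which is all the subsequent corollary on $R[[x]]$ uses --- and the condition you isolate (that the central summands can be chosen $\alpha$-fixed, e.g.\ when $\alpha$ fixes $C(R)$ pointwise) is precisely what is needed to salvage the skew case; the paper's proof assumes it silently.
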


\begin{proof}
Let $R[[x; \alpha]]$ be $C\Delta$. Consider $I:= R[[x; \alpha]]x$. Then, one verifies that $I$ is an ideal of $R[[x; \alpha]]$. Note also that a routine calculation leads to $J(R[[x; \alpha]])=J(R)+I$, so $I\subseteq J(R[[x; \alpha]])$. Since $R[[x; \alpha]]/I\cong R$, we derive with the aid of Proposition \ref{4.3}(i) that $R$ is $C\Delta$.

Conversely, let $R$ be $C\Delta$ and write $f(x) = a_0 + a_1 x + a_2 x + \dots \in R[[x; \alpha]]$. Then, $a_i=c_i+r_i$, where $c_i\in C(R)$ and $r_i\in \Delta(R)$ for every $i$. Thus, we have $f(x)=g(x)+k(x)$, where $$g(x)=c_0 + c_1 x + c_2 x + \dots \in C(R)[[x; \alpha]] \subseteq C(R[[x; \alpha]])$$ and $$k(x)=r_0 + r_1 x + r_2 x + \dots \in \Delta(R)[[x; \alpha]]=\Delta(R[[x; \alpha]]).$$ This enables us that $R[[x; \alpha]]$ is $C\Delta$, as expected.
\end{proof}

Our next automatic consequence sounds like this.

\begin{corollary}
A ring $R[[x]]$ is $C\Delta$ if, and only if, so is $R$.
\end{corollary}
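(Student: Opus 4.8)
The plan is to obtain this corollary as the $\alpha = 1_R$ instance of Proposition~\ref{2.13}. By definition the ordinary formal power series ring satisfies $R[[x]] = R[[x; 1_R]]$, where $1_R \colon R \to R$ is the identity endomorphism. Since $1_R$ is a perfectly good ring endomorphism, and since Proposition~\ref{2.13} imposes no restriction on $R$ or on $\alpha$ beyond their being, respectively, a ring and a ring endomorphism (note that, in contrast to Proposition~\ref{4.12}, there is no $2$-primality or $\alpha$-compatibility hypothesis to verify), the proposition applies directly: $R[[x; 1_R]]$ is $C\Delta$ if, and only if, $R$ is $C\Delta$. Rewriting $R[[x; 1_R]]$ as $R[[x]]$ finishes the argument. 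There is essentially no obstacle here; the only point meriting a moment's thought is confirming that the hypotheses of Proposition~\ref{2.13} are satisfied with no extra work, which they are.

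If a \emph{self-contained} argument were preferred, one would simply transcribe the two halves of the proof of Proposition~\ref{2.13} with $\alpha$ suppressed. For necessity: set $I := R[[x]]x$, check that it is an ideal of $R[[x]]$ with $I \subseteq J(R[[x]]) = J(R) + I$ and $R[[x]]/I \cong R$, and conclude via Proposition~\ref{4.3}(i) that $R$ is $C\Delta$. For sufficiency: write $f = \sum_i a_i x^i \in R[[x]]$, decompose each coefficient $a_i = c_i + r_i$ with $c_i \in C(R)$ and $r_i \in \Delta(R)$, and split $f = \sum_i c_i x^i + \sum_i r_i x^i$, observing that the first summand lies in $C(R)[[x]] \subseteq C(R[[x]])$ and the second in $\Delta(R)[[x]] = \Delta(R[[x]])$.
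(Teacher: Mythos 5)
Your proposal is correct and is exactly how the paper obtains this corollary: it is stated as an automatic consequence of Proposition~\ref{2.13} by taking $\alpha = 1_R$, so that $R[[x]] = R[[x;1_R]]$. The self-contained transcription you sketch is just the same proof with $\alpha$ suppressed, so there is nothing further to add.
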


We now have all the ingredients necessary to attack the truthfulness of the following main statement.

\begin{theorem}\label{4.10}
Let $R$ be a $2$-primal ring and let $\alpha$ be an endomorphism of $R$ such that $R$ is $\alpha$-compatible. The following are equivalent:
\begin{enumerate}
\item
$R$ is a $CN$ ring.
\item
$R[x; \alpha]$ is a $C\Delta$ ring.
\item
$R[x; \alpha]$ is a $CJ$ ring.
\item
$R[x; \alpha]$ is a $CN$ ring.
\end{enumerate}
\end{theorem}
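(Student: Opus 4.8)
The plan is to establish the cycle $(i) \Rightarrow (iv) \Rightarrow (iii) \Rightarrow (ii) \Rightarrow (i)$, exploiting throughout the structural description of $\Delta(R[x;\alpha])$ from Proposition \ref{4.12}, namely $\Delta(R[x;\alpha]) = \Delta(R) + Nil_*(R[x;\alpha])x$, together with the identity $Nil_*(R)[x;\alpha] = Nil_*(R[x;\alpha])$ that holds because $R$ is $2$-primal (this is the fact invoked in the proof of Proposition \ref{4.12}, via \cite[Lemma 2.2]{18}). The first thing I would record is that $2$-primality of $R$ passes to $R[x;\alpha]$ under $\alpha$-compatibility, so that $Nil(R[x;\alpha]) = Nil_*(R[x;\alpha]) = Nil_*(R)[x;\alpha]$; combined with the fact that for $2$-primal $\alpha$-compatible rings the units of $R[x;\alpha]$ are exactly $U(R) + Nil_*(R[x;\alpha])x$ (as used in Proposition \ref{4.12}), this gives me tight control over all three of $U$, $Nil$, $J$ and $\Delta$ in the skew polynomial ring.

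For $(i) \Rightarrow (iv)$: given $f = \sum a_i x^i \in R[x;\alpha]$, write $a_0 = c + n$ with $c \in C(R)$, $n \in Nil(R)$ using that $R$ is $CN$; then $f = cI + \bigl(n + a_1 x + a_2 x^2 + \cdots\bigr)$. The element $cI = c$ is central in $R[x;\alpha]$ provided $\alpha(c) = c$, which follows from $\alpha$-compatibility applied to $c$ being central — I would check this small point carefully (central elements of a $2$-primal $\alpha$-compatible ring are fixed by $\alpha$, or at least the decomposition can be arranged so that the central part commutes with $x$; if $\alpha$ does not fix $c$ one may instead absorb the discrepancy into the nilpotent tail, since $c - \alpha(c)$ lands in the relevant nil ideal). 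The tail $n + \sum_{i\geq 1} a_i x^i$ must be shown nilpotent: since $R$ is $2$-primal and $CN$, one knows $Nil(R)$ is an ideal, in fact $Nil(R) = Nil_*(R)$, and each $a_i$ for $i \geq 1$ need not be nilpotent — so this is actually the delicate part, and here I would instead argue that being $CN$ forces $R/Nil_*(R)$ to be commutative reduced, hence every $a_i - (\text{its central representative})$ is nilpotent, and reassemble. Then $(iv) \Rightarrow (iii)$ because in a $2$-primal ring $J(R[x;\alpha]) \supseteq Nil_*(R[x;\alpha])$ contains all nilpotents (one checks $Nil(R[x;\alpha]) \subseteq J(R[x;\alpha])$ using $\alpha$-compatibility), turning any $CN$-decomposition into a $CJ$-decomposition; and $(iii) \Rightarrow (ii)$ is immediate from $J \subseteq \Delta$ (Example \ref{2.10}(iv), or directly).

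The main obstacle, and the step I would spend the most care on, is $(ii) \Rightarrow (i)$. Assume $R[x;\alpha]$ is $C\Delta$. Take $a \in R$ and view it as a constant polynomial; then $a = g(x) + h(x)$ with $g(x) \in C(R[x;\alpha])$ and $h(x) \in \Delta(R[x;\alpha]) = \Delta(R) + Nil_*(R[x;\alpha])x$ by Proposition \ref{4.12}. Comparing coefficients: the constant term gives $a = g_0 + h_0$ where $h_0 \in \Delta(R)$ and $g_0$ is the constant term of $g(x)$, which is central in $R$ (the constant term of a central power/polynomial is central). So $a \in C(R) + \Delta(R)$, proving $R$ is $C\Delta$. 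But I need $CN$, which is stronger. To upgrade, I would use Proposition \ref{4.3}(vi): since $R$ is $C\Delta$ it follows that $Nil(R) \subseteq \Delta(R)$, and moreover $R/J(R)$ is reduced (Proposition \ref{4.3}(vii)). Combined with $2$-primality — which says $Nil(R) = Nil_*(R) \subseteq J(R)$ once we know $R/Nil_*(R)$ is reduced, hence $Nil(R) = J(R)$ here — the $C\Delta$-decomposition $a = c + r$ with $r \in \Delta(R)$ can be refined: I would show $\Delta(R) = J(R)$ for such $R$ (using Corollary \ref{2.8}: $\Delta(R/J(R)) = 0$ because $R/J(R)$ is reduced commutative, so $\Delta = J$), whence $r \in J(R) = Nil(R)$ by $2$-primality plus reducedness of $R/J(R)$, giving a genuine $CN$-decomposition $a = c + r$ with $r$ nilpotent. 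That chain — $C\Delta \Rightarrow R/J(R)$ reduced $\Rightarrow \Delta(R) = J(R)$ and ($2$-primal) $\Rightarrow J(R) = Nil(R) \Rightarrow$ every $C\Delta$-decomposition is a $CN$-decomposition — is the crux, and verifying that $R/J(R)$ reduced together with $2$-primal indeed forces $J(R) = Nil(R)$ (rather than merely $Nil_* \subseteq J$) is the point where I expect to have to be most careful.
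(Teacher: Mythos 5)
The step that fails is your upgrade in $(ii)\Rightarrow(i)$, which you correctly identify as the crux. Decomposing the constant polynomial $a$ only yields $a\in C(R)+\Delta(R)$, i.e. that $R$ is $C\Delta$, and your proposed bridge to $CN$ --- ``$R/J(R)$ reduced, hence $\Delta(R)=J(R)$, hence (by $2$-primality) $J(R)=Nil(R)$'' --- breaks at the last equality: $2$-primality gives $Nil(R)=Nil_{*}(R)\subseteq J(R)$, never the reverse inclusion. Concretely, take $R=\mathbb{Z}_{(p)}$ (or $k[[t]]$) with $\alpha=\mathrm{id}$: then $R$ is $2$-primal and $\alpha$-compatible, $R[x]$ is commutative and hence $C\Delta$, $R/J(R)$ is reduced and $\Delta(R)=J(R)=p\mathbb{Z}_{(p)}$, yet $Nil(R)=0$. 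So $J(R)=Nil(R)$ is simply not a consequence of the hypotheses, and a $C\Delta$-decomposition of the constant $a$ cannot in general be refined into a $CN$-decomposition along the route you describe (in the example $R$ is $CN$ only because it is commutative, not because $\Delta(R)$ is nil).

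The missing idea --- and the way the paper argues --- is to apply the $C\Delta$-hypothesis in $R[x;\alpha]$ not to $a$ but to the element $ax$: writing $ax=f+g$ with $f=\sum c_i x^i$ central and $g\in\Delta(R[x;\alpha])=\Delta(R)+Nil_{*}(R[x;\alpha])x$ (Proposition \ref{4.12}), the coefficient of $x$ in $g$ lies in $Nil_{*}(R)\subseteq Nil(R)$, so comparing coefficients of $x$ gives $a=c_1+d_1$ with $d_1$ nilpotent, and centrality of $c_1$ is extracted from $rf=fr$ for all $r\in R$ by comparing coefficients. The nilpotence must come from the positive-degree coefficients of $\Delta$-elements, not from an alleged equality $J(R)=Nil(R)$. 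Concerning the forward direction, your worry about whether the central coefficients commute with $x$ (i.e. $\alpha(c_i)=c_i$, and also $c_i\alpha^{i}(r)=rc_i$) is legitimate, but the paper's own proof of $(i)\Rightarrow(ii)$ silently treats $\sum c_i x^i$ as central in $R[x;\alpha]$ as well, so this is a shared gap rather than a divergence from the paper; note, however, that your suggested repair of absorbing $c-\alpha(c)$ into the nilpotent tail is unjustified, since $c-\alpha(c)$ need not be nilpotent.
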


\begin{proof}
As $R$ is $2$-primal, $R/Nil_{*}(R)$ is reduced, and so we have \[J(R[x; \alpha]) = \operatorname{Nil}_*(R[x; \alpha]) = \operatorname{Nil}_*(R)[x; \alpha].\] Hence, points (iii) and (iv) are equivalent.
	
\((i) \Rightarrow (ii)\). Supposing that \( f = \sum_{i=0}^{n} a_i x^i \in R[x; \alpha]\), we have $a_i=c_i+n_i$, where $c_i\in C(R)$ and $n_i\in Nil(R)=Nil_*(R)$ for every $i$. Moreover, we know that $n_0\in Nil(R)\subseteq Nil_*(R)\subseteq J(R)\subseteq \Delta(R)$. Therefore, taking into account Proposition \ref{4.12}, we write $$f=\sum_{i=0}^{n} c_i x^i+n_0+\sum_{i=1}^{n} n_i x^i\in C(R)+\Delta(R)+Nil_*(R)[x; \alpha]x\subseteq C(R)+\Delta(R[x; \alpha])\subseteq$$
$$C(R[x; \alpha])+\Delta(R[x; \alpha]),$$ as required.

\((ii) \Rightarrow (i)\). For every $a\in R$, we have that $ax=\sum_{i=0}^{n} c_i x^i+\sum_{i=0}^{n} d_i x^i$. Thus, $a=c_0+d_0$, where $d_0\in Nil_*(R)$ owing to Proposition \ref{4.12}. Now, it suffices to demonstrate that $c_0\in C(R)$. For this aim, we set $f:=\sum_{i=0}^{n} c_i x^i$. Since we know $rf=fr$ for every $r\in R$, it gives that $rc_0=c_0r$. So, $c_0\in C(R)$ and hence $R$ is a $CN$ ring, as required.

\((i) \iff (iv)\). With a proof very close to the previous implication, the desired result can easily be obtained.
\end{proof}

As a direct consequence, we yield:

\begin{corollary}
Let $R$ be a $2$-primal ring. Then, the following are equivalent:
\begin{enumerate}
\item
$R$ is a $CN$ ring.
\item
$R[x]$ is a $C\Delta$ ring.
\item
$R[x]$ is a $CJ$ ring.
\item
$R[x]$ is a $CN$ ring.
\end{enumerate}
\end{corollary}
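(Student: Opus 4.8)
The final statement to prove is the corollary stating that, for a $2$-primal ring $R$, the conditions that $R$ is $CN$, that $R[x]$ is $C\Delta$, that $R[x]$ is $CJ$, and that $R[x]$ is $CN$ are all equivalent.

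The plan is to deduce this directly from Theorem~\ref{4.10} by specializing the endomorphism $\alpha$ to the identity map $1_R$. First I would observe that $R[x] = R[x; 1_R]$ by definition, so that the skew polynomial ring in Theorem~\ref{4.10} collapses to the ordinary polynomial ring once $\alpha = 1_R$. The only thing that needs checking is that the hypotheses of Theorem~\ref{4.10} are met in this special case: $R$ is assumed $2$-primal (given), and we need $R$ to be $1_R$-compatible. But $1_R$-compatibility asks that $ab = 0 \iff a\cdot 1_R(b) = 0$, i.e., $ab = 0 \iff ab = 0$, which is a tautology; hence every ring is $1_R$-compatible. So both hypotheses of Theorem~\ref{4.10} hold automatically.

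With that in hand, the four equivalences of the corollary are literally the four equivalences of Theorem~\ref{4.10} with $\alpha = 1_R$ substituted: $R$ is $CN$ iff $R[x]$ is $C\Delta$ iff $R[x]$ is $CJ$ iff $R[x]$ is $CN$. I would write the proof in one or two sentences: note that $R$ is trivially $1_R$-compatible, apply Theorem~\ref{4.10} to the endomorphism $1_R$, and invoke the identification $R[x; 1_R] = R[x]$.

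There is essentially no obstacle here; the corollary is a pure specialization. The only point worth a moment's care is spelling out why $1_R$-compatibility is automatic, since a reader might otherwise wonder whether the $\alpha$-compatibility hypothesis of Theorem~\ref{4.10} genuinely transfers. Thus the entire proof reads: \emph{Since $R$ is trivially $1_R$-compatible and $R[x] = R[x; 1_R]$, the assertion follows immediately from Theorem~\ref{4.10} applied with $\alpha = 1_R$.}
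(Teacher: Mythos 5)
Your proposal is correct and matches the paper exactly: the paper states this corollary as a direct consequence of Theorem~\ref{4.10}, obtained by taking $\alpha=1_R$ (so that $R[x;\alpha]=R[x]$), and your observation that $1_R$-compatibility holds trivially is precisely the only point needing verification.
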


Let $Nil^*(R)$ be the sum of all nil-ideals in $R$. We call $Nil^*(R)$ the {\it upper nil-radical} of $R$. This is the largest nil-ideal of $R$. In order to remember the famous {K\"othe}\text{conjecture} to the non-expert reader, we refer to \cite [10.28]{17} for a more detailed information.

\medskip

So, we are in a position to show validity of the following major assertion.

\begin{theorem}\label{4.11}
If $R[x]$ is a $C\Delta$ ring, then $R$ satisfies the {K\"othe}~\text{conjecture}.
\end{theorem}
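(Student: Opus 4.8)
The plan is to recall what the Köthe conjecture says in an equivalent, workable form and then extract that statement from the hypothesis that $R[x]$ is $C\Delta$. The most convenient equivalent form is due to Krempa–Amitsur: the Köthe conjecture holds for $R$ if and only if the upper nilradical is polynomially well-behaved, i.e. $Nil^*(R)[x] = Nil^*(R[x])$, equivalently every nil one-sided ideal of $R$ is contained in $Nil^*(R)$, equivalently $Nil_*(R[x]) = Nil^*(R)[x]$. So the goal reduces to showing that, under the $C\Delta$ hypothesis on $R[x]$, the ring $R$ has no "bad" nil one-sided ideals; in fact it suffices to show that $R/Nil^*(R)$ has no nonzero nil one-sided ideals, which will follow once we know $R[x]/J(R[x])$ is reduced.

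\medskip

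The key steps, in order, are as follows. First I would invoke Proposition~\ref{4.3}(vii): since $R[x]$ is $C\Delta$, the quotient $R[x]/J(R[x])$ is reduced, i.e. it has no nonzero nilpotent elements. Second, I would use the classical fact (Amitsur's theorem on the Jacobson radical of a polynomial ring) that $J(R[x]) = N[x]$ for a certain nil ideal $N$ of $R$; in particular $J(R[x])$ is a nil ideal of $R[x]$, so $J(R[x]) \subseteq Nil_*(R[x])$, and combined with the reducedness from the first step we get $J(R[x]) = Nil_*(R[x]) = Nil^*(R[x])$ and that $R[x]/J(R[x])$ is in fact reduced. Third, from $J(R[x]) = N[x]$ being a nil ideal with $R[x]/N[x] \cong (R/N)[x]$ reduced, I would deduce that every nil one-sided ideal of $R$ sits inside $N$: indeed if $L$ were a nil right ideal of $R$ with $L \not\subseteq N$, then $L[x]$ would be a nil right ideal of $R[x]$ whose image in the reduced ring $(R/N)[x]$ is nonzero, contradicting that a reduced ring has no nonzero nil one-sided ideals. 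Hence $N$ is the largest nil ideal, $N = Nil^*(R)$, and every nil one-sided ideal of $R$ lies in $Nil^*(R)$, which is precisely the Köthe conjecture for $R$.

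\medskip

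The main obstacle is the passage between the polynomial ring statement and the ring statement: one must be careful that "reducedness of $R[x]/J(R[x])$" really does force every nil \emph{one-sided} ideal of $R$ (not just every nil \emph{two-sided} ideal) into $Nil^*(R)$. This is handled by the observation that if $L$ is a nil right ideal of $R$ then $L[x]$ (or more cautiously $L + Lx + Lx^2 + \cdots$, which is a right ideal since $R[x]$ has commuting indeterminate) is a nil right ideal of $R[x]$ — here one uses that $R$ is ordinary polynomials, so there is no twisting and nilpotence of coefficients is inherited — and a reduced ring has no nonzero nil one-sided ideals at all, because $a^2 = 0$ forces $a = 0$ in a reduced ring and nilpotence of an element in a nil ideal can be reduced to this by the usual induction. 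A secondary point to get right is which nilradical appears in Amitsur's description $J(R[x]) = N[x]$; identifying $N$ with $Nil^*(R)$ is exactly the content of the Krempa–Amitsur equivalence, so one should phrase the conclusion as "$R$ satisfies the Köthe conjecture" via that equivalence rather than trying to reprove it.
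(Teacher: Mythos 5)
Your overall strategy is the paper's (Proposition~\ref{4.3}(vii) gives that $R[x]/J(R[x])$ is reduced, Amitsur's theorem gives $J(R[x])=N[x]$ for a nil ideal $N$ of $R$, then conclude via maximality of the upper nilradical), but the step you single out as the main obstacle is settled by a claim that is false: if $L$ is a nil one-sided ideal of $R$, it does \emph{not} follow that $L[x]$ (equivalently $L+Lx+Lx^2+\cdots$) is nil. Nilpotence of the coefficients of a polynomial is not inherited by the polynomial over a noncommutative ring, and the global form of your ``observation'' --- that $A$ nil forces $A[x]$ nil --- is precisely Amitsur's conjecture, refuted by Smoktunowicz. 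Worse, if that observation were available, Theorem~\ref{4.11} would be vacuous as a conditional statement: a nil one-sided ideal is always contained in the Jacobson radical, so $L[x]$ nil would give $L[x]\subseteq J(R[x])=N[x]$, hence $L\subseteq N\subseteq Nil^{*}(R)$ for \emph{every} ring $R$, i.e. the full K\"othe conjecture, with the $C\Delta$ hypothesis never used. The same unjustified principle appears in your second step, where you infer that $J(R[x])=N[x]$ is a nil ideal of $R[x]$ from the fact that $N$ is nil.

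The gap is easy to close, and the repaired argument is essentially the paper's proof: since $J(R[x])=N[x]$, we have $R[x]/J(R[x])\cong (R/N)[x]$, which is reduced by Proposition~\ref{4.3}(vii); hence its subring $R/N$ is reduced, so every nilpotent element of $R$ lies in $N$. Thus $Nil(R)=N$, so the set of nilpotent elements is an ideal, necessarily equal to $Nil^{*}(R)$, and any nil one-sided ideal of $R$, consisting of nilpotent elements, is contained in $Nil^{*}(R)$ --- which is the K\"othe condition for $R$. Note that this element-wise route also avoids leaning on the Krempa--Amitsur ``equivalent forms,'' which are statements quantified over all rings and should be applied to a single ring with care; the paper's proof likewise verifies $N=Nil^{*}(R)$ (equivalently $J(R[x])=Nil^{*}(R)[x]$) directly from the reducedness of $R[x]/J(R[x])$, never needing any nil polynomial ideal.
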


\begin{proof}
Assume that $R[x]$ is $C\Delta$. It suffices to establish that $J(R[x])=Nil^*(R)[x]$. Indeed, since $J(R[x])=N[x]$, where $N=J(R[x])\cap R$, we show that $Nil^*(R)=N$. To that goal, given $a\in Nil^*(R)$, there exists $k\in \mathbb{N}$ such that $a^k=0$; thus, $a^k\in J(R[x])$. Since $R[x]$ is $C\Delta$, the quotient $R[x]/J(R[x])$ is reduced, which fact is enabled from Proposition \ref{4.3}(vii). So, we conclude that $a\in J(R[x])$ and this insures that $a\in N$. On the other hand, since $N$ is a nil-ideal and $Nil^*(R)$ is the largest nil-ideal of $R$, we infer $N\subseteq Nil^*(R)$. Therefore, $N=Nil^*(R)$, as required.
\end{proof}

The next two necessary and sufficient conditions are worthy of documentation.

\begin{proposition}\label{3.1}
For any ring \( R \) and an integer \( n \geq 1 \), the following are equivalent:
\begin{enumerate}
\item
\( R \) is \(C\Delta\).
\item
\( D_n(R) \) is \(C\Delta\).
\item
\( V_n(R) \) is \(C\Delta\).
\item
\( D_n^k(R) \) is \(C\Delta\).
\item
\( V_n^k(R) \) is \(C\Delta\).
\end{enumerate}
\end{proposition}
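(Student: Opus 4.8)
The plan is to exploit the structural uniformity of all four triangular-type rings $D_n(R)$, $V_n(R)$, $D_n^k(R)$ and $V_n^k(R)$: each is a subring $S$ of $\mathrm{T}_n(R)$ containing the scalar matrices $R\cdot I_n$, and in each case the "off-diagonal part" forms an ideal $N$ of $S$ with $N\subseteq J(S)$ and $S/N\cong R$. I would first isolate this as the engine of the argument and then apply it uniformly. Concretely, for each of the rings $S\in\{D_n(R),V_n(R),D_n^k(R),V_n^k(R)\}$ one checks: (a) $S$ is closed under multiplication and contains $I_n$; (b) the set $N$ of elements of $S$ with all diagonal entries zero is a two-sided ideal of $S$; (c) $N$ is nil, hence $N\subseteq J(S)$ — indeed every element of $N$ is strictly upper triangular, so $N^n=0$; (d) the map $S\to R$ sending a matrix to its (common) diagonal entry is a surjective ring homomorphism with kernel $N$, so $S/N\cong R$. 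This last point uses that in each of these rings all the diagonal entries are forced to be equal (for $D_n$ by definition, for $V_n$ because it is a band of scalars, for $D_n^k$ and $V_n^k$ by the $cI_n$ summand together with the off-diagonal shape).

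Granting the engine, the equivalence is short. For the direction $(\mathrm{i})\Rightarrow(\mathrm{ii})$–$(\mathrm{v})$: given the $C\Delta$ property of $R$ and an element $A\in S$, let $c\in C(R)$ and $r\in\Delta(R)$ with $(\text{diagonal of }A)=c+r$; then $cI_n\in C(S)$, while $A-cI_n$ lies in the coset $rI_n+N$. Since $N\subseteq J(S)\subseteq\Delta(S)$ and, by Lemma~\ref{2.7} together with $S/N\cong R$, we have $\Delta(S)/N=\Delta(S/N)=\Delta(R)$ — so $rI_n+N\subseteq\Delta(S)$ — it follows that $A-cI_n\in\Delta(S)$, giving a $C\Delta$-decomposition of $A$. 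For the converse $(\mathrm{ii})\Rightarrow(\mathrm{i})$, etc., apply Proposition~\ref{4.3}(i): $S$ is $C\Delta$ and $N\subseteq J(S)$, hence $S/N\cong R$ is $C\Delta$. Alternatively, since the diagonal embedding $R\to S$ identifies $R$ with the corner $\mathrm{e}S\mathrm{e}$... — actually the cleaner route is just the quotient, so I would use Proposition~\ref{4.3}(i) and avoid corner subrings here.

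I would organize the write-up so that the "engine" lemma (items (a)–(d) above, plus the consequence $\Delta(S)=\Delta(R)+N$ via Corollary~\ref{2.9}) is verified once, and then the five-way equivalence falls out in two lines per implication. The main obstacle is the bookkeeping in step (b)–(d) for the less familiar rings $V_n^k(R)$ and $D_n^k(R)$: one must check carefully that the prescribed entry pattern is genuinely closed under matrix multiplication (so that $S$ is a ring at all) and that the product of two elements of $N$ again has the prescribed shape with zero diagonal — the index arithmetic defining $V_n^k$ in particular needs to be handled with care. Everything else — $N^n=0$, $N\subseteq J(S)$, the isomorphism $S/N\cong R$, and the transfer of the $C\Delta$ property — is routine once the ring structure of $S$ and the ideal $N$ are pinned down. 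No genuinely new idea beyond Lemma~\ref{2.7}/Corollary~\ref{2.9} and Proposition~\ref{4.3}(i) is required.
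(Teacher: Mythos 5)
Your proposal is correct, and its core decomposition coincides with the paper's: write the common diagonal entry as $c+r$ with $c\in C(R)$ and $r\in\Delta(R)$, take $cI_n$ as the central summand, and absorb the strictly upper-triangular part into the $\Delta$-summand. The difference is in how the memberships are justified and how the converse is run. The paper argues directly on $D_n(R)$ (asserting "at once" that the remainder lies in $\Delta(D_n(R))$, and for the converse reading off $a=c+b$ from an explicit decomposition of $aI_n$), then declares the other three rings "similar"; you instead prove one engine -- the zero-diagonal elements form a nilpotent ideal $N$ of $S$ with $S/N\cong R$, so by Corollary~\ref{2.9} together with $N\subseteq J(S)\subseteq\Delta(S)$ one gets that $\Delta(S)$ is exactly the preimage of $\Delta(R)$ -- and then all forward implications are one line, while every converse follows from Proposition~\ref{4.3}(i). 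This buys uniformity across the five rings, rigorously fills in the steps the paper leaves to inspection, and sidesteps a small imprecision in the paper's converse, where the central summand is displayed as a scalar matrix although $C(D_n(R))$ also contains non-scalar elements such as $de_{1n}$ with $d\in C(R)$; your quotient argument never needs to know the center of $S$, only that $cI_n$ is central. It is also the strategy the paper itself adopts later for ${\rm T}_n(R,\alpha)$ in Proposition~\ref{3.17}. The only genuine burden, which you correctly flag, is the index bookkeeping showing that for $D_n^k(R)$ and $V_n^k(R)$ the entry patterns are multiplicatively closed and the zero-diagonal part is an ideal -- the same verification the paper's "can be proved similarly" quietly assumes.
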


\begin{proof}
\((i) \Rightarrow (ii)\). Let
\[
A = \begin{pmatrix} a & & & & * \\ & a & & & \\ & & \ddots & & \\ 0 & & & & a \end{pmatrix} \in D_n(R).
\]
Thanks to (i), there exist \( c \in C(R) \) and \( b \in \Delta(R) \) such that \( a = c + b \). Let
\[
C = \begin{pmatrix} c &   &  & 0 \\  & c &  &  \\  &  & \ddots &  \\  0&  &  & c \end{pmatrix}, \quad B = \begin{pmatrix} b &  &  & * \\  & b &  &  \\  &  & \ddots &  \\ 0 &  &  & b \end{pmatrix}.
\]
Then, we verify at once that \( C \in C(D_n(R)) \) and \( B \in \Delta(D_n(R)) \). Also, we have \( A = C + B \). Therefore, \( D_n(R) \) is \(C\Delta\).

\((ii) \Rightarrow (i)\). Let \( a \in R \). Consider the matrix
\[
A = \begin{pmatrix} a &  &  & 0 \\  & a &  &  \\ \ &  & \ddots &  \\ 0 &  &  & a \end{pmatrix} \in D_n(R).
\]
So, \( A \) has a \(C\Delta\)-decomposition \( A = C + B \) in \(D_n(R)\), where a readily verification gives that
\[
C = \begin{pmatrix} c &  &  & 0 \\  & c &  &  \\  &  & \ddots &  \\ 0 &  &  & c \end{pmatrix} \in C(P_n(R)), \quad B = \begin{pmatrix} b &  &  & * \\  & b &  &  \\  &  & \ddots &  \\ 0 &  &  & b \end{pmatrix} \in \Delta(R_n).
\]
Then, we may write \( a = c + b \) with \( c \in C(R) \) and \( b \in \Delta(R) \). Hence, \( a \) has a \(C\Delta\)-decomposition in \( R \), and so \( R \) is \(C\Delta\).
The equivalencies \((i) \Leftrightarrow (iii)\), \((i) \Leftrightarrow (iv)\) and \((i) \Leftrightarrow (v)\) can be proved similarly, so we eliminate their evidences.
\end{proof}

\begin{proposition}\label{3.2}
Let $R=\prod_{i \in I}R_i$ be a direct product of rings. Then, $R$ is $C\Delta$ if, and only if, $R_i$ is $C\Delta$ for each index $i$.
\end{proposition}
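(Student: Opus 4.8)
The plan is to reduce everything to two coordinate-wise identities: $C(R) = \prod_{i\in I} C(R_i)$ and $\Delta(R) = \prod_{i\in I}\Delta(R_i)$. The first is routine: a tuple $(c_i)$ commutes with every $(x_i)$ if and only if each $c_i$ commutes with every $x_i$ in $R_i$ (test against tuples supported in a single coordinate). For the second I would invoke the explicit description of $\Delta$ recalled in the introduction, namely that $a\in\Delta(S)$ precisely when $1-ua\in U(S)$ for all $u\in U(S)$, together with the elementary fact that $U\bigl(\prod_{i} R_i\bigr)=\prod_{i} U(R_i)$ (a tuple is invertible iff each entry is, also for infinite $I$). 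Then $(a_i)\in\Delta(R)$ iff $1-u_i a_i\in U(R_i)$ for every $i$ and every choice of $u_i\in U(R_i)$; since the $u_i$ may be chosen independently in each coordinate, this is exactly the assertion that $a_i\in\Delta(R_i)$ for all $i$.

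Granting these two identities, both directions become immediate. For the ``if'' direction, take $a=(a_i)\in R$ and, using that each $R_i$ is $C\Delta$, write $a_i=c_i+r_i$ with $c_i\in C(R_i)$ and $r_i\in\Delta(R_i)$; then $a=(c_i)+(r_i)$ is a $C\Delta$-decomposition in $R$ by the two identities. For the ``only if'' direction, fix an index $j$ and an arbitrary $b\in R_j$, apply the $C\Delta$ property of $R$ to the tuple that equals $b$ in coordinate $j$ and is $0$ elsewhere (equivalently, push a $C\Delta$-decomposition through the projection $\pi_j\colon R\to R_j$), and read off the $j$-th components: since the central part lies in $\prod_i C(R_i)$ and the $\Delta$-part in $\prod_i\Delta(R_i)$, one gets $b=c_j+r_j$ with $c_j\in C(R_j)$ and $r_j\in\Delta(R_j)$, so $R_j$ is $C\Delta$.

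The only point requiring a moment's care is the identity $\Delta\bigl(\prod_i R_i\bigr)=\prod_i\Delta(R_i)$; I would emphasize that it is precisely the unit-multiplication characterization of $\Delta$ that makes the argument work coordinate-wise, since membership in $\Delta$ is probed solely by units and the units of a product are products of units. (Alternatively, one could note $J\bigl(\prod_i R_i\bigr)=\prod_i J(R_i)$ and observe that $\prod_i\Delta(R_i)$ is a subring of $\prod_i J(R_i)$ stable under multiplication by all of $U\bigl(\prod_i R_i\bigr)$, hence contained in $\Delta\bigl(\prod_i R_i\bigr)$ by the maximality property of $\Delta$, with the reverse inclusion supplied by the projections; but the direct computation is cleaner.) No genuine obstacle arises here: the proposition is essentially bookkeeping about how $C$, $\Delta$, and $U$ interact with direct products.
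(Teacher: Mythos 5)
Your proposal is correct and follows essentially the same route as the paper, which likewise rests on the identities \(C(R)=\prod_{i\in I}C(R_i)\) and \(\Delta(R)=\prod_{i\in I}\Delta(R_i)\) (the latter cited from Leroy--Matczuk) and then declares the remaining decomposition argument routine. You have merely filled in the details the paper omits, including a direct verification of the \(\Delta\)-product identity via the unit criterion, which is a sound way to justify the citation.
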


\begin{proof}
We know that \(C(R) = \prod_{i \in I} C(R_i)\) and, thanks to \cite{2}, that \(\Delta(R) = \prod_{i \in I} \Delta(R_i)\). So, our further arguments are routine and thus omitted.
\end{proof}

We noticed earlier that, if \(n \geq 2\), then \(M_n(R)\) is {\it not} \(C\Delta\). We now exhibit certain subrings of \(M_3(R)\) that are \(C\Delta\) whenever \(R\) is \(C\Delta\).

And so, let \(R\) be a ring, and assume \(s, t \in C(R)\). Write
\[
L_{(s,t)} = \left\{ \begin{pmatrix} a & 0 & 0 \\ s c & d & t e \\ 0 & 0 & f \end{pmatrix} \in M_3(R) : a, c, d, e, f \in R \right\},
\]
where the operations are defined as those in \(M_3(R)\). Then, a simple inspection is a guarantor that \(L_{(s,t)}(R)\) is a subring of \(M_3(R)\).

\medskip

We, thereby, arrive at the following technicality.

\begin{lemma}\label{3.3}
Let \(R\) be a ring, and \(s, t \in C(R)\). Then,
\[
\Delta(L_{(s,t)}(R)) = \left\{ \begin{pmatrix} a & 0 & 0 \\ s c & d & t e \\ 0 & 0 & f \end{pmatrix} \in L_{(s,t)}(R) : a, d, f \in \Delta(R), c, e \in R \right\}.
\]
\end{lemma}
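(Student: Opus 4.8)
The strategy is to identify $L_{(s,t)}(R)$ with a more tractable ring so that its Jacobson radical, and hence $\Delta$, can be read off directly. Observe that the entries $a$ and $f$ sit on the diagonal in positions $(1,1)$ and $(3,3)$ and never interact with anything else except themselves, while the middle row carries the "off-diagonal" data $sc$ and $te$ together with the diagonal entry $d$ in position $(2,2)$. Concretely I would first check that the set
\[
N = \left\{ \begin{pmatrix} a & 0 & 0 \\ s c & d & t e \\ 0 & 0 & f \end{pmatrix} \in L_{(s,t)}(R) : a, d, f \in J(R),\ c, e \in R \right\}
\]
is a two-sided ideal of $L_{(s,t)}(R)$: closure under addition is clear, and closure under multiplication on either side uses that $s,t$ are central (so products of the relevant entries land back in $J(R)$ times the appropriate unit $s$ or $t$, or in $J(R)$ itself on the diagonal) together with the fact that $J(R)$ absorbs arbitrary ring elements.

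**Reduction modulo $N$.** Next I would show $L_{(s,t)}(R)/N \cong R/J(R) \times R/J(R) \times R/J(R)$ via the map sending a matrix to $(\bar a, \bar d, \bar f)$; this is a well-defined surjective ring homomorphism (the off-diagonal entries $sc, te$ are killed because $c,e$ are arbitrary, so they always lie in the image of $N$), and its kernel is exactly $N$. Since $R/J(R)$ has zero Jacobson radical and hence $\Delta(R/J(R)) = 0$ by Corollary~\ref{2.8} applied to each factor — or rather, directly, a finite product of rings with trivial Jacobson radical has trivial Jacobson radical — we get $J(L_{(s,t)}(R)/N) = 0$. On the other hand $N \subseteq J(L_{(s,t)}(R))$: one verifies that $1 + x$ is a unit in $L_{(s,t)}(R)$ for every $x \in N$, using that the corresponding diagonal perturbations $1+a$, $1+d$, $1+f$ are units in $R$ and inverting the resulting matrix explicitly (its inverse has the same shape because $s,t$ are central). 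Then Lemma~\ref{2.7} gives $\Delta(L_{(s,t)}(R)) = N = J(L_{(s,t)}(R))$, which is precisely the claimed description with $\Delta(R)$ in place of $J(R)$.

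**The gap between $J(R)$ and $\Delta(R)$.** The one genuine subtlety is that the asserted formula has $a, d, f \in \Delta(R)$, not $J(R)$, so the argument above only proves half of it unless $\Delta(R) = J(R)$. To close this I would argue directly from the definition of $\Delta$: if the matrix $M$ has diagonal entries $a,d,f \in \Delta(R)$ and arbitrary $c,e$, then for any unit $U \in U(L_{(s,t)}(R))$ — which, since $L_{(s,t)}(R)/J$ is a product of three copies of $R/J(R)$ and idempotents are central there, has diagonal entries $u_1, u_2, u_3 \in U(R)$ — the product $UM$ again has shape in $L_{(s,t)}(R)$ with diagonal entries $u_i$ times the respective $\Delta(R)$-elements, hence again in $\Delta(R)$, and off-diagonal entries of the permitted form; then $I_3 - UM$ has diagonal entries $1 - u_i(\cdot) \in U(R)$ and is therefore a unit of $L_{(s,t)}(R)$ by the explicit inversion noted above. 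This shows the right-hand set is contained in $\Delta(L_{(s,t)}(R))$. For the reverse containment, if $M \in \Delta(L_{(s,t)}(R))$ then its image in each of the three quotient copies of $R/J(R)$ lies in $\Delta(R/J(R)) = 0$ (using Proposition~\ref{2.15}-type functoriality, or directly testing with units supported on one factor), forcing $a,d,f \in J(R) \subseteq \Delta(R)$; and $c,e$ are unconstrained.

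**Expected main obstacle.** The routine part is the ideal check and the isomorphism $L_{(s,t)}(R)/N \cong (R/J(R))^3$. The step that needs the most care is verifying that an arbitrary unit of $L_{(s,t)}(R)$ has invertible diagonal entries and that $I_3 - UM$ is actually invertible in the subring (not merely in $M_3(R)$) — i.e., that the inverse again has the constrained shape. This hinges entirely on the centrality of $s$ and $t$, which is exactly why that hypothesis appears; I would isolate this as the key computation and then the $\Delta$-versus-$J$ refinement follows formally.
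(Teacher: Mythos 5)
Your forward containment is essentially correct and coincides with the paper's: units of \(L_{(s,t)}(R)\) have unit diagonal entries, the ``cross'' shape is stable under multiplication and under inversion because \(s,t\) are central, and \(1-u_i\delta_i\in U(R)\) whenever \(\delta_i\in\Delta(R)\) and \(u_i\in U(R)\). The genuine gap is in the other half. Twice you use the implication ``\(J(S)=0\Rightarrow\Delta(S)=0\)'' for \(S=R/J(R)\): first when you invoke Lemma~\ref{2.7} to conclude \(\Delta(L_{(s,t)}(R))=N=J(L_{(s,t)}(R))\) (Lemma~\ref{2.7} requires \(\Delta\) of the quotient to vanish, not merely its Jacobson radical), and again in your reverse containment, where you assert \(\Delta(R/J(R))=0\) and deduce \(a,d,f\in J(R)\). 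That implication is false in general: by Corollary~\ref{2.8} it is equivalent to \(\Delta(R)=J(R)\), which by Leroy--Matczuk holds exactly when \(\Delta(R)\) is an ideal and fails in general --- indeed this failure is the whole reason \(C\Delta\) differs from \(CJ\). Moreover, your two halves contradict each other whenever \(\Delta(R)\neq J(R)\): a diagonal matrix with entries in \(\Delta(R)\setminus J(R)\) lies in \(\Delta(L_{(s,t)}(R))\) by your forward argument, while your reverse argument would force its diagonal entries into \(J(R)\). What your quotient computation actually establishes is only \(J(L_{(s,t)}(R))=N\), which is true but is not the statement.

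The repair is short and is what the paper does: given \(A\in\Delta(L_{(s,t)}(R))\), test it against units \(B\) of \(L_{(s,t)}(R)\) with arbitrary unit diagonal entries \(x,z,v\in U(R)\) and arbitrary \(y,u\in R\) (such \(B\) are invertible in \(L_{(s,t)}(R)\) by your shape-preserving inversion); then \(I_3+AB\in U(L_{(s,t)}(R))\), and since units of \(L_{(s,t)}(R)\) have unit diagonal entries one reads off \(1+ax,\ 1+dz,\ 1+fv\in U(R)\) for all \(x,z,v\in U(R)\), i.e.\ \(a,d,f\in\Delta(R)\), with \(c,e\) unconstrained. Alternatively, your quotient framework can be salvaged via Corollary~\ref{2.9} applied with \(I=N=J(L_{(s,t)}(R))\): the image of \(\Delta(L_{(s,t)}(R))\) in \(L_{(s,t)}(R)/N\cong\bigl(R/J(R)\bigr)^3\) equals \(\Delta\bigl((R/J(R))^3\bigr)=\bigl(\Delta(R)/J(R)\bigr)^3\), which yields \(a,d,f\in\Delta(R)\) --- but never \(a,d,f\in J(R)\).
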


\begin{proof}
Assume that \(a, d, f \in \Delta(R)\). Put
\[
A := \begin{pmatrix} a & 0 & 0 \\ s c & d & t e \\ 0 & 0 & f \end{pmatrix} \in L_{(s,t)}(R).
\]
For any element
\[
B = \begin{pmatrix} x & 0 & 0 \\ s y & z & t u \\ 0 & 0 & v \end{pmatrix} \in U(L_{(s,t)}(R)),
\]
we have
\[
I_3 + AB = \begin{pmatrix} 1+ax & 0 & 0 \\ s(cx+by) & 1+dz & t(du+ev) \\ 0 & 0 & 1+fv \end{pmatrix}.
\]
Under our hypothesis, it must be that \(1+ax\), \(1+dz\), \(1+fv \in U(R)\). Hence, \(I_3 + AB \in U(L_{(s,t)}(R))\) and \(A \in \Delta(L_{(s,t)}(R))\), as needed.

Conversely, suppose that \(a, d, f \in R\) and put
\[
A := \begin{pmatrix} a & 0 & 0 \\ sc & d & te \\ 0 & 0 & f \end{pmatrix} \in \Delta(L_{(s,t)}(R)).
\]
For any \(x, z, v \in U(R)\) and \(y, u \in R\), letting
\[
B = \begin{pmatrix} x & 0 & 0 \\ sy & z & tu \\ 0 & 0 & v \end{pmatrix} \in U(L_{(s,t)}(R)),
\]
we have \(I_3 + AB \in U(L_{(s,t)}(R))\), i.e.,
\[
I_3 + AB = \begin{pmatrix} 1+ax & 0 & 0 \\ s(cx+dy) & 1+dz & t(dx+tev) \\ 0 & 0 & 1+fv \end{pmatrix} \in U(L_{(s,t)}(R)).
\]
Thus, we have \(1+ax\), \(1+dz\), \(1+fv \in U(R)\). This unambiguously shows that \(a, d, f \in \Delta(R)\), as required.
\end{proof}

Consider the following subring of \(L_{(s,t)}(R)\):
\[
V_2(L_{(s,t)}(R)) = \left\{ \begin{pmatrix} a & 0 & 0 \\ 0 & a & te \\ 0 & 0 & a \end{pmatrix} \in L_{(s,t)}(R) : a, e \in R \right\}.
\]

\medskip

We are now ready to establish the following.

\begin{proposition}\label{3.4}
Let \(R\) be a ring. Then, the following statements hold:
\begin{enumerate}
\item
\(R\) is a \(C\Delta\) ring if, and only if, \(V_2(L_{(s,t)}(R))\) is a \(C\Delta\) ring.
\item
Assume that \(R\) is a \(C\Delta\) ring. Suppose also that any \(\{a, d, f\} \subseteq R\) having a \(C\Delta\)-decomposition \(a = x + p\), \(d = y + q\) and \(f = z + r\) with \(\{x, y, z\} \subseteq C(R)\) and \(\{p, q, r\} \subseteq \Delta(R)\) satisfy \(sx = sy\) and \(ty = tz\). Then, \(L_{(s,t)}(R)\) is a \(C\Delta\) ring.
\end{enumerate}
\end{proposition}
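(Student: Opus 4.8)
The plan is to handle the two parts separately, leaning heavily on the explicit description of $\Delta(L_{(s,t)}(R))$ furnished by Lemma~\ref{3.3}, together with an identification of the center of these rings. For part~(i), I would first observe that $V_2(L_{(s,t)}(R))$ embeds into $L_{(s,t)}(R)$, and by Lemma~\ref{3.3} an element
\[
A=\begin{pmatrix} a & 0 & 0 \\ 0 & a & te \\ 0 & 0 & a \end{pmatrix}
\]
lies in $\Delta(V_2(L_{(s,t)}(R)))$ exactly when $a\in\Delta(R)$ (the off-diagonal entry $te$ being free). One then checks that $A$ is central in $V_2(L_{(s,t)}(R))$ precisely when $a\in C(R)$ and $e=0$; the computation amounts to commuting $A$ with a generic element of the subring. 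With these two facts, the forward direction is immediate: given $a=c+b$ with $c\in C(R)$, $b\in\Delta(R)$, split $A$ as the central matrix with diagonal $c$ (and zero in the $(2,3)$-slot) plus the $\Delta$-matrix with diagonal $b$ and the entry $te$. Conversely, any $C\Delta$-decomposition of such a diagonal $A$ in $V_2(L_{(s,t)}(R))$ projects, by reading off the $(1,1)$-entry, to a $C\Delta$-decomposition $a=c+b$ in $R$; here one uses that the central part forces $c\in C(R)$ and the $\Delta$-part forces $b\in\Delta(R)$ via Lemma~\ref{3.3}.

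For part~(ii), given
\[
M=\begin{pmatrix} a & 0 & 0 \\ sc' & d & te' \\ 0 & 0 & f \end{pmatrix}\in L_{(s,t)}(R),
\]
the idea is to decompose each of the three diagonal entries $a,d,f$ using the hypothesis that $R$ is $C\Delta$, say $a=x+p$, $d=y+q$, $f=z+r$ with $x,y,z\in C(R)$ and $p,q,r\in\Delta(R)$. The candidate central summand is
\[
C=\begin{pmatrix} x & 0 & 0 \\ 0 & y & 0 \\ 0 & 0 & z \end{pmatrix},
\]
and the candidate $\Delta$-summand is $M-C$, which has diagonal $(p,q,r)$ and off-diagonal entries $sc'$, $te'$. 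By Lemma~\ref{3.3}, since $p,q,r\in\Delta(R)$, the matrix $M-C$ indeed lies in $\Delta(L_{(s,t)}(R))$. The crux is showing $C\in C(L_{(s,t)}(R))$: commuting $C$ against a generic element of $L_{(s,t)}(R)$ produces, in the $(2,1)$-entry, the term $s x c' $ versus $s y c'$, and in the $(2,3)$-entry, $t y e'$ versus $t z e'$. These match exactly because the hypothesis guarantees $sx=sy$ and $ty=tz$; the remaining entries force nothing beyond $x,y,z$ being central in $R$, which we already have. Hence $C$ is central, $M=C+(M-C)$ is a $C\Delta$-decomposition, and $L_{(s,t)}(R)$ is $C\Delta$.

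The main obstacle is pinning down the center of $L_{(s,t)}(R)$ and of $V_2(L_{(s,t)}(R))$ precisely enough: unlike $M_n(R)$, these are non-standard subrings, so one must carefully run the commutator conditions to see which diagonal-and-off-diagonal patterns are central, and in particular to see that the hypothesis $sx=sy$, $ty=tz$ in part~(ii) is exactly what is needed (no more, no less) for the proposed $C$ to be central. The $\Delta$-computations are already done for us in Lemma~\ref{3.3}, so beyond the centrality bookkeeping the argument is routine matrix manipulation.
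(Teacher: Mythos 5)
Your proposal is correct and follows essentially the same route as the paper: decompose the diagonal entries of the given matrix, take the central candidate \(\mathrm{diag}(x,y,z)\) (resp.\ \(cI_3\) in the \(V_2\) case), put the off-diagonal data into the \(\Delta\)-summand via the Lemma~\ref{3.3}-type description, and observe that the hypothesis \(sx=sy\), \(ty=tz\) is exactly what makes the central candidate commute in the \((2,1)\)- and \((2,3)\)-slots. One minor inaccuracy: the center of \(V_2(L_{(s,t)}(R))\) consists of the matrices with \(a\in C(R)\) and \(te\in C(R)\) (not necessarily \(e=0\)), but this does not affect your argument, since the forward direction only needs that \(cI_3\) is central and the converse only reads off the diagonal entries.
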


\begin{proof}
\((i)\) Assume that \(R\) is a \(C\Delta\) ring. Set
\[
A := \begin{pmatrix} a & 0 & 0 \\ 0 & a & t e \\ 0 & 0 & a \end{pmatrix} \in V_2(L_{(s,t)}(R)).
\]
Then, there exist \(c \in C(R)\) and \(r \in \Delta(R)\) such that \(a = c + r\). Hence, for the next two matrices
\[
C = \begin{pmatrix} c & 0 & 0 \\ 0 & c & 0 \\ 0 & 0 & c \end{pmatrix}, \quad B = \begin{pmatrix} r & 0 & 0 \\ 0 & r & t e \\ 0 & 0 & r \end{pmatrix},
\]
we detect that \(C \in C(V_2(L_{(s,t)}(R)))\) and \(B \in \Delta(V_2(L_{(s,t)}(R)))\). Therefore, \(A = C + B\) is the wanted \(C\Delta\)-decomposition of \(A\) in \(V_2(L_{(s,t)}(R))\).

For the converse, choose \(x \in R\) and consider
\[
A := \begin{pmatrix} x & 0 & 0 \\ 0 & x & 0 \\ 0 & 0 & x \end{pmatrix} \in V_2(L_{(s,t)}(R)).
\]
Then, there exist
\[
C = \begin{pmatrix} c & 0 & 0 \\ 0 & c & 0 \\ 0 & 0 & c \end{pmatrix} \in C(V_2(L_{(s,t)}(R))), \quad B = \begin{pmatrix} r & 0 & 0 \\ 0 & r & 0 \\ 0 & 0 & r \end{pmatrix} \in \Delta(V_2(L_{(s,t)}(R))).
\]
So, \(c \in C(R)\) and \(r \in \Delta(R)\), and \(x= c + r\) is the \(C\Delta\)-decomposition of \(x\). Consequently, \(R\) is a \(C\Delta\) ring, as asserted.

(ii) Suppose that
\[
A := \begin{pmatrix} a & 0 & 0 \\ s c & d & t e \\ 0 & 0 & f \end{pmatrix} \in L_{(s,t)}(R).
\]
Let \(a = x + p\), \(d = y + q\) and \(f = z + r\) denote the \(C\Delta\)-decompositions of \(a\), \(d\), and \(f\), respectively. By our hypothesis, \(sx = sy\) and \(ty = tz\). Thus, we deduce that \(A\) has a \(C\Delta\)-decomposition in \(L_{(s,t)}(R)\) as \(A = C + B\), where
\[
C = \begin{pmatrix} x & 0 & 0 \\ 0 & y & 0 \\ 0 & 0 & z \end{pmatrix} \in C(L_{(s,t)}(R)), \quad B = \begin{pmatrix} p & 0 & 0 \\ s c & q & t e \\ 0 & 0 & r \end{pmatrix} \in \Delta(L_{(s,t)}(R)),
\]
as claimed.
\end{proof}

As an automatic consequence, we derive:

\begin{corollary}\label{3.5}
Let \(R\) be a ring. If \(L_{(s,t)}(R)\) is a \(C\Delta\) ring, then \(R\) is a \(C\Delta\) ring.
\end{corollary}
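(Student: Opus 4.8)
The plan is to leverage the ``only if'' half of Proposition~\ref{3.4}(i) together with the observation that $V_2(L_{(s,t)}(R))$ is a subring of $L_{(s,t)}(R)$ which contains all the scalar matrices $xI_3$, $x\in R$. Concretely, suppose $L_{(s,t)}(R)$ is a $C\Delta$ ring; I want to conclude that $R$ is $C\Delta$. Given $x\in R$, consider the scalar matrix $A=xI_3$, viewed as an element of $L_{(s,t)}(R)$. Since $L_{(s,t)}(R)$ is $C\Delta$, we may write $A=C+B$ with $C\in C(L_{(s,t)}(R))$ and $B\in\Delta(L_{(s,t)}(R))$.

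The next step is to extract scalar data from this decomposition. Looking at the $(1,1)$, $(2,2)$ and $(3,3)$ entries of $A=C+B$, and using Lemma~\ref{3.3} to see that the diagonal entries of $B$ lie in $\Delta(R)$, I would first argue that the diagonal entries of $C$ are central in $R$: indeed, a matrix $C\in L_{(s,t)}(R)$ commuting with every element of $L_{(s,t)}(R)$ (in particular with all scalar matrices $rI_3$, $r\in R$, which do lie in $L_{(s,t)}(R)$) must have each of its diagonal entries centralizing $R$; one also checks the off-diagonal entries of such a central $C$ are forced to vanish or to be irrelevant. Writing $C=\mathrm{diag}(c_1,c_2,c_3)$ with $c_i\in C(R)$ and $B$ as in Lemma~\ref{3.3}, comparing the $(1,1)$-entry of $xI_3=C+B$ gives $x=c_1+b_{11}$ with $c_1\in C(R)$ and $b_{11}\in\Delta(R)$. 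This is precisely a $C\Delta$-decomposition of $x$ in $R$, so $R$ is $C\Delta$, as claimed.

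Alternatively, and perhaps more cleanly, I would simply invoke Proposition~\ref{3.4}(i) directly once I have checked that the $C\Delta$ property passes from $L_{(s,t)}(R)$ down to the subring $V_2(L_{(s,t)}(R))$. For this I need that a scalar matrix $xI_3\in V_2(L_{(s,t)}(R))$ which is $C\Delta$ in the larger ring $L_{(s,t)}(R)$ is also $C\Delta$ in $V_2(L_{(s,t)}(R))$; this again reduces to checking that the central part $C$ and the $\Delta$-part $B$ coming from the $L_{(s,t)}(R)$-decomposition can be taken inside $V_2(L_{(s,t)}(R))$, which follows from the explicit descriptions of $C(V_2(L_{(s,t)}(R)))$ and $\Delta(V_2(L_{(s,t)}(R)))$ used in the proof of Proposition~\ref{3.4}(i). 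Then Proposition~\ref{3.4}(i) yields that $R$ is $C\Delta$.

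The only genuinely delicate point is the first approach's claim that the central matrices of $L_{(s,t)}(R)$ are (essentially) diagonal with central entries; this is a routine but slightly fussy centralizer computation inside $L_{(s,t)}(R)$, using the presence of the matrix units $e_{21}s$, $e_{23}t$ and the scalar matrices. Since the second approach bypasses this by quoting Proposition~\ref{3.4}(i), I expect the clean writeup to be the one-line reduction: restrict a $C\Delta$-decomposition of $xI_3$ in $L_{(s,t)}(R)$ to $V_2(L_{(s,t)}(R))$ and apply Proposition~\ref{3.4}(i).
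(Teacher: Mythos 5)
Your first argument is, in essence, the paper's own proof: the paper likewise takes the scalar matrix $aI_3\in L_{(s,t)}(R)$, decomposes it as $C+B$ there, and reads off $a$ from the $(1,1)$-entries, with Lemma~\ref{3.3} giving the $\Delta$-part; the centralizer computation you describe (commuting $C$ with all $rI_3$ forces its diagonal entries into $C(R)$) is exactly the detail the paper compresses into ``this easily leads to''. So that version is correct and essentially identical to the paper's.

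A caution about the route you call cleaner: a central element of $L_{(s,t)}(R)$ need not have equal diagonal entries, so the decomposition $xI_3=C+B$ obtained in $L_{(s,t)}(R)$ does not in general lie in, or ``restrict to'', the subring $V_2(L_{(s,t)}(R))$; to repair this you would replace $C$ and $B$ by $c_1I_3$ and $(x-c_1)I_3$, which is precisely the entrywise extraction of your first argument, so nothing is saved. Moreover, Proposition~\ref{3.4}(i) as stated requires \emph{all} of $V_2(L_{(s,t)}(R))$ to be $C\Delta$, not merely the scalar matrices, so the advertised one-line reduction is not literally available (at best you could imitate the converse half of its proof, which again is the same computation). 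Keep the first write-up.
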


\begin{proof}
Assume that \(L_{(s,t)}(R)\) is a \(C\Delta\) ring, and let \(a \in R\) and
\[
A := \begin{pmatrix} a & 0 & 0 \\ 0 & a & 0 \\ 0 & 0 & a \end{pmatrix} \in L_{(s,t)}(R).
\]
Bearing in mind the hypothesis, \(A\) has a \(C\Delta\)-decomposition in \(L_{(s,t)}(R)\). This easily leads to the fact that \(a\) admits a \(C\Delta\)-decomposition in \(R\), as necessary.
\end{proof}

The next example shows that there are \(C\Delta\) rings such that \(L_{(s,t)}(R)\) need {\it not} be a \(C\Delta\) ring.

\begin{example}\label{3.6}
Let \(R = \mathbb{Z}\) and \(A := \begin{pmatrix} 1 & 0 & 0 \\ 2 & 2 & 1 \\ 0 & 0 & 1 \end{pmatrix} \in L_{(1,1)}(R)\). Since \(R\) is a commutative ring, it is too a \(C\Delta\) ring. Assume that \(A = C + B\) is a \(C\Delta\)-decomposition of \(A\). Since \(A\) is neither a central element nor an element in the \(\Delta\), we can assume with no harm in generality that \(A\) has a \(C\Delta\)-decomposition as \(A = C + B\), where
\[
C := \begin{pmatrix} 1 & 0 & 0 \\ 0 & 1 & 0 \\ 0 & 0 & 1 \end{pmatrix} \in C(L_{(1,1)}(R)) \quad \text{and} \quad B := \begin{pmatrix} x & 0 & 0 \\ c & y & c\\ 0 & 0 & z \end{pmatrix} \in \Delta(L_{(1,1)}(R)),
\]
where \(\{x, y, z\} \subseteq \Delta(Z)\). This, however, forces a contradiction with the arithmetic in \(\mathbb{Z}\), as expected.
\end{example}

The next two affirmations are also worthy of recording.

\begin{proposition}\label{3.7}
A ring \(R\) is \(C\Delta\) if, and only if, so is \(L_{(0,0)}(R)\).
\end{proposition}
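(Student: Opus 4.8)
The plan is to observe that specialising $s=t=0$ degenerates $L_{(s,t)}(R)$ into a ring of diagonal matrices. A typical element of $L_{(0,0)}(R)$ has the shape $\begin{pmatrix} a & 0 & 0 \\ 0 & d & 0 \\ 0 & 0 & f \end{pmatrix}$ with $a,d,f\in R$ (the slots carrying $sc$ and $te$ now vanish), and since diagonal matrices multiply entrywise, the assignment $\begin{pmatrix} a & 0 & 0 \\ 0 & d & 0 \\ 0 & 0 & f \end{pmatrix}\mapsto (a,d,f)$ is a ring isomorphism $L_{(0,0)}(R)\cong R\times R\times R$. This identification is the one fact the whole argument rests on.

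Once it is recorded, the statement follows immediately from Proposition~\ref{3.2}: the direct product $R\times R\times R$ is $C\Delta$ if, and only if, each of its three factors is $C\Delta$, that is, if and only if $R$ is $C\Delta$. So I would take $R_1=R_2=R_3=R$ there and conclude.

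I would also note the alternative derivation built purely from the $L_{(s,t)}$-machinery already in place: the implication ``$L_{(0,0)}(R)$ is $C\Delta$ $\Rightarrow$ $R$ is $C\Delta$'' is Corollary~\ref{3.5} with $s=t=0$, while the converse is Proposition~\ref{3.4}(ii) with $s=t=0$ --- in that case the side condition ``$sx=sy$ and $ty=tz$'' holds trivially, since both sides are $0$, so the hypothesis of Proposition~\ref{3.4}(ii) is automatically satisfied and no restriction on the $C\Delta$-decompositions in $R$ is needed. Either route works; I would present the direct-product identification, as it is the most transparent.

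I do not expect a genuine obstacle. The only point deserving a word of care is the verification that the correspondence with $R\times R\times R$ respects multiplication, and for diagonal matrices this is completely routine; the key structural remark is simply that the parameters $s$ and $t$, which occupy the $(2,1)$- and $(2,3)$-entries in a general $L_{(s,t)}(R)$, are annihilated when $s=t=0$, so that $L_{(0,0)}(R)$ coincides exactly with the diagonal subring of $M_3(R)$ rather than with some larger subring.
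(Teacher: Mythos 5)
Your proposal is correct and takes essentially the same approach as the paper: the paper's proof likewise just observes that $L_{(0,0)}(R)$ is isomorphic to $R\times R\times R$ (the diagonal subring of $M_3(R)$) and then applies Proposition~\ref{3.2} on direct products. The alternative route you mention via Corollary~\ref{3.5} and Proposition~\ref{3.4}(ii) with the side condition holding trivially is a fine extra check, but not needed.
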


\begin{proof}
Firstly, observe that \(L_{(0,0)}(R)\) is isomorphic to the ring \(R \times R \times R\). Thus, employing Proposition \ref{3.2}, \(\prod_{i \in I} R_i\) is a \(C\Delta\) ring for each \(i \in I\) with all $R_i\cong R$ whenever $R$ is a \(C\Delta\) ring.
\end{proof}

Furthermore, suppose now that \(R \) is a ring and that \(s, t \in C(R)\). Let \(H_{(s,t)}(R)\) be the subring of \(M_3(R)\) defined by
\[
H_{(s,t)}(R) = \left\{ \begin{pmatrix} a & 0 & 0 \\ s c & d &  e \\ 0 & 0 & f \end{pmatrix} \in M_3(R) : a, c, d, e, f \in R, a - d = sc, d - f = te \right\}.
\]
Thus, an easy inspection gives that any element \(A\) of \(H_{(s,t)}(R)\) has the form
\[
A = \begin{pmatrix} sc+te+f & 0 & 0 \\ c & te+f & e \\ 0 & 0 & f \end{pmatrix}.
\]

\begin{lemma}\label{3.8}
Let \(R\) be a ring and \(s, t \in C(R)\). Then,
\[
\Delta(H_{(s,t)}(R)) = \left\{ \begin{pmatrix} a & 0 & 0 \\ c & d & e \\ 0 & 0 & f \end{pmatrix} \in H_{(s,t)}(R) : a, d, f \in \Delta(R), c, e \in R \right\}.
\]
\end{lemma}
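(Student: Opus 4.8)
The plan is to follow the pattern of the proof of Lemma~\ref{3.3}, with one extra preliminary step forced by the two linear relations $a-d=sc$ and $d-f=te$ that cut $H_{(s,t)}(R)$ out of $M_3(R)$. First I would pin down the units of $H_{(s,t)}(R)$. Because the matrices in $H_{(s,t)}(R)$ are block-triangular whose only nonzero off-diagonal slots are $(2,1)$ and $(2,3)$, the map $\pi\colon H_{(s,t)}(R)\to R\times R\times R$ sending a matrix to its diagonal triple $(a,d,f)$ is a ring homomorphism; hence any unit of $H_{(s,t)}(R)$ necessarily has its three diagonal entries in $U(R)$. Conversely, if $a,d,f\in U(R)$, I would write down the evident candidate inverse (diagonal $a^{-1},d^{-1},f^{-1}$, with the two off-diagonal entries adjusted in the obvious way), check that it is a two-sided inverse in $M_3(R)$, and then verify it lies back in $H_{(s,t)}(R)$: using that $s$ is central and $sc=a-d$, the $(2,1)$-slot of the inverse equals $-d^{-1}(sc)a^{-1}=-d^{-1}(a-d)a^{-1}=a^{-1}-d^{-1}$, which is exactly the relation demanded of the inverse, and symmetrically for the $(2,3)$-slot using $t$ and $d-f=te$. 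This identifies $U(H_{(s,t)}(R))$ as precisely the matrices in $H_{(s,t)}(R)$ whose diagonal entries are units of $R$.

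Granting this, both inclusions drop out by the same computation as in Lemma~\ref{3.3}. For the inclusion $\supseteq$, let $A\in H_{(s,t)}(R)$ have $a,d,f\in\Delta(R)$ and let $B\in U(H_{(s,t)}(R))$ be arbitrary, with diagonal $(x,z,v)$; by the previous paragraph $x,z,v\in U(R)$, so $I_3+AB\in H_{(s,t)}(R)$ has diagonal $(1+ax,\,1+dz,\,1+fv)$, a triple of units of $R$ since $a,d,f\in\Delta(R)$, whence $I_3+AB\in U(H_{(s,t)}(R))$; as $B$ was arbitrary, $A\in\Delta(H_{(s,t)}(R))$. For the inclusion $\subseteq$, let $A\in\Delta(H_{(s,t)}(R))$ have diagonal $(a,d,f)$ and test it against the scalar units $xI_3$ with $x\in U(R)$ (which certainly lie in $U(H_{(s,t)}(R))$): then $I_3+A(xI_3)\in U(H_{(s,t)}(R))$, so by the unit characterization its diagonal entries $1+ax,\,1+dx,\,1+fx$ are units of $R$ for every $x\in U(R)$, i.e.\ $a,d,f\in\Delta(R)$.

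The only point that is not pure bookkeeping is the unit description in the first step --- concretely, verifying that the naive inverse of a diagonally-invertible element of $H_{(s,t)}(R)$ still satisfies the two defining relations. Once the identity $-d^{-1}(sc)a^{-1}=a^{-1}-d^{-1}$ (and its $t$-analogue) is noticed, everything else is mechanical and mirrors the proof of Lemma~\ref{3.3} line for line.
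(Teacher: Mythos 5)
Your proof is correct and takes essentially the same route as the paper's: both inclusions are obtained by computing $I_3+AB$ for units $B$ of $H_{(s,t)}(R)$ and reading off the diagonal entries $1+ax$, $1+dz$, $1+fv$. The only difference is that you explicitly establish the description of $U(H_{(s,t)}(R))$ (checking that the naive inverse satisfies the relations $a-d=sc$ and $d-f=te$) and test with the scalar units $xI_3$ in the reverse inclusion, points the paper's proof uses implicitly; this makes your write-up slightly more complete but not a different argument.
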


\begin{proof}
Assume that \(a, d, f \in \Delta(R)\), and set
\[
A := \begin{pmatrix} a & 0 & 0 \\ c & d & e \\ 0 & 0 & f \end{pmatrix} \in H_{(s,t)}(R)
\]
as well as, for any \(x, z, v \in U(R)\); \(y, u \in R\), set
\[
B := \begin{pmatrix} x & 0 & 0 \\ y & z & u \\ 0 & 0 & v \end{pmatrix} \in U(H_{(s,t)}(R)).
\]
Then, it follows that
\[
I_3 + AB = \begin{pmatrix} 1+ax & 0 & 0 \\ y & 1+dz & u \\ 0 & 0 & 1+fv \end{pmatrix}.
\]
But, since \(1+ax\), \(1+dz\), \(1+fv \in U(R)\), we can get \(I_3 + AB \in U(H_{(s,t)}(R))\) and this yields that \(A \in \Delta(H_{(s,t)}(R))\).

Conversely, suppose that
\[
A := \begin{pmatrix} a & 0 & 0 \\ c & d & e \\ 0 & 0 & f \end{pmatrix} \in \Delta(H_{(s,t)}(R)).
\]
and, for any \(x, z, v \in U(R)\); \(y, u \in R\), set
\[
B := \begin{pmatrix} x & 0 & 0 \\ y & z & u \\ 0 & 0 & v \end{pmatrix} \in U(H_{(s,t)}(R)).
\]
Then, it follows that
\[
I_3 + AB = \begin{pmatrix} 1+ax & 0 & 0 \\ y & 1+dz & u \\ 0 & 0 & 1+fv \end{pmatrix} \in U(H_{(s,t)}(R)).
\]
Therefore, we obtain \(1+ax\), \(1+dz\), \(1+fv \in U(R)\), and hence \(a, d, f \in \Delta(R)\), as wanted.
\end{proof}

Our next chief affirmation is the following one.

\begin{theorem}\label{3.9}
A ring \(R\) is \(C\Delta\) if, and only if, \(H_{(s,t)}(R)\) is a \(C\Delta\) ring.
\end{theorem}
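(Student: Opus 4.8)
The plan is to mimic the structure of the proof of Proposition \ref{3.4}(i) and Corollary \ref{3.5}, but now exploiting the special shape forced on elements of $H_{(s,t)}(R)$. Recall that every $A \in H_{(s,t)}(R)$ is completely determined by the three entries $c, e, f \in R$ via
\[
A = \begin{pmatrix} sc+te+f & 0 & 0 \\ c & te+f & e \\ 0 & 0 & f \end{pmatrix},
\]
and that, by Lemma \ref{3.8}, $\Delta(H_{(s,t)}(R))$ consists precisely of those matrices of the displayed $H$-pattern whose three diagonal entries $a, d, f$ lie in $\Delta(R)$ (with the off-diagonal entries $c, e$ arbitrary in $R$). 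The key observation making the forward direction work is that the central elements of $H_{(s,t)}(R)$ that we need are the scalar matrices $\gamma I_3$ with $\gamma \in C(R)$: such a matrix has $s\cdot 0 = 0 = \gamma - \gamma$ and $t \cdot 0 = 0 = \gamma - \gamma$, so it does lie in $H_{(s,t)}(R)$, and it is central there because it is central in all of $M_3(R)$.

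For the nontrivial implication, assume $R$ is $C\Delta$ and take an arbitrary $A \in H_{(s,t)}(R)$ as above. First I would write $f = \gamma + \rho$ with $\gamma \in C(R)$, $\rho \in \Delta(R)$, using that $R$ is $C\Delta$. The candidate decomposition is $A = C + B$ with $C = \gamma I_3$ and $B = A - \gamma I_3$. Then $C \in C(H_{(s,t)}(R))$ as noted. It remains to check $B \in \Delta(H_{(s,t)}(R))$. Subtracting $\gamma I_3$ replaces the triple $(c, e, f)$ by $(c, e, f - \gamma) = (c, e, \rho)$, so $B$ is again of the $H$-pattern, now with bottom-right entry $\rho \in \Delta(R)$; its middle diagonal entry is $te + \rho$ and its top-left entry is $sc + te + \rho$. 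At this point I must verify that these latter two entries also lie in $\Delta(R)$ — and here is the subtlety. They do \emph{not} lie in $\Delta(R)$ in general, since $sc$ and $te$ are arbitrary elements of $sR$ and $tR$. I expect this to be the main obstacle, and the way around it is to choose the central part more cleverly: rather than splitting $f$ alone, split each of the three diagonal entries $a = sc+te+f$, $d = te+f$, $f$ simultaneously, \emph{but constrain the central parts so that $C$ still lies in $H_{(s,t)}(R)$}. Concretely, write $f = \gamma_3 + \rho_3$; then $te + f = t e + \gamma_3 + \rho_3$, and since $te + \gamma_3 \in C(R)$ is already of the right form only if we set $\gamma_2 := \gamma_3$ and absorb $te$ into the $\Delta$-part (note $te$ need not be central!). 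This shows the honest route is: take $C = \gamma_3 I_3$ and push \emph{everything else} — including $sc$ and $te$ — into $B$, then prove $sc + te + \rho_3$ and $te + \rho_3$ lie in $\Delta(R)$. That forces the extra hypothesis present implicitly, OR, more likely, the intended proof uses that in $H_{(s,t)}(R)$ one is free to also decompose using a genuinely different central matrix; I would therefore carry out the computation of $\Delta(H_{(s,t)}(R))$-membership of $B$ directly from the defining property (for every unit $B'$ of $H_{(s,t)}(R)$, check $I_3 + BB' \in U(H_{(s,t)}(R))$ via the diagonal reduction exactly as in Lemma \ref{3.8}), which only requires the three diagonal entries of $B$ — namely $sc + te + \rho_3$, $te + \rho_3$, $\rho_3$ — to be in $\Delta(R)$; and \emph{this} is where I would invoke that $\Delta(R)$ is closed under addition together with a reduction showing $sc, te$ may be assumed in $\Delta(R)$ because the off-diagonal freedom lets us replace $A$ by a conjugate (Proposition \ref{2.19}, Corollary \ref{2.21}) whose relevant corner entries vanish.

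For the converse direction, assume $H_{(s,t)}(R)$ is $C\Delta$ and let $a \in R$ be arbitrary. I would feed in the scalar matrix $A = a I_3$, which lies in $H_{(s,t)}(R)$ (same check as above: $s \cdot 0 = a - a$, $t \cdot 0 = a - a$). By hypothesis $A = C + B$ with $C \in C(H_{(s,t)}(R))$ and $B \in \Delta(H_{(s,t)}(R))$. Reading off the $(3,3)$-entries and using Lemma \ref{3.8} (the $(3,3)$-entry of $B$ lies in $\Delta(R)$) together with the fact that the $(3,3)$-entry of any central element of $H_{(s,t)}(R)$ is a central element of $R$ (project onto the bottom-right corner, which is a ring homomorphism $H_{(s,t)}(R) \to R$), one obtains $a = c' + b'$ with $c' \in C(R)$ and $b' \in \Delta(R)$. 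Hence $R$ is $C\Delta$. This half is entirely routine and mirrors Corollary \ref{3.5}; the real content — and the step I expect to cost the most care — is verifying, in the forward direction, that the $\Delta$-part $B$ one constructs genuinely sits in $\Delta(H_{(s,t)}(R))$, i.e.\ that the constraints $a - d = sc$, $d - f = te$ are compatible with simultaneously making all three diagonal entries land in $\Delta(R)$, which is precisely what the $H$-pattern (as opposed to the unconstrained $L_{(s,t)}(R)$ of Example \ref{3.6}) is engineered to guarantee.
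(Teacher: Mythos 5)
Your converse direction is correct and is essentially the paper's argument: feed in the scalar matrix $aI_3$, use Lemma~\ref{3.8} for the $(3,3)$-entry of the $\Delta$-part, and project onto the bottom-right corner to get the central part. The forward direction, however, has a genuine gap. You start from the (incorrect) premise that the central elements of $H_{(s,t)}(R)$ one should use are the scalar matrices $\gamma I_3$; you then rightly observe that with $C=\gamma I_3$ the remaining diagonal entries $sc+te+\rho$ and $te+\rho$ of $B=A-\gamma I_3$ need not lie in $\Delta(R)$, and you try to escape by invoking Proposition~\ref{2.19}/Corollary~\ref{2.21} to ``assume $sc,te\in\Delta(R)$'' after replacing $A$ by a conjugate whose corner entries vanish. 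That reduction is not available: conjugation only says that $A$ has a $C\Delta$-decomposition iff each conjugate $pAp^{-1}$ does; it gives no way to alter the constraint $a-d=sc$, $d-f=te$ with $c,e$ arbitrary (indeed, a matrix of the $H$-pattern with $c=e=0$ is scalar, and a general $A$ has no such conjugate). So no decomposition is actually produced, and the central difficulty you yourself isolated is left unresolved.

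The missing idea --- and what the paper does --- is to use \emph{non-scalar} central elements of $H_{(s,t)}(R)$: any matrix of the $H$-pattern all of whose entries lie in $C(R)$ is central in $H_{(s,t)}(R)$ (a direct computation using $s,t\in C(R)$). Decompose the free parameters $c=c_4+r_4$, $e=c_5+r_5$, $f=c_3+r_3$ with $c_i\in C(R)$, $r_i\in\Delta(R)$, and set
\[
C=\begin{pmatrix} sc_4+tc_5+c_3 & 0 & 0\\ c_4 & tc_5+c_3 & c_5\\ 0 & 0 & c_3\end{pmatrix},
\qquad
B=A-C=\begin{pmatrix} sr_4+tr_5+r_3 & 0 & 0\\ r_4 & tr_5+r_3 & r_5\\ 0 & 0 & r_3\end{pmatrix}.
\]
Both matrices satisfy the defining relations of $H_{(s,t)}(R)$, $C$ is central there, and the diagonal entries of $B$ are $sr_4+tr_5+r_3$, $tr_5+r_3$, $r_3$, which lie in $\Delta(R)$ (here one uses that $\Delta(R)$ is a subring and that multiplying elements of $\Delta(R)$ by the fixed central scalars $s,t$ keeps them in $\Delta(R)$ --- a closure property the paper's proof uses without comment in the step ``Letting $r_1-r_2=sr_4$, $r_2-r_3=tr_5$''). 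Hence $B\in\Delta(H_{(s,t)}(R))$ by Lemma~\ref{3.8} and $A=C+B$ is the required decomposition. In short: the $H$-constraints are absorbed by putting the central parts of $c,e$ into $C$, so that only $sr_4,tr_5$ (not $sc,te$) contaminate the diagonal of $B$; this is exactly the step your scalar-central-part strategy cannot reach.
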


\begin{proof}
Assume that \(R\) is a \(C\Delta\) ring. Put
\[
A := \begin{pmatrix} a & 0 & 0 \\ c & d & e \\ 0 & 0 & f \end{pmatrix} \in H_{(s,t)}(R).
\]
Thus, one follows that \(a = c_1 + r_1\), \(d = c_2 + r_2\), \(f = c_3 + r_3\), \(c = c_4 + r_4\), \(e = c_5 + r_5\) with \(\{c_1, c_2, c_3, c_4, c_5\} \subseteq C(R)\) and \(\{r_1, r_2, r_3, r_4, r_5\} \subseteq \Delta(R)\). Letting \(c_1 - c_2 = sc_4\), \(c_2 - c_3 = tc_5\), \(r_1 - r_2 = sr_4\) and \(r_2 - r_3 = tr_5\), we then construct
\[
C := \begin{pmatrix} c_1 & 0 & 0 \\ c_4 & c_2 & c_5 \\ 0 & 0 & c_3 \end{pmatrix} \quad \text{and} \quad B := \begin{pmatrix} r_1 & 0 & 0 \\ r_4 & r_2 & r_5 \\ 0 & 0 & r_3 \end{pmatrix}.
\]
So, we receive \(C \in C(H_{(s,t)}(R))\) and \(B \in \Delta(H_{(s,t)}(R))\). Hence, \(A = C + B\) is the desired \(C\Delta\)-decomposition of \(A\).

Conversely, suppose that \(H_{(s,t)}(R)\) is a \(C\Delta\) ring and choose \(a \in R\). Then,
\[
A := \begin{pmatrix} a & 0 & 0 \\ 0 & a & 0 \\ 0 & 0 & a \end{pmatrix} \in H_{(s,t)}(R),
\]
and it has the \(C\Delta\)-decomposition \(A = C + B\), where
\[
C = \begin{pmatrix} x & 0 & 0 \\ y & z & u \\ 0 & 0 & v \end{pmatrix} \in C(H_{(s,t)}(R)) \quad \text{and} \quad B = \begin{pmatrix} r_1 & 0 & 0 \\ r_2 & r_3 & r_4 \\ 0 & 0 & r_5 \end{pmatrix} \in \Delta(H_{(s,t)}(R)),
\]
with \(\{y, u, v\} \subseteq C(R)\) and \(\{r_1, r_2, r_3\} \subseteq \Delta(R)\). Consequently, \(a = v + r_5\) is the asked \(C\Delta\)-decomposition of the element \(a\).
\end{proof}

Given a ring $R$ and a central elements $s$ of $R$, the $4$-tuple $\begin{pmatrix}
	R & R\\
	R & R
\end{pmatrix}$ becomes a ring denoted by $K_{s}(R)$ with addition component-wise and with multiplication defined by
$$\begin{pmatrix}
	a_{1} & x_{1}\\
	y_{1} & b_{1}
\end{pmatrix}\begin{pmatrix}
	a_{2} & x_{2}\\
	y_{2} & b_{2}
\end{pmatrix}=\begin{pmatrix}
	a_{1}a_{2}+sx_{1}y_{2} & a_{1}x_{2}+x_{1}b_{2} \\
	y_{1}a_{2}+b_{1}y_{2} & sy_{1}x_{2}+b_{1}b_{2}
\end{pmatrix}.$$
Then, $K_{s}(R)$ is called a {\it generalized matrix ring} over $R$.

\medskip

We now proceed by proving the following claim.

\begin{proposition}\label{3.10}
Let \(R\) be a commutative ring. Then, the following hold:
\begin{enumerate}
\item
\(C(K_0(R))\) consists of all scalar matrices.
\item
 \(U(K_0(R)) = \{\begin{pmatrix} a & b \\ c & d \end{pmatrix} \in K_0(R) : a, d \in U(R)\}\).
\item
\(\Delta(K_0(R)) = \{\begin{pmatrix} a & b \\ c & d \end{pmatrix} \in K_0(R) : a, d \in \Delta(R)\}\).
\end{enumerate}
\end{proposition}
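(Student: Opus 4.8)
The plan is to establish the three parts in the order $(1) \to (2) \to (3)$, since the description of $\Delta$ naturally rests on the description of the units. Throughout I would exploit the fact that $K_0(R)$ is just the ``ordinary'' matrix ring $M_2(R)$ but with the off-diagonal multiplication twisted by $s = 0$; concretely, the product formula collapses to
\[
\begin{pmatrix} a_1 & x_1 \\ y_1 & b_1 \end{pmatrix}\begin{pmatrix} a_2 & x_2 \\ y_2 & b_2 \end{pmatrix} = \begin{pmatrix} a_1 a_2 & a_1 x_2 + x_1 b_2 \\ y_1 a_2 + b_1 y_2 & b_1 b_2 \end{pmatrix},
\]
so the two diagonal coordinates behave multiplicatively and independently, while the off-diagonal entries are ``bimodule-like''. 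This is the structural observation that drives all three computations.

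For (1), I would take a matrix $C = \begin{pmatrix} a & b \\ c & d \end{pmatrix}$ assumed central and test it against the idempotent-like elements $E_{11} = \begin{pmatrix} 1 & 0 \\ 0 & 0 \end{pmatrix}$, $E_{22}$, and the off-diagonal matrices $E_{12} = \begin{pmatrix} 0 & 1 \\ 0 & 0 \end{pmatrix}$, $E_{21}$. Commuting with $E_{11}$ and $E_{22}$ forces $b = c = 0$; commuting with $E_{12}$ forces $a = d$; commuting with $\begin{pmatrix} r & 0 \\ 0 & r \end{pmatrix}$ for $r \in R$ (using commutativity of $R$) is automatic. Hence $C = aI_2$ with $a \in R$, and conversely every scalar matrix is central since $R$ is commutative. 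For (2), the ``only if'' direction: if $A = \begin{pmatrix} a & b \\ c & d \end{pmatrix}$ is a unit with inverse $\begin{pmatrix} a' & b' \\ c' & d' \end{pmatrix}$, reading off the $(1,1)$ and $(2,2)$ entries of both $AA^{-1} = I$ and $A^{-1}A = I$ gives $aa' = a'a = 1$ and $dd' = d'd = 1$, so $a, d \in U(R)$. For the ``if'' direction, given $a, d \in U(R)$ I would write down an explicit inverse: the candidate is $\begin{pmatrix} a^{-1} & -a^{-1}bd^{-1} \\ -d^{-1}ca^{-1} & d^{-1} \end{pmatrix}$, and verify by the twisted product (with $s=0$) that this is a two-sided inverse — a short direct check.

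For (3), I would use the characterization ``$A \in \Delta(K_0(R))$ iff $I_2 + AB \in U(K_0(R))$ for all $B \in U(K_0(R))$'' together with part (2). If $A = \begin{pmatrix} a & b \\ c & d \end{pmatrix}$ with $a, d \in \Delta(R)$, then for any unit $B = \begin{pmatrix} x & y \\ z & w \end{pmatrix}$ (so $x, w \in U(R)$ by (2)) the product $AB$ has $(1,1)$-entry $ax$ and $(2,2)$-entry $dw$; since $a \in \Delta(R)$ we get $1 + ax \in U(R)$, similarly $1 + dw \in U(R)$, so $I_2 + AB$ has both diagonal entries in $U(R)$ and hence lies in $U(K_0(R))$ by (2) — thus $A \in \Delta(K_0(R))$. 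Conversely, if $A \in \Delta(K_0(R))$, testing against the diagonal units $B = \begin{pmatrix} x & 0 \\ 0 & 1 \end{pmatrix}$ with $x \in U(R)$ arbitrary forces $1 + ax \in U(R)$ for all such $x$, i.e. $a \in \Delta(R)$; symmetrically $d \in \Delta(R)$; the off-diagonal entries $b, c$ are unconstrained. This matches the claimed set.

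The routine parts are the explicit verifications that the candidate inverse works and that $I_2 + AB$ is triangular-diagonal in the relevant entries; the only place requiring slight care is keeping track of which direction of the two-sided unit/inverse conditions one reads, and remembering to use commutativity of $R$ at exactly the point where central elements are shown to be scalar. I do not anticipate a genuine obstacle: the $s = 0$ specialization makes $K_0(R)$ close enough to $M_2(R)$ that every step is a short computation, and part (2) is the key lemma that makes part (3) immediate.
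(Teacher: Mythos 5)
Your proposal is correct and, on the substantive part (3), takes essentially the same route as the paper: test $A$ against the definition of $\Delta$ via $I_2+AB$ with $B$ a unit, and use part (2) to recognize units of $K_0(R)$ by their diagonal entries (the paper uses arbitrary units $B$ where you specialize to diagonal ones, which is an inessential difference). The only divergence is that you prove (1) and (2) by direct computation (testing centrality against matrix units, exhibiting the explicit inverse $\begin{pmatrix} a^{-1} & -a^{-1}bd^{-1} \\ -d^{-1}ca^{-1} & d^{-1} \end{pmatrix}$), whereas the paper simply cites Krylov--Tuganbaev and Tang--Li--Zhou for those two parts; your self-contained verifications are correct.
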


\begin{proof}
\begin{enumerate}
\item
It follows at once from \cite[Lemma 1.1]{4}.
\item
It follows at once from \cite[Lemma 3.1]{5}.
\item
Assume that \(a, d \in \Delta(R)\), and let
	\[
	A = \begin{pmatrix} a & b \\ c & d \end{pmatrix} \in K_0(R).
	\]
	For any element
	\[
	B = \begin{pmatrix} a^\prime & b^\prime \\ c^\prime & d^\prime \end{pmatrix} \in U(K_0(R)),
	\]
	we compute
	\[
	I_2 + AB = \begin{pmatrix} 1+aa' & ab' + bd' \\ ca' + dc' & 1+dd' \end{pmatrix}.
	\]
	By hypothesis, \(1+aa'\), \(1+dd' \in U(R)\). Thus, \(I_2 + AB \in U(K_0(R))\) implying that \(A \in \Delta(K_0(R))\).

Conversely, suppose that \(a, d \in R\) and
	\[
	A = \begin{pmatrix} a & b \\ c & d \end{pmatrix} \in \Delta(K_0(R)).
	\]
	For any \(a', d' \in U(R)\) and \(c', b' \in R\), we set
	\[
	B ;= \begin{pmatrix} a' & b' \\ c' & d' \end{pmatrix} \in U(K_0(R)).
	\]
We, furthermore, calculate that
\[
I_2 + AB = \begin{pmatrix} 1+aa' & ab'+bd' \\ ca'+dc' & 1+dd' \end{pmatrix} \in U(K_0(R)).
\]
So, we have \(1+aa'\), \(1+dd'\) \(\in U(R)\) showing that \(a, d \in \Delta(R)\), as required.
\end{enumerate}
\end{proof}

\begin{proposition}\label{3.11}
A ring \(R\) is \(C\Delta\) if, and only if, \(D_n(K_0(R))\) is a \(C\Delta\) ring for any $n\geq 2$.
\end{proposition}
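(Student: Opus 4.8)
The plan is to reduce the statement about $D_n(K_0(R))$ to the already-established equivalence between $R$ being $C\Delta$ and $D_n(S)$ being $C\Delta$ (Proposition~\ref{3.1}) applied with $S = K_0(R)$, and to interface this with the description of $\Delta(K_0(R))$ and $C(K_0(R))$ from Proposition~\ref{3.10}. Concretely, by Proposition~\ref{3.1} (the equivalence $(i)\Leftrightarrow(ii)$ there, with the base ring taken to be $K_0(R)$), the ring $D_n(K_0(R))$ is $C\Delta$ if, and only if, $K_0(R)$ is $C\Delta$. So the whole proposition collapses to the single claim: \emph{$R$ is $C\Delta$ if, and only if, $K_0(R)$ is $C\Delta$}, under the standing hypothesis that $R$ is commutative.

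For the forward direction, assume $R$ is $C\Delta$ and take an arbitrary $A = \begin{pmatrix} a & b \\ c & d \end{pmatrix} \in K_0(R)$. Write $a = x_1 + r_1$ and $d = x_2 + r_2$ with $x_1, x_2 \in C(R)$ and $r_1, r_2 \in \Delta(R)$. Since $R$ is commutative, $C(R) = R$, but the point is to land the central part inside $C(K_0(R))$, which by Proposition~\ref{3.10}(i) consists precisely of the scalar matrices $\lambda I_2$; this forces us to take a \emph{single} scalar for both diagonal corners. So instead I would first split $a$ as $a = x + r$ with $x \in R$, $r \in \Delta(R)$, and then write
\[
A = \begin{pmatrix} x & 0 \\ 0 & x \end{pmatrix} + \begin{pmatrix} a-x & b \\ c & d-x \end{pmatrix}.
\]
The first summand is a scalar matrix, hence central in $K_0(R)$ by Proposition~\ref{3.10}(i). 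For the second summand to lie in $\Delta(K_0(R))$ we need, by Proposition~\ref{3.10}(iii), that its two diagonal entries $a-x = r$ and $d-x$ belong to $\Delta(R)$. The entry $a - x = r$ is fine by construction, but $d - x = d - a + r$ need not be in $\Delta(R)$ for an arbitrary choice of decomposition of $a$. This is exactly the obstruction that surfaced in Example~\ref{2.26} and Proposition~\ref{2.25} for $T_2(R)$: one needs a \emph{common} $x$ with $a - x, d - x \in \Delta(R)$.

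Hence the honest statement is that $K_0(R)$ is $C\Delta$ if, and only if, for every pair $a,d \in R$ there is a common $x$ with $a-x, d-x \in \Delta(R)$ — precisely the $T_2(R)$ condition of Proposition~\ref{2.25} — and this is strictly stronger than $R$ being $C\Delta$. I therefore expect that the clean biconditional as stated is \emph{not} what the authors intend literally; the intended reading is presumably that $R$ is $C\Delta$ with the additional diagonal-matching hypothesis (as in Proposition~\ref{3.4}(ii) and Theorem~\ref{3.9}), or that the ambient ring in Proposition~\ref{3.1} absorbs the issue. The main obstacle, then, is reconciling the scalar-only shape of $C(K_0(R))$ with the corner-wise shape of $\Delta(K_0(R))$: the proof must either impose the common-center condition explicitly, or restrict attention to those $A$ whose diagonal difference $a-d$ already lies in $\Delta(R)$, mirroring the device used in Example~\ref{3.6}. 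Modulo that caveat, the converse direction is routine: if $D_n(K_0(R))$ is $C\Delta$, feed it the scalar matrix with all diagonal blocks equal to $\mathrm{diag}(a,a)$, extract a $C\Delta$-decomposition, read off the $(1,1)$-corner of the bottom-right block, and invoke Proposition~\ref{3.10} to conclude $a \in C(R) + \Delta(R)$, i.e. $R$ is $C\Delta$.
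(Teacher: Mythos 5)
Your reduction through Proposition~\ref{3.1} (applied with base ring $K_0(R)$) is exactly right, and the obstruction you isolate in the forward direction is not a mere caveat --- it is fatal, and in fact stronger than you state. Since $C(K_0(R))$ consists of scalar matrices and both diagonal entries of any element of $\Delta(K_0(R))$ lie in $\Delta(R)$ (this needs no commutativity: because $s=0$, the diagonal projection $K_0(R)\to R\times R$ is a ring homomorphism, so units of $K_0(R)$ have unit diagonal entries), a $C\Delta$-decomposition of $\begin{pmatrix} a & 0 \\ 0 & 0 \end{pmatrix}$ in $K_0(R)$ forces a single central $x$ with $a-x\in\Delta(R)$ and $-x\in\Delta(R)$, whence $a\in\Delta(R)$. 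Thus $K_0(R)$ can be $C\Delta$ only if $R=\Delta(R)$, which is impossible in a unital ring (if $1\in\Delta(R)$ then $0=1-1\cdot 1$ would be a unit). Consequently $K_0(R)$ is never $C\Delta$, so by Proposition~\ref{3.1} neither is $D_n(K_0(R))$, and the forward implication of Proposition~\ref{3.11} fails already for $R=\mathbb{Z}$ --- precisely what one gets by combining the paper's own Example~\ref{3.12} with Proposition~\ref{3.1}. So your refusal to push the forward direction through is justified; only the converse survives, and your sketch of it (feed in the scalar matrix, use that the diagonal entry of a central element of $D_n(K_0(R))$ is a central scalar of $K_0(R)$ while the diagonal entry of the $\Delta$-part has its $R$-entries in $\Delta(R)$) is sound and coincides with the paper's argument for that half.

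For the record, the paper's proof of the forward half stumbles exactly where you predicted: for $A=\begin{pmatrix} a & b \\ 0 & a\end{pmatrix}\in D_2(K_0(R))$, whose entries $a,b$ are themselves elements of $K_0(R)$, it writes $a=c_1+r_1$ with $c_1\in C(R)$ and $r_1\in\Delta(R)$, and then places $c_1I$ in $C(D_2(K_0(R)))$ and $\begin{pmatrix} r_1 & b \\ 0 & r_1\end{pmatrix}$ in $\Delta(D_2(K_0(R)))$; read correctly, this is nothing but the unjustified assumption that $K_0(R)$ itself is $C\Delta$, which Example~\ref{3.12} refutes. So you should not try to close the gap: as printed, the proposition is inconsistent with Proposition~\ref{3.1} together with Example~\ref{3.12}, and the most one can honestly assert is the single implication that $D_n(K_0(R))$ being $C\Delta$ implies $R$ is $C\Delta$ (the equivalence would require the common-center condition you formulate, which, as shown above, is never satisfiable). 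Your analysis is therefore essentially a refutation of the statement as written rather than a proof, and it is more accurate than the paper's own argument.
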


\begin{proof}
Let \(R\) be a \(C\Delta\) ring. We assume for a moment that \(n = 2\). Suppose
\[
A := \begin{pmatrix} a & b \\ 0 & a \end{pmatrix} \in D_2(K_0(R)).
\]
By assumption, \(a = c_1 + r_1\), where \(c_1 \in C(R)\) and \(r_1 \in \Delta(R)\). Suppose also that
\[
C := \begin{pmatrix} c_1 & 0 \\ 0 & c_1 \end{pmatrix} \in C(D_2(K_0(R))) \quad \text{and} \quad B := \begin{pmatrix} r_1 & b\\ 0 & r_1 \end{pmatrix} \in \Delta(D_2(K_0(R))).
\]
Hence, \(A = C + B\) is the required \(C\Delta\)-decomposition of the matrix \(A\).

Conversely, let \(D_2(K_0(R))\) be a \(C\Delta\) ring and \(a \in R\). Then,
\[
A := \begin{pmatrix} a & 0 \\ 0 & a \end{pmatrix} \in D_2(K_0(R))
\]
has the \(C\Delta\)-decomposition \(A = C + B\) with
\[
C := \begin{pmatrix} c_1 & 0 \\ 0 & c_1 \end{pmatrix} \in C(D_2(K_0(R))) \quad \text{and} \quad B := \begin{pmatrix} r_1 & b_1 \\ 0 & r_1 \end{pmatrix} \in \Delta(D_2(K_0(R))),
\]
where \(c_1 \in C(R)\) and \(r_1 \in \Delta(R)\). By comparing components of matrices, we get \(a = c_1 + r_1\), and obviously it is the required \(C\Delta\)-decomposition of the element \(a\).

The general case for any $n>2$ follows by induction, which we leave to the interested reader for a direct check.
\end{proof}

\begin{example}\label{3.12}
Note that \(K_0(R)\) need {\it not} be a \(C\Delta\) ring. In fact, assume the contrary that
\[
A := \begin{pmatrix} 1 & 0 \\ 0 & 0 \end{pmatrix} \in K_0(\mathbb{Z})
\]
has a \(C\Delta\)-decomposition as \(A = C + B\), where \(c \in C(K_0(\mathbb{Z}))\) and \(B \in \Delta(K_0(\mathbb{Z}))\). Then, we infer
\[
C = \begin{pmatrix} x & 0 \\ 0 & x \end{pmatrix} \quad \text{and} \quad B = \begin{pmatrix} 1-x & 0 \\ 0 & -x\end{pmatrix},
\]
and these, in turn, imply \(x = 1\) or \(x \in \Delta(\mathbb{Z}) = \{0\}\), that is, the expected contradiction, thus substantiating our claim.
\end{example}

Let $R$ be a ring and $M$ a bi-module over $R$. The {\it trivial extension} of $R$ and $M$ is stated as
\[ T(R, M) = \{(r, m) : r \in R \text{ and } m \in M\}, \]
with addition defined component-wise and multiplication defined by
\[ (r, m)(s, n) = (rs, rn + ms). \]
The trivial extension $T(R, M)$ is isomorphic to the subring
\[ \left\{ \begin{pmatrix} r & m \\ 0 & r \end{pmatrix} : r \in R \text{ and } m \in M \right\} \]
of the formal $2 \times 2$ matrix ring $\begin{pmatrix} R & M \\ 0 & R \end{pmatrix}$, and also $T(R, R) \cong R[x]/\left\langle x^2 \right\rangle$. We, likewise, note that the set of units of the trivial extension $T(R, M)$ is
\[ U(T(R, M)) = T(U(R), M). \]
Moreover, according to \cite {3}, we have
\[ \Delta(T(R, M)) = T(\Delta(R), M). \]

So, the following is valid.

\begin{proposition}\label{3.15}
If \(T(R,M)\) is a \(C\Delta\) ring, then \(R\) is a \(C\Delta\) ring. The converse holds if, for any \(c \in C(R)\) and \(m \in M\), we have \(cm = mc\).
\end{proposition}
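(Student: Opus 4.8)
The plan is to work through the matrix description of $T(R,M)$ together with the two identities recorded just above the statement, namely $U(T(R,M)) = T(U(R),M)$ and $\Delta(T(R,M)) = T(\Delta(R),M)$, and to route the forward implication through Proposition~\ref{4.3}(i).

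For the forward direction, I would first note that $I := \{(0,m) : m \in M\} = T(0,M)$ is an ideal of $T(R,M)$ with $I^2 = 0$, since $(0,m)(0,n) = (0,0)$; being nilpotent, $I \subseteq J(T(R,M))$. As $T(R,M)/I \cong R$, Proposition~\ref{4.3}(i) immediately gives that $R$ is $C\Delta$. (One may equally argue by hand: given $a \in R$, apply the hypothesis to $(a,0)$ to write $(a,0) = (c,m) + (r,n)$ with $(c,m)$ central and $(r,n) \in \Delta(T(R,M)) = T(\Delta(R),M)$; comparing first coordinates yields $a = c + r$ with $r \in \Delta(R)$, and testing centrality of $(c,m)$ against the elements $(s,0)$, $s \in R$, forces $cs = sc$, so $c \in C(R)$.)

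For the converse, assume additionally that $cm = mc$ for all $c \in C(R)$ and $m \in M$. Take an arbitrary $(a,m) \in T(R,M)$ and use that $R$ is $C\Delta$ to write $a = c + r$ with $c \in C(R)$, $r \in \Delta(R)$. Then decompose $(a,m) = (c,0) + (r,m)$. The summand $(r,m)$ lies in $T(\Delta(R),M) = \Delta(T(R,M))$ by the stated formula, so it remains only to check $(c,0) \in C(T(R,M))$. For any $(s,n) \in T(R,M)$ one computes $(c,0)(s,n) = (cs, cn)$ and $(s,n)(c,0) = (sc, nc)$; centrality of $c$ in $R$ gives $cs = sc$, while the extra hypothesis gives $cn = nc$, so $(c,0)$ is central. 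Hence $(a,m) = (c,0) + (r,m)$ is the desired $C\Delta$-decomposition, proving $T(R,M)$ is $C\Delta$.

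The only genuinely delicate point is the role of the commutation condition $cm = mc$ in the converse: it is precisely what promotes a central element $c$ of $R$ to a central element $(c,0)$ of $T(R,M)$. Without it, $C(T(R,M))$ can be strictly smaller than $\{(c,0) : c \in C(R)\}$ and the decomposition $(a,m) = (c,0)+(r,m)$ need not have its first summand central, so the argument breaks down; everything else reduces to the routine coordinate computations afforded by the known descriptions of $U$ and $\Delta$ for trivial extensions.
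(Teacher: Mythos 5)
Your proof is correct and follows essentially the same route as the paper: the converse uses the identical decomposition $(a,m)=(c,0)+(r,m)$ together with $\Delta(T(R,M))=T(\Delta(R),M)$, with the hypothesis $cm=mc$ serving exactly to make $(c,0)$ central. The forward direction, which the paper dismisses as easy, you fill in correctly via the square-zero ideal $T(0,M)\subseteq J(T(R,M))$ and Proposition~\ref{4.3}(i) (or the equivalent coordinate argument), so nothing is missing.
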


\begin{proof}
Assuming that \(T(R,M)\) is a \(C\Delta\) ring, it easily follows that $R$ is \(C\Delta\) too.

Conversely, let \(R\) be a \(C\Delta\) ring and let \(X = (a,m) \in T(R,M)\) be an arbitrary element. Then, \(a \in R\) and, by hypothesis, we write \(a = c + r\), where \(c \in C(R)\) and \(r \in \Delta(R)\). Thus, \[X = (a,m) = (c+r,m) = (c,0) + (r,m),\] where \((c,0) \in C(T(R,M))\) and \(T(\Delta(R),M) = \Delta(T(R,M))\). Putting \(C = (c,0)\) and \(D = (r,m)\), we conclude that \(X = C + D\) is a \(C\Delta\)-decomposition of \(X\), as we need.
\end{proof}

As an immediate consequence, we receive:

\begin{corollary}\label{3.16}
The trivial extension $T(R, R)$ is a $C\Delta$ ring if, and only if, $R$ is a $C\Delta$ ring.	
\end{corollary}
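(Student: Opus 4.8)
The plan is to obtain this as a direct specialization of Proposition~\ref{3.15} applied to the bi-module $M = R$. Recall that the trivial extension $T(R,R)$ is precisely the case $M = R$ of the general construction $T(R,M)$, with the bi-module structure given by left and right multiplication in $R$ itself. By Proposition~\ref{3.15}, if $T(R,R)$ is a $C\Delta$ ring then $R$ is a $C\Delta$ ring, so one implication is immediate with no further work.

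For the reverse implication, I would invoke the sufficient condition in the second sentence of Proposition~\ref{3.15}: the converse holds provided that for any $c \in C(R)$ and $m \in M$ we have $cm = mc$. With $M = R$, this requirement reads $cm = mc$ for all $c \in C(R)$ and $m \in R$, which is exactly the defining property of a central element. Hence the hypothesis of Proposition~\ref{3.15} is satisfied automatically, and therefore $R$ being $C\Delta$ forces $T(R,R)$ to be $C\Delta$ as well. Combining the two directions yields the stated equivalence.

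The only point worth spelling out in the write-up is the verification that, under the identification $M = R$, the bi-module action is the ordinary two-sided multiplication, so that centrality of $c$ in the ring $R$ literally coincides with the commutativity condition $cm = mc$ demanded by Proposition~\ref{3.15}. This is routine: it follows directly from the definition of $T(R,R)$ given above (where the multiplication $(r,m)(s,n) = (rs, rn + ms)$ already uses multiplication in $R$ for the second coordinate), so there is no genuine obstacle here — the corollary is essentially a one-line consequence once the bi-module is identified correctly. I would therefore keep the proof to a couple of sentences, citing Proposition~\ref{3.15} and noting that the extra hypothesis is vacuous in this case.
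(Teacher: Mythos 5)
Your proposal is correct and matches the paper's route exactly: the paper derives Corollary~\ref{3.16} as an immediate consequence of Proposition~\ref{3.15} with $M=R$, where the hypothesis $cm=mc$ for $c\in C(R)$, $m\in R$ holds automatically by centrality. Nothing further is needed.
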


Let $\alpha$ be an endomorphism of $R$ and $n$ a positive integer. It was defined by Nasr-Isfahani in \cite{1} the {\it skew triangular matrix ring} like this:

$${\rm T}_{n}(R,\alpha )=\left\{ \left. \begin{pmatrix}
	a_{0} & a_{1} & a_{2} & \cdots & a_{n-1} \\
	0 & a_{0} & a_{1} & \cdots & a_{n-2} \\
	0 & 0 & a_{0} & \cdots & a_{n-3} \\
	\ddots & \ddots & \ddots & \vdots & \ddots \\
	0 & 0 & 0 & \cdots & a_{0}
\end{pmatrix} \right| a_{i}\in R \right\}$$
with addition point-wise and multiplication given by
\begin{align*}
	&\begin{pmatrix}
		a_{0} & a_{1} & a_{2} & \cdots & a_{n-1} \\
		0 & a_{0} & a_{1} & \cdots & a_{n-2} \\
		0 & 0 & a_{0} & \cdots & a_{n-3} \\
		\ddots & \ddots & \ddots & \vdots & \ddots \\
		0 & 0 & 0 & \cdots & a_{0}
	\end{pmatrix}\begin{pmatrix}
		b_{0} & b_{1} & b_{2} & \cdots & b_{n-1} \\
		0 & b_{0} & b_{1} & \cdots & b_{n-2} \\
		0 & 0 & b_{0} & \cdots & b_{n-3} \\
		\ddots & \ddots & \ddots & \vdots & \ddots \\
		0 & 0 & 0 & \cdots & b_{0}
	\end{pmatrix}  =\\
	& \begin{pmatrix}
		c_{0} & c_{1} & c_{2} & \cdots & c_{n-1} \\
		0 & c_{0} & c_{1} & \cdots & c_{n-2} \\
		0 & 0 & c_{0} & \cdots & c_{n-3} \\
		\ddots & \ddots & \ddots & \vdots & \ddots \\
		0 & 0 & 0 & \cdots & c_{0}
	\end{pmatrix},
\end{align*}
where $$c_{i}=a_{0}\alpha^{0}(b_{i})+a_{1}\alpha^{1}(b_{i-1})+\cdots +a_{i}\alpha^{i}(b_{0}),~~ 1\leq i\leq n-1.$$ We denote the elements of ${\rm T}_{n}(R, \alpha)$ by $(a_{0},a_{1},\ldots , a_{n-1})$. If $\alpha $ is the identity endomorphism, then ${\rm T}_{n}(R,\alpha )$ is a subring of upper triangular matrix ring ${\rm T}_{n}(R)$.

\medskip

The following criterion is true:

\begin{proposition}\label{3.17}
Let $R$ be a ring. Then, the following are equivalent:
\begin{enumerate}
\item
$R$ is a $C\Delta$ ring.
\item
${\rm T}_{n}(R,\alpha )$ is a $C\Delta$ ring.
\end{enumerate}
\end{proposition}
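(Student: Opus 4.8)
\textbf{Proof plan for Proposition \ref{3.17}.} The plan is to mimic the structure of Propositions \ref{3.11} and \ref{3.9}: exhibit the skew triangular matrix ring as a ring whose centre, unit group and $\Delta$ all have a clean ``coordinate-wise'' description in terms of the corresponding objects in $R$, and then read off the $C\Delta$ property from these descriptions. First I would record the three structural facts I need. Writing an element of $\mathrm{T}_n(R,\alpha)$ as $(a_0,a_1,\dots,a_{n-1})$, I would show: (a) $(a_0,\dots,a_{n-1})$ is central in $\mathrm{T}_n(R,\alpha)$ if and only if $a_0\in C(R)$, $a_0$ commutes suitably with the action of $\alpha$, and the higher coordinates are forced (in the identity-endomorphism subcase this just says $a_0\in C(R)$ and $a_1,\dots,a_{n-1}$ are arbitrary central-compatible entries); (b) $(a_0,\dots,a_{n-1})\in U(\mathrm{T}_n(R,\alpha))$ if and only if $a_0\in U(R)$, since the strictly-upper part is nilpotent; and (c) consequently $\Delta(\mathrm{T}_n(R,\alpha))=\{(a_0,\dots,a_{n-1}): a_0\in\Delta(R)\}$. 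Fact (c) is the analogue of Lemma \ref{3.3}, Lemma \ref{3.8} and Proposition \ref{3.10}(iii): one direction computes $I+AB$ and checks its $(0,0)$-coordinate is a unit by hypothesis on $a_0$ and (b), and the reverse direction specializes $B$ to units with arbitrary higher coordinates and extracts $1+a_0x_0\in U(R)$ for all $x_0\in U(R)$.

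With (a)--(c) in hand, the two implications are routine. For $(i)\Rightarrow(ii)$, given $A=(a_0,\dots,a_{n-1})\in\mathrm{T}_n(R,\alpha)$, use the $C\Delta$ property of $R$ to write $a_0=c+r$ with $c\in C(R)$, $r\in\Delta(R)$, and set $C=(c,0,\dots,0)$, $B=(r,a_1,\dots,a_{n-1})$; then $C$ is central by (a) and $B\in\Delta(\mathrm{T}_n(R,\alpha))$ by (c), and $A=C+B$. For $(ii)\Rightarrow(i)$, embed $a\in R$ diagonally as $A=(a,0,\dots,0)$, take its $C\Delta$-decomposition $A=C+B$ in $\mathrm{T}_n(R,\alpha)$, and compare $(0,0)$-coordinates: the coordinate of $C$ lies in $C(R)$ by (a) and the coordinate of $B$ lies in $\Delta(R)$ by (c), giving a $C\Delta$-decomposition of $a$ in $R$.

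The main obstacle is fact (a), the description of the centre of $\mathrm{T}_n(R,\alpha)$ when $\alpha$ is a genuine endomorphism rather than the identity: the twisted multiplication $c_i=\sum_k a_k\alpha^k(b_{i-k})$ means centrality imposes conditions intertwining $a_0$ with $\alpha$, and one must be careful that the specific matrices $C=(c,0,\dots,0)$ used in $(i)\Rightarrow(ii)$ really are central (here $c\in C(R)$ makes all higher coordinates of products agree since the extra terms vanish) and, more delicately, that in $(ii)\Rightarrow(i)$ the $(0,0)$-entry of an arbitrary central element of $\mathrm{T}_n(R,\alpha)$ genuinely lands in $C(R)$ — this follows by testing against diagonal elements $(r,0,\dots,0)$ for $r\in R$, exactly as in the proof of Theorem \ref{4.10}. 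Once the bookkeeping for the skew-multiplication is handled, everything else is a transcription of the earlier arguments, so I would present (a)--(c) as a short preliminary computation and then give the two implications in a few lines each.
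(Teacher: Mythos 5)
Your proposal is correct in substance, and in the direction $(i)\Rightarrow(ii)$ it is exactly the paper's argument: decompose the diagonal entry $a_0=c+r$ and split $A$ into the scalar matrix $(c,0,\dots,0)$ plus $(r,a_1,\dots,a_{n-1})$, the latter lying in $\Delta(\mathrm{T}_n(R,\alpha))$ (the paper quotes Leroy--Matczuk for this; your fact (c), proved by testing against units $(u,0,\dots,0)$, gives the same thing directly and is fine, as is your fact (b)). Where you genuinely diverge is $(ii)\Rightarrow(i)$: the paper never computes the centre or $\Delta$ of $\mathrm{T}_n(R,\alpha)$, but instead takes the ideal $I$ of elements with zero diagonal, observes $I^n=(0)$, $I\subseteq J(\mathrm{T}_n(R,\alpha))$ and $\mathrm{T}_n(R,\alpha)/I\cong R$, and invokes Proposition~\ref{4.3}(i). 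Your coordinate-comparison argument (test a central element against $(r,0,\dots,0)$ to see its $0$-th coordinate is central in $R$, and use (c) for the $\Delta$-part) reaches the same conclusion with more hands-on work; the paper's route is shorter because it recycles the general quotient lemma, while yours is self-contained and additionally produces the explicit description of $\Delta(\mathrm{T}_n(R,\alpha))$ in the spirit of Lemmas~\ref{3.3} and~\ref{3.8}.

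One caveat about your fact (a), which is the only place your plan is imprecise: the claim that $(c,0,\dots,0)$ with $c\in C(R)$ is central ``since the extra terms vanish'' does not survive a genuine endomorphism $\alpha$. From $c_i=\sum_k a_k\alpha^k(b_{i-k})$, the surviving higher coordinates of the two products are $c\,b_i$ and $b_i\,\alpha^i(c)$, so centrality of the scalar matrix forces $\alpha(c)=c$ in addition to $c\in C(R)$ (equivalently, in $R[x;\alpha]/\langle x^n\rangle$ one needs $cx=xc=\alpha(c)x$). Note that the paper's own proof asserts $C\in C(\mathrm{T}_n(R,\alpha))$ with exactly the same silence on this point, so this is a shared issue rather than a defect peculiar to your plan; but if you write the proof out you should either justify why the central summands used can be taken fixed by $\alpha$, or add a hypothesis to that effect (it is automatic for $\alpha=\mathrm{id}_R$, which is the case needed for Corollaries~\ref{3.19}--\ref{3.20}).
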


\begin{proof}
\((ii) \Rightarrow (i)\). Choose
	$$I:=\left\{
	\left.
	\begin{pmatrix}
		0 & a_{12} & \ldots & a_{1n} \\
		0 & 0 & \ldots & a_{2n} \\
		\vdots & \vdots & \ddots & \vdots \\
		0 & 0 & \ldots & 0
	\end{pmatrix} \right| a_{ij}\in R \quad (i\leq j )
	\right\}.$$
Therefore, one easily finds that $I^{n}=(0)$, $I\subseteq J({\rm T}_{n}(R,\alpha ))$, and $\dfrac{{\rm T}_{n}(R,\alpha )}{I} \cong R$. Resultantly, the implication follows from Proposition \ref{4.3}(i).\\

\((i) \Rightarrow (ii)\). Consider $A:=\begin{pmatrix}
	a_{0} & a_{1} & a_{2} & \cdots & a_{n-1} \\
	0 & a_{0} & a_{1} & \cdots & a_{n-2} \\
	0 & 0 & a_{0} & \cdots & a_{n-3} \\
	\ddots & \ddots & \ddots & \vdots & \ddots \\
	0 & 0 & 0 & \cdots & a_{0}
\end{pmatrix} \in{\rm T}_{n}(R,\alpha )$, where $a_{i}\in R$. Then, by hypothesis, $a_{0}=c_{0}+r_{0}$, where $c_{0}\in C(R)$ and $r_{0}\in \Delta(R)$. Therefore, we write $A=C+B$, where $C:=\begin{pmatrix}
c_{0} & 0 & 0 & \cdots & 0 \\
0 & c_{0} & 0 & \cdots & 0 \\
0 & 0 & c_{0} & \cdots & 0 \\
\ddots & \ddots & \ddots & \vdots & \ddots \\
0 & 0 & 0 & \cdots & c_{0}\end{pmatrix}\in C({\rm T}_{n}(R,\alpha ))$, and $B:=\begin{pmatrix}
r_{0} & a_{1} & a_{2} & \cdots & a_{n-1} \\
0 & r_{0} & a_{1} & \cdots & a_{n-2} \\
0 & 0 & r_{0} & \cdots & a_{n-3} \\
\ddots & \ddots & \ddots & \vdots & \ddots \\
0 & 0 & 0 & \cdots & r_{0}
\end{pmatrix} \in \Delta({\rm T}_{n}(R,\alpha ))$ looking at \cite [Corollary 9(1)]{2}. Hence, $T_{n}(R,\alpha )$ is a $C\Delta$ ring, as claimed.
\end{proof}

Besides, notice that there is a ring isomorphism $$\varphi : \dfrac{R[x;\alpha]}{\langle x^{n}\rangle }\rightarrow {\rm T}_{n}(R,\alpha),$$ defined as $$\varphi (a_{0}+a_{1}x+\ldots +a_{n-1}x^{n-1}+\langle x^{n} \rangle )=(a_{0},a_{1},\ldots ,a_{n-1})$$ with $a_{i}\in R$, $0\leq i\leq n-1$. Thus, one finds that ${\rm T}_{n}(R,\alpha )\cong \dfrac{R[x;\alpha ]}{\langle  x^{n}\rangle}$, where $\langle x^{n}\rangle$ is the ideal generated by $x^{n}$.

\medskip

We arrive in the sequel at a series of consequences.

\begin{corollary}\label{3.18}
Let $R$ be a ring with an endomorphism $\alpha$ such that $\alpha (1)=1$. Then, the following are equivalent:
\begin{enumerate}
\item
$R$ is a $C\Delta$ ring.
\item
$\dfrac{R[x;\alpha ]}{\langle x^{n}\rangle }$ is a $C\Delta$ ring.
\item
$\dfrac{R[[x;\alpha ]]}{\langle x^{n}\rangle }$ is a $C\Delta$ ring.
\end{enumerate}
\end{corollary}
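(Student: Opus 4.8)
The plan is to reduce all three conditions to the equivalence already recorded in Proposition~\ref{3.17}, by identifying each of the two quotient rings in (ii) and (iii) with the skew triangular matrix ring ${\rm T}_n(R,\alpha)$ up to ring isomorphism, and then invoking the fact that the $C\Delta$ property is preserved under ring isomorphisms.

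For $(i)\Leftrightarrow(ii)$, I would simply use the ring isomorphism $\varphi$ displayed immediately before the statement, which identifies $R[x;\alpha]/\langle x^n\rangle$ with ${\rm T}_n(R,\alpha)$; here the hypothesis $\alpha(1)=1$ is exactly what is needed to guarantee that both rings carry their expected identity element (since $f\cdot 1=\sum_i a_i\alpha^i(1)x^i$, and the matrix identity behaves correctly, only when $\alpha^i(1)=1$ for all $i$). Combining this isomorphism with Proposition~\ref{3.17} yields at once that $R$ is $C\Delta$ if, and only if, $R[x;\alpha]/\langle x^n\rangle$ is $C\Delta$. For $(ii)\Leftrightarrow(iii)$, the key observation is that the two-sided ideal $\langle x^n\rangle$ of $R[[x;\alpha]]$ equals $\{\sum_{i\ge n}a_ix^i : a_i\in R\}$, so that every coset in $R[[x;\alpha]]/\langle x^n\rangle$ has a unique representative $a_0+a_1x+\dots+a_{n-1}x^{n-1}$ with $a_i\in R$. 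The assignment sending such a coset to $(a_0,a_1,\dots,a_{n-1})$ is then a bijection onto ${\rm T}_n(R,\alpha)$; it obviously preserves addition, and it preserves multiplication because the product of two power series, read modulo $x^n$, involves only the coefficients in degrees $<n$ through the very formula $c_i=\sum_{j=0}^{i}a_j\alpha^j(b_{i-j})$ defining multiplication in ${\rm T}_n(R,\alpha)$. Hence $R[[x;\alpha]]/\langle x^n\rangle\cong {\rm T}_n(R,\alpha)\cong R[x;\alpha]/\langle x^n\rangle$, and a second application of Proposition~\ref{3.17} (or just transitivity of the isomorphisms established so far) closes the cycle.

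The only step requiring genuine care is the explicit description of $\langle x^n\rangle$ inside $R[[x;\alpha]]$ and the bookkeeping check that the truncated multiplication coincides on the nose with that of ${\rm T}_n(R,\alpha)$; once this is in place, everything is formal. As an alternative to the power-series isomorphism, one could instead repeat the argument of Proposition~\ref{3.17} verbatim, taking $I$ to be the ideal of strictly-upper-triangular type elements of $R[[x;\alpha]]/\langle x^n\rangle$, which satisfies $I^n=0$, is contained in the Jacobson radical, and has quotient $R$ — precisely the three facts that proof actually used. I do not expect any substantive obstacle.
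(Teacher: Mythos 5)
Your proposal is correct and matches the paper's (implicit) argument: the corollary is stated right after the displayed isomorphism ${\rm T}_n(R,\alpha)\cong R[x;\alpha]/\langle x^n\rangle$ precisely so that it follows from Proposition~\ref{3.17}, with the power-series case handled by the analogous truncation isomorphism, exactly as you describe. Your explicit verification that $\langle x^n\rangle\subseteq R[[x;\alpha]]$ consists of the series supported in degrees $\ge n$ is a welcome (and correct) detail the paper leaves unsaid.
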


\begin{corollary}\label{3.19}
Let $R$ be a ring. Then, the following are equivalent:
\begin{enumerate}
\item
$R$ is a $C\Delta$ ring.
\item
$\dfrac{R[x]}{\langle x^{n}\rangle }$ is a $C\Delta$ ring.
\item
$\dfrac{R[[x]]}{\langle x^{n}\rangle }$ is a $C\Delta$ ring.
\end{enumerate}
\end{corollary}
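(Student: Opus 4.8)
The plan is simply to read off the present corollary as the special case $\alpha = 1_R$ of Corollary~\ref{3.18}. The identity endomorphism of $R$ satisfies $\alpha(1) = 1$, so Corollary~\ref{3.18} applies; and by the definitions of the skew polynomial and skew formal power series rings one has $R[x; 1_R] = R[x]$ and $R[[x; 1_R]] = R[[x]]$, hence $R[x;1_R]/\langle x^n\rangle = R[x]/\langle x^n\rangle$ and $R[[x;1_R]]/\langle x^n\rangle = R[[x]]/\langle x^n\rangle$. Substituting $\alpha = 1_R$ into Corollary~\ref{3.18} therefore gives precisely the three equivalences (i), (ii) and (iii) stated here.

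If one prefers to trace the argument back to its roots rather than quote Corollary~\ref{3.18}, I would proceed as follows. First apply Proposition~\ref{3.17} with $\alpha = 1_R$: this shows that $R$ is $C\Delta$ if and only if the skew triangular matrix ring $T_n(R, 1_R)$ (a subring of $T_n(R)$) is $C\Delta$. Next, recall the ring isomorphism $T_n(R,\alpha) \cong R[x;\alpha]/\langle x^n\rangle$ recorded immediately after Proposition~\ref{3.17}; with $\alpha = 1_R$ this yields $T_n(R,1_R) \cong R[x]/\langle x^n\rangle$, so the equivalence (i)~$\Leftrightarrow$~(ii) follows, since being $C\Delta$ is invariant under ring isomorphism. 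Finally, for (ii)~$\Leftrightarrow$~(iii), observe that every element of $R[[x]]$ is congruent modulo $x^n$ to the polynomial obtained by truncating it at degree $n-1$, so the natural inclusion $R[x] \hookrightarrow R[[x]]$ induces a ring isomorphism $R[x]/\langle x^n\rangle \cong R[[x]]/\langle x^n\rangle$, and once more the $C\Delta$ property transfers across it.

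No genuine difficulty arises here: the statement is a corollary in the strict sense, all of whose ingredients --- Proposition~\ref{3.17}, the displayed isomorphism, and Corollary~\ref{3.18} --- are already in hand. The single point meriting a word is the routine fact that the $C\Delta$ property is preserved by ring isomorphisms; this holds because any such isomorphism sends $C(R)$ onto the center of the target and, by the characterization of $\Delta$ recalled in the Introduction as the largest Jacobson-radical subring closed under multiplication by units, sends $\Delta(R)$ onto the corresponding set in the target.
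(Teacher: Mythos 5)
Your proposal is correct and is essentially the paper's intended argument: Corollary~\ref{3.19} is stated there as the immediate specialization $\alpha = 1_R$ of Corollary~\ref{3.18}, which in turn rests on Proposition~\ref{3.17} and the isomorphism ${\rm T}_n(R,\alpha)\cong R[x;\alpha]/\langle x^n\rangle$, exactly as you trace it. Your extra remarks on the truncation isomorphism $R[x]/\langle x^n\rangle\cong R[[x]]/\langle x^n\rangle$ and on invariance of the $C\Delta$ property under ring isomorphism are the same routine facts the paper leaves implicit.
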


Consider $R$ to be a ring and $M$ to be a bi-module over $R$. Let $${\rm DT}(R,M) := \{ (a, m, b, n) | a, b \in R, m, n \in M \}$$ with addition defined componentwise and multiplication defined by $$(a_1, m_1, b_1, n_1)(a_2, m_2, b_2, n_2) = (a_1a_2, a_1m_2 + m_1a_2, a_1b_2 + b_1a_2, a_1n_2 + m_1b_2 + b_1m_2 +n_1a_2).$$ Then, ${\rm DT}(R,M)$ is a ring which is isomorphic to ${\rm T}({\rm T}(R, M), {\rm T}(R, M))$. Likewise, with this at hand, we deduce $${\rm DT}(R, M) =
\left\{\begin{pmatrix}
	a &m &b &n\\
	0 &a &0 &b\\
	0 &0 &a &m\\
	0 &0 &0 &a
\end{pmatrix} |  a,b \in R, m,n \in M\right\}.$$ We now have the following isomorphism of rings: $\dfrac{R[x, y]}{\langle x^2, y^2\rangle} \rightarrow {\rm DT}(R, R)$ given as $$a + bx + cy + dxy \mapsto
\begin{pmatrix}
	a &b &c &d\\
	0 &a &0 &c\\
	0 &0 &a &b\\
	0 &0 &0 &a
\end{pmatrix}.$$

\begin{corollary}
Let $R$ be a ring. Then, the following statements are equivalent:
\begin{enumerate}
\item
$R$ is a $C\Delta$ ring.
\item
${\rm DT}(R, R)$ is a $C\Delta$ ring.
\item
$\dfrac{R[x, y]}{\langle x^2, y^2\rangle}$ is a $C\Delta$ ring.
\end{enumerate}
\end{corollary}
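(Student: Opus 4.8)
The plan is to reduce everything to Corollary~\ref{3.16}, using the two ring isomorphisms already recorded just before the statement: \({\rm DT}(R,R) \cong {\rm T}({\rm T}(R,R),{\rm T}(R,R))\) and \(\frac{R[x,y]}{\langle x^2,y^2\rangle} \cong {\rm DT}(R,R)\). Since the \(C\Delta\) property is manifestly invariant under ring isomorphisms — because both \(C(-)\) and \(\Delta(-)\) are defined in purely ring-theoretic terms, the latter via the criterion ``\(a \in \Delta(R)\) iff \(1-ua \in U(R)\) for all \(u \in U(R)\)'' — the equivalence \((ii) \Leftrightarrow (iii)\) is immediate from the second isomorphism and the fact that a ring isomorphism transports a \(C\Delta\)-decomposition to a \(C\Delta\)-decomposition.

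For \((i) \Leftrightarrow (ii)\), I would first apply Corollary~\ref{3.16} to the ring \(S := {\rm T}(R,R)\): this gives that \({\rm T}(S,S)\) is \(C\Delta\) if, and only if, \(S\) is \(C\Delta\). This application is legitimate because the converse half of Proposition~\ref{3.15}, on which Corollary~\ref{3.16} rests, only requires that the central elements of the base ring commute with the module elements, which is automatic when the bi-module is the ring itself. Composing with \({\rm DT}(R,R) \cong {\rm T}(S,S)\), we obtain that \({\rm DT}(R,R)\) is \(C\Delta\) exactly when \({\rm T}(R,R)\) is \(C\Delta\). A second application of Corollary~\ref{3.16}, this time to \(R\) itself, shows that \({\rm T}(R,R)\) is \(C\Delta\) precisely when \(R\) is \(C\Delta\). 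Chaining these equivalences closes the cycle \((i) \Leftrightarrow (ii) \Leftrightarrow (iii)\).

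I do not anticipate a genuine obstacle here; the argument is a double iteration of the trivial-extension result together with transport of structure along the displayed isomorphisms. The only point warranting a line of care is confirming that Corollary~\ref{3.16} really does apply to the iterated extension \({\rm T}(S,S)\) with \(S = {\rm T}(R,R)\) — that is, that no additional commutativity hypothesis is needed — but, as noted, this holds for free since central elements commute with everything by definition. A secondary, equally routine, check is the isomorphism-invariance of \(C\Delta\), which follows because isomorphisms preserve both centrality and membership in \(\Delta\) via the unit-based characterization above.
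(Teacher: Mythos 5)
Your proposal is correct and follows exactly the route the paper intends: the corollary is stated without a separate proof precisely because it is the immediate consequence of the displayed isomorphisms ${\rm DT}(R,R)\cong {\rm T}({\rm T}(R,R),{\rm T}(R,R))$ and $R[x,y]/\langle x^2,y^2\rangle\cong {\rm DT}(R,R)$ together with a double application of Corollary~\ref{3.16}. Your two points of care (that the commutativity hypothesis in Proposition~\ref{3.15} is automatic when $M$ is the ring itself, and that the $C\Delta$ property transfers along ring isomorphisms) are exactly the right checks and both hold.
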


\begin{corollary}\label{3.20}
Let
$ R $
be a ring, and let
\begin{center}
		$S_{n}(R):=\left\lbrace (a_{ij})\in T_{n}(R)\, | \, a_{11}=a_{22}=\cdots=a_{nn}\right\rbrace.$
\end{center}
Then, the following are equivalent:
\begin{enumerate}
\item	
$R$ is a $C\Delta$ ring.
\item	
$S_{n}(R)$ is a $C\Delta$ ring.
\end{enumerate}
\end{corollary}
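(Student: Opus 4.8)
The plan is to follow verbatim the pattern of the proof of Proposition \ref{3.17}, noting that $S_n(R)$ is exactly the subring of $T_n(R)$ consisting of upper triangular matrices with constant main diagonal (it contains the skew triangular matrix ring ${\rm T}_n(R,1_R)$ as a subring, but allows arbitrary, not merely Toeplitz, entries above the diagonal). The whole argument rests on exhibiting the nilpotent ideal of strictly upper triangular matrices and reducing everything to the diagonal.

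For the implication (ii) $\Rightarrow$ (i), I would introduce
\[
I := \left\{ (a_{ij}) \in S_n(R) \ \middle| \ a_{11} = a_{22} = \cdots = a_{nn} = 0 \right\},
\]
the ideal of strictly upper triangular matrices in $S_n(R)$. One checks at once that $I$ is a two-sided ideal with $I^n = (0)$, hence $I \subseteq J(S_n(R))$, and that the diagonal projection $S_n(R) \to R$, $(a_{ij}) \mapsto a_{11}$, is a surjective ring homomorphism with kernel $I$, so $S_n(R)/I \cong R$. Then Proposition \ref{4.3}(i) immediately gives that $R$ is $C\Delta$.

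For (i) $\Rightarrow$ (ii), I first record the description
\[
\Delta(S_n(R)) = \left\{ (a_{ij}) \in S_n(R) \ \middle| \ a_{11} \in \Delta(R) \right\},
\]
which follows from the previous paragraph combined with Corollary \ref{2.9}: since $I \subseteq J(S_n(R)) \subseteq \Delta(S_n(R))$ and $S_n(R)/I \cong R$, we get $\Delta(S_n(R))/I \cong \Delta(S_n(R)/I) \cong \Delta(R)$, and pulling back along the diagonal projection yields the claimed form. (Alternatively, one verifies this directly as in Lemma \ref{3.8}, using that a matrix in $S_n(R)$ is a unit precisely when its common diagonal entry is a unit of $R$, so that $I_n + AB \in U(S_n(R))$ for every $B \in U(S_n(R))$ if and only if the diagonal entry of $A$ lies in $\Delta(R)$.) Now take $A = (a_{ij}) \in S_n(R)$ with common diagonal entry $a := a_{11}$, and use the hypothesis to write $a = c + r$ with $c \in C(R)$ and $r \in \Delta(R)$. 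Put $C := cI_n$ and $B := A - cI_n$. Then $C$ is a scalar matrix, hence central in $S_n(R)$, while $B \in S_n(R)$ has common diagonal entry $r \in \Delta(R)$, so $B \in \Delta(S_n(R))$ by the description just obtained. Thus $A = C + B$ is a $C\Delta$-decomposition, proving that $S_n(R)$ is $C\Delta$.

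The only point requiring more than bookkeeping is the identification of $\Delta(S_n(R))$, and even this is short once Corollary \ref{2.9} is invoked; I do not anticipate any real obstacle, since all needed ingredients — Proposition \ref{4.3}(i), Corollary \ref{2.9}, and the unit description for $S_n(R)$ — are already available.
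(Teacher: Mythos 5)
Your proof is correct and follows essentially the same route the paper intends for this corollary, namely the pattern of Proposition \ref{3.17}: the nilpotent ideal of strictly upper triangular matrices together with Proposition \ref{4.3}(i) for one direction, and a scalar-central plus $\Delta$ decomposition for the other. Your use of Corollary \ref{2.9} to identify $\Delta(S_n(R))$ is a harmless variant of the paper's appeal to the corresponding fact from Leroy--Matczuk, and both verifications you sketch are sound.
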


Further, Wang introduced in \cite{7} the matrix ring $S_{n,m}(R)$. Supposing $R$ is a ring, then the matrix ring $S_{n,m}(R)$ can be represented as

$$\left\{ \begin{pmatrix}
	a & b_1 & \cdots & b_{n-1} & c_{1n} & \cdots & c_{1 n+m-1}\\
	\vdots  & \ddots & \ddots & \vdots & \vdots & \ddots & \vdots \\
	0 & \cdots & a & b_1 & c_{n-1,n} & \cdots & c_{n-1,n+m-1} \\
	0 & \cdots & 0 & a & d_1 & \cdots & d_{m-1} \\
	\vdots  & \ddots & \ddots & \vdots & \vdots & \ddots & \vdots \\
	0 & \cdots & 0 & 0  & \cdots & a & d_1 \\
	0 & \cdots & 0 & 0  & \cdots & 0 & a
\end{pmatrix}\in T_{n+m-1}(R) : a, b_i, d_j,c_{i,j} \in R \right\}.$$

In this vein, let $T_{n,m}(R)$ be

$$\left\{ \left(\begin{array}{@{}c|c@{}}
	\begin{matrix}
		a & b_1 & b_2 & \cdots & b_{n-1} \\
		0 & a & b_1 & \cdots & b_{n-2} \\
		0 & 0 & a & \cdots & b_{n-3} \\
		\vdots & \vdots & \vdots & \ddots & \vdots \\
		0 & 0 & 0 & \cdots & a
	\end{matrix}
	& \bigzero \\
	\hline
	\bigzero &
	\begin{matrix}
		a & c_1 & c_2 & \cdots & c_{m-1} \\
		0 & a & c_1 & \cdots & c_{m-2} \\
		0 & 0 & a & \cdots & c_{m-3} \\
		\vdots & \vdots & \vdots & \ddots & \vdots \\
		0 & 0 & 0 & \cdots & a
	\end{matrix}
\end{array}\right)\in T_{n+m}(R) : a, b_i,c_j \in R \right\}, $$

\noindent and

$$U_{n}(R)=\left\{ \begin{pmatrix}
	a & b_1 & b_2 & b_3 & b_4 & \cdots & b_{n-1} \\
	0 & a & c_1 & c_2 & c_3 & \cdots & c_{n-2} \\
	0 & 0 & a & b_1 & b_2 & \cdots & b_{n-3} \\
	0 & 0 & 0 & a & c_1 & \cdots & c_{n-4} \\
	\vdots & \vdots & \vdots & \vdots &  &  & \vdots \\
	0 &0 & 0 & 0 & 0 & \cdots & a
\end{pmatrix}\in T_{n}(R) :  a, b_i, c_j \in R \right\}.$$

\begin{corollary}\label{3.21}
Let $R$ be a ring. Then, the following issues are equivalent:
\begin{enumerate}
\item
$R$ is a $C\Delta$ ring.
\item
$S_{n,m}(R)$ is a $C\Delta$ ring.
\item
$T_{n,m}(R)$ is a $C\Delta$ ring.
\item
$U_{n}(R)$ is a $C\Delta$ ring.
\end{enumerate}	
\end{corollary}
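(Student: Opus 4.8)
The plan is to handle all three rings in (ii)--(iv) simultaneously, since each of them is a subring of some upper triangular matrix ring $T_N(R)$ (with $N=n+m-1$, $N=n+m$ and $N=n$, respectively) consisting of matrices all of whose diagonal entries coincide with one and the same element $a\in R$, the remaining entries being filled from prescribed positions. Write $S$ for any one of these rings. The first step is the structural observation, exactly in the spirit of Proposition~\ref{3.17} and Corollary~\ref{3.20}: the set $I$ of matrices in $S$ whose common diagonal entry is $0$ (i.e.\ the strictly upper triangular members of $S$) is a two-sided ideal of $S$, it is nilpotent (each block being strictly triangular), hence $I\subseteq J(S)$, and the map sending a matrix of $S$ to its diagonal entry induces a ring isomorphism $S/I\cong R$.

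The implications (ii)$\Rightarrow$(i), (iii)$\Rightarrow$(i) and (iv)$\Rightarrow$(i) are then immediate: since $I\subseteq J(S)$, $S/I\cong R$, and $S$ is $C\Delta$ by hypothesis, Proposition~\ref{4.3}(i) yields at once that $R$ is $C\Delta$.

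For the converse implications (i)$\Rightarrow$(ii), (i)$\Rightarrow$(iii), (i)$\Rightarrow$(iv), take $A\in S$ with common diagonal entry $a$, and use that $R$ is $C\Delta$ to write $a=c+r$ with $c\in C(R)$ and $r\in\Delta(R)$. Then $A=cI_N+(A-cI_N)$, where $cI_N$ is a scalar matrix and so lies in $C(S)$, while $A-cI_N$ is the matrix obtained from $A$ by replacing $a$ with $r$ on the diagonal. It remains to check that $A-cI_N\in\Delta(S)$, and this follows from the fact that $\Delta(S)$ consists exactly of those matrices of $S$ whose common diagonal entry belongs to $\Delta(R)$ — a description that, for such constant-diagonal subrings of $T_N(R)$, can be read off from \cite[Corollary~9]{2} in the same way as in the proof of Proposition~\ref{3.17}, or verified directly by computing $I_N+AB$ for an arbitrary $B\in U(S)$ as in Lemmas~\ref{3.3} and \ref{3.8}. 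Hence $A=cI_N+(A-cI_N)$ is a $C\Delta$-decomposition of $A$ in $S$, so $S$ is $C\Delta$.

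The main obstacle is precisely this computation of $\Delta(S)$ in the forward direction for the three specific patterns: one must confirm that $U(S)$ is the set of matrices of $S$ whose diagonal entry is a unit of $R$ (which is where nilpotence of $I$ is used) and then deduce, via the characterisation of $\Delta$ through multiplication by units applied along the diagonal, that the off-diagonal positions — living in the nil ideal $I$ — play no role and that membership in $\Delta(S)$ is governed solely by the diagonal entry lying in $\Delta(R)$. Once this bookkeeping is carried out for Wang's ring $S_{n,m}(R)$, the block ring $T_{n,m}(R)$ and the alternating-pattern ring $U_n(R)$, the argument is word-for-word the one already used for $D_n(R)$, $V_n(R)$ and $S_n(R)$, so the remaining details may be left to the reader.
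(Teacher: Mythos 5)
Your proposal is correct and takes essentially the approach the paper intends for this corollary: each of $S_{n,m}(R)$, $T_{n,m}(R)$ and $U_{n}(R)$ is a constant-diagonal subring of an upper triangular matrix ring, so its strictly upper triangular members form a nilpotent ideal $I\subseteq J(S)$ with $S/I\cong R$, whence Proposition~\ref{4.3}(i) gives (ii), (iii), (iv) $\Rightarrow$ (i), and the converse is the scalar-plus-$\Delta$ decomposition $A=cI_N+(A-cI_N)$ exactly as in Proposition~\ref{3.17} and Corollary~\ref{3.20}. Your key identification of $\Delta(S)$ as the matrices of $S$ whose diagonal entry lies in $\Delta(R)$ is sound; it can also be obtained at once from Corollary~\ref{2.9}, since $I\subseteq J(S)\subseteq\Delta(S)$ and $\Delta(S)$ is closed under addition.
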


Letting now $R$ be an arbitrary ring, Danchev et al. introduced in \cite{8} the aforementioned rings as follows:
\begin{align*}
	&A_{n,m}(R) =R[x,y | x^n=xy=y^m=0], \\
	&B_{n,m}(R) =R\left\langle x,y | x^n=xy=y^m=0  \right\rangle, \\
	&C_{n}(R) =R \langle x,y | x^2=\underbrace{xyxyx...}_{\text{$n-1$ words}}=y^2=0 \rangle.
\end{align*}

\begin{lemma}\label{3.22}~(\cite[Lemma 5.1]{8})
Let $R$ be a ring and $m, n \in \mathbb{N}$. Then, the next three isomorphisms of rings are fulfilled:
\begin{enumerate}
\item	
$A_{n,m}(R) \cong T_{n,m}(R)$.
\item	
$B_{n,m}(R) \cong S_{n,m}(R)$.
\item	
$C_{n}(R) \cong U_{n}(R)$.
\end{enumerate}
\end{lemma}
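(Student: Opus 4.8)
The three asserted isomorphisms are precisely the content of \cite[Lemma 5.1]{8}, so one may simply quote that result; nonetheless, let me describe the shape of the argument. In each item the ring on the right is a subring of some upper-triangular matrix ring that is free of finite rank both as a left and as a right $R$-module, while the ring on the left is an $R$-algebra presented by two generators $x,y$ subject to monomial relations. The plan is, in each case, to define an $R$-algebra homomorphism from the presented ring to the matrix ring by declaring the images of $x$ and $y$ to be suitable nilpotent ``shift'' matrices, to verify that these matrices satisfy the defining relations (so that the map is well defined), and then to observe that the map carries the monomial $R$-basis of the presented ring bijectively onto a free $R$-basis of the matrix ring, hence is an isomorphism.

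For (i), the variables of $A_{n,m}(R)=R[x,y\mid x^{n}=xy=y^{m}=0]$ are central, so $\{1,x,\dots,x^{n-1},y,\dots,y^{m-1}\}$ is a free $R$-basis and the rank is $n+m-1$, matching the free parameters $a,b_{1},\dots,b_{n-1},c_{1},\dots,c_{m-1}$ of $T_{n,m}(R)\subseteq T_{n+m}(R)$. Send $r\in R$ to $rI_{n+m}$, send $x$ to the nilpotent shift matrix supported inside the upper $n\times n$ diagonal block, and send $y$ to the nilpotent shift matrix supported inside the lower $m\times m$ diagonal block. Then $x^{n}$ and $y^{m}$ map to $0$ because a single shift block is nilpotent of the relevant index, while $xy$ maps to $0$ because the two shift blocks are supported on disjoint diagonal blocks; so the map is a well-defined homomorphism, and it carries $x^{i}$ and $y^{j}$ onto the distinct super-diagonal Toeplitz directions spanning $T_{n,m}(R)$, hence is an isomorphism.

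Items (ii) and (iii) run along the same lines, the new ingredient being that $x$ and $y$ no longer commute. In $B_{n,m}(R)=R\langle x,y\mid x^{n}=xy=y^{m}=0\rangle$ the nonzero monomials are exactly the words $y^{j}x^{i}$ with $0\le i\le n-1$ and $0\le j\le m-1$, so the rank is $nm$, matching the free parameters of $S_{n,m}(R)$; one positions $x$ and $y$ as overlapping shift operators so that $xy$ falls outside the admissible pattern (whence $xy\mapsto 0$), while the mixed monomials $y^{j}x^{i}$ fill the arbitrary rectangular block $\big(c_{ij}\big)$ of $S_{n,m}(R)$ and the pure powers fill its two Toeplitz corners. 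In $C_{n}(R)=R\langle x,y\mid x^{2}=\underbrace{xyxy\cdots}_{n-1}=y^{2}=0\rangle$ the nonzero monomials are the strictly alternating words in $x$ and $y$; a staggered pair of shift matrices sends these onto the interleaved $b$- and $c$-bands visible in $U_{n}(R)$, and the vanishing of the alternating word of length $n-1$ corresponds to that word ``running off'' the top-right corner of the $n\times n$ frame.

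The genuinely delicate point is the bookkeeping required to confirm that, under the chosen assignments of $x$ and $y$, the associative product on the presented side agrees with matrix multiplication on the other side: one must see that every collapse of the form $y^{j}x^{i}\cdot y^{j'}x^{i'}=0$ forced by $xy=0$ is mirrored by the vanishing of the corresponding matrix product, and that the split between the Toeplitz bands and the free rectangular block of $S_{n,m}(R)$ (respectively the alternating bands of $U_{n}(R)$) is reproduced exactly. Since all of this is carried out in detail in \cite[Lemma 5.1]{8}, I would simply invoke that reference, at most recording one representative computation---for instance the image of $yx$ in $S_{n,m}(R)$---to orient the reader.
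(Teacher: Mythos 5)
Your proposal matches the paper exactly: the paper offers no proof of this lemma at all, simply attributing it to \cite[Lemma 5.1]{8}, which is precisely what you ultimately do. Your accompanying sketch (shift-matrix images of $x,y$, verification of the monomial relations, and matching of monomial $R$-bases with the free parameters of $T_{n,m}(R)$, $S_{n,m}(R)$, $U_n(R)$) is a reasonable outline of how that cited proof goes, so no gap arises from relying on the reference.
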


\begin{corollary}\label{3.23}
Let $R$ be a ring. Then, the following claims are equivalent:
\begin{enumerate}
\item
$R$ is a $C\Delta$ ring.
\item
$B_{n,m}(R)$ is a $C\Delta$ ring.
\item
$A_{n,m}(R)$ is a $C\Delta$ ring.
\item
$C_{n}(R)$ is a $C\Delta$ ring.
\end{enumerate}	
\end{corollary}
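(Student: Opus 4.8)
The plan is to reduce everything to the already-established equivalences in Corollary~\ref{3.21} by transporting the $C\Delta$ property across the ring isomorphisms recorded in Lemma~\ref{3.22}. The first thing to note is that being a $C\Delta$ ring is an isomorphism invariant: if $\psi\colon R\to S$ is a ring isomorphism, then $\psi(C(R))=C(S)$ because centrality is a purely ring-theoretic condition, and $\psi(\Delta(R))=\Delta(S)$ because, as recalled from \cite{2}, one has $a\in\Delta(R)$ if and only if $1-ua\in U(R)$ for every $u\in U(R)$, a condition manifestly preserved by $\psi$. Consequently a $C\Delta$-decomposition $a=c+r$ in $R$ maps to a $C\Delta$-decomposition $\psi(a)=\psi(c)+\psi(r)$ in $S$, and conversely; so $R$ is $C\Delta$ precisely when $S$ is $C\Delta$.

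With this remark in hand the corollary is immediate. By Lemma~\ref{3.22}(ii), $B_{n,m}(R)\cong S_{n,m}(R)$, so $B_{n,m}(R)$ is $C\Delta$ exactly when $S_{n,m}(R)$ is, which by the equivalence (i)$\Leftrightarrow$(ii) of Corollary~\ref{3.21} happens exactly when $R$ is $C\Delta$; this yields (i)$\Leftrightarrow$(ii). Likewise Lemma~\ref{3.22}(i) combined with (i)$\Leftrightarrow$(iii) of Corollary~\ref{3.21} gives (i)$\Leftrightarrow$(iii), and Lemma~\ref{3.22}(iii) combined with (i)$\Leftrightarrow$(iv) of Corollary~\ref{3.21} gives (i)$\Leftrightarrow$(iv). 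Chaining these three biconditionals through the common statement ``$R$ is $C\Delta$'' produces the full cycle of equivalences, so we are done.

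There is essentially no obstacle here; the only point deserving a sentence is the isomorphism-invariance of the $C\Delta$ property, and even that is routine once one recalls the intrinsic description of $\Delta(R)$. If one preferred a self-contained argument that does not cite Corollary~\ref{3.21} as a black box, one would instead replay, in the coordinates of $A_{n,m}(R)$, $B_{n,m}(R)$ and $C_n(R)$, the matrix-decomposition scheme used in Proposition~\ref{3.17}: split only the ``scalar'' coefficient $a$ as $a=c+r$ with $c\in C(R)$ and $r\in\Delta(R)$, absorb all higher-degree coefficients into the $\Delta$ of the big ring (which, by the relevant computation of $\Delta$, is exactly the part supported off the scalar diagonal), and verify that the diagonal-$c$ matrix is genuinely central. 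Routing the proof through Lemma~\ref{3.22} is cleaner, however, and is plainly the intended route.
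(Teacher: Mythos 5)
Your proposal is correct and follows exactly the intended route: the paper derives Corollary~\ref{3.23} precisely by combining the isomorphisms of Lemma~\ref{3.22} with the equivalences of Corollary~\ref{3.21}, the only (routine) point being that the $C\Delta$ property is preserved under ring isomorphism, which you justify via the intrinsic description of $\Delta(R)$ and of the center. Nothing is missing.
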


\section{Open questions}

We finish our considerations with two left problems as follows:

\begin{problem}\label{p1}
Let $R$ be a ring and $G$ a multiplicative group. Find a necessary and sufficient condition when the group ring $RG$ is $C\Delta$ only in terms of $R$, $G$ and their sections.	
\end{problem}

\begin{problem}\label{p2}
Characterize those rings $R$, naming them {\it feebly $\Delta$-clean} for which, for any $r\in R$, there exist $d\in \Delta(R)$ and $e, f\in Id(R)$ with $ef=fe=0$ such that $r=d+e-f$. In addition, if $d$ commutes with (either) $e$ or $f$, the feebly $\Delta$-clean ring $R$ having this property is said to be {\it strongly feebly $\Delta$-clean}. 	
\end{problem}

%\noindent{\bf Acknowledgement.} The authors express their sincere gratitude to the expert referee for the numerous competent suggestions made which led to a substantial improvement of the exposition.

\noindent{\bf Funding:} The work of the first-named author, P.V. Danchev, is partially supported by the Junta de Andaluc\'ia under Grant FQM 264. All other three authors are supported by Bonyad-Meli-Nokhbegan and receive funds from this foundation.

\vskip3.0pc

\end{document}